\DeclareMathOperator{\supp}{supp}
\DeclareMathOperator{\sq}{Sq}
\newcommand{\Prob}{\mathbb{P}}
\newcommand{\E}{\mathbb{E}}
\newcommand{\C}{\mathbb{C}}
\newcommand{\D}{\mathbb{D}}
\newcommand{\eps}{\varepsilon}
\newcommand{\R}{\mathbb{R}}
\newcommand{\N}{\mathbb{N}}
\def\R{\mathbb{R}}
\renewcommand{\d}{\, d }
\theoremstyle{plain}
\newtheorem{theorem}{Theorem}[section]
\newtheorem{lemma}[theorem]{Lemma}
\newtheorem{corollary}[theorem]{Corollary}
\newtheorem{proposition}[theorem]{Proposition}
\theoremstyle{definition}
\newtheorem{definition}[theorem]{Definition}
\newtheorem{assumption}[theorem]{Assumption}
\theoremstyle{remark}
\newtheorem{remark}[theorem]{Remark}
\newcommand{\rdplus}{\oplus}
\newcommand{\editr}{}
\newcommand{\edits}{}
\numberwithin{equation}{section}
\begin{document}
	
	\title[$R$-diagonal elements and random polynomials]{The fractional free convolution of $R$-diagonal elements and random polynomials under repeated differentiation}
	
	\author{Andrew Campbell}
	\address{Institute of Science and Technology Austria, Am Campus 1, 3400 Klosterneuburg, Austria}
	\email{andrew.campbell@ist.ac.at}
	%\thanks{A. Campbell partially supported by ERC Advanced Grant ”RMTBeyond” No. 101020331}
	
	\author{Sean O'Rourke}
	\address{Department of Mathematics\\ University of Colorado\\ Campus Box 395\\ Boulder, CO 80309-0395\\USA}
	\email{sean.d.orourke@colorado.edu}
	%\thanks{This material is based upon work supported by the National Science Foundation under Grant No. DMS-2143142. }
	
	\author{David Renfrew}
	\address{Department of Math and Statistics\\ Binghamton University (SUNY)\\ Binghamton, NY 3902-6000\\USA}
	\email{renfrew@math.binghamton.edu}

	\begin{abstract}
		We extend the free convolution of Brown measures of $R$-diagonal elements introduced by K\"{o}sters and Tikhomirov [Probab. Math. Statist. 38 (2018), no. 2, 359--384] to fractional powers.  We then show how this fractional free convolution arises naturally when studying the roots of random polynomials with independent coefficients under repeated differentiation.  When the proportion of derivatives to the degree approaches one, we establish central limit theorem-type behavior and discuss stable distributions.  
	\end{abstract}

	\maketitle 
	
	\tableofcontents

	\section{Introduction} \label{sec:intro}
	
	The definition of the free convolution $\mu \boxplus \nu$ of two compactly supported probability measures $\mu, \nu$ on the real line is due to Voiculescu \cite{MR839105}.  One can define $\mu \boxplus \nu$ to be the asymptotic limit of the empirical spectral measure of $A_n + B_n$ as $n \to \infty$, where $A_n$ and $B_n$ are independent $n \times n$ random Hermitian matrices, invariant under unitary conjugation, whose individual empirical spectral measures converge to $\mu$ and $\nu$, respectively.  Alternatively, one can define the free convolution using the $R$-transform (see \eqref{eq:RM}, below, for the definition).  Any compactly supported probability measure $\mu$ on the real line is uniquely defined by its $R$-transform $R_{\mu}(z)$ for sufficiently small values of the complex argument $z$.   Heuristically, the $R$-transform can be viewed as the free probability analogue of the cumulant generating function from classical probability theory.  
	
	In fact, the free convolution $\mu \boxplus \nu$ is the unique compactly supported probability measure on the real line whose $R$-transform $R_{\mu \boxplus \nu}$ satisfies 
	\[ {R}_{\mu \boxplus \nu}(z) = {R}_{\mu}(z) + {R}_{\nu}(z) \]
	for all sufficiently small values of $z$.  It follows that for an integer $k \geq 1$, $\mu^{\boxplus k}$, the free convolution of $\mu$ with itself $k$ times, can be characterized by the identity 
	\begin{equation} \label{eq:real:kdef}
		{R}_{\mu^{\boxplus k}}(z) = k {R}_{\mu}(z) 
	\end{equation} 
	for all sufficiently small $z$.  
	In fact, using \eqref{eq:real:kdef}, one can define the fractional free convolution $\mu^{\boxplus k}$ for any real $k \geq 1$.  This was first shown for $k$ sufficiently large by Bercovici and Voiculescu \cite{MR1355057} and then for all real $k \geq 1$ by Nica and Speicher \cite{MR1400060}.

	Amazingly, as the following theorem shows, the free convolution can also be characterized in terms of random polynomials and the roots of their derivatives \cite{MR4669280,doi:10.1080/10586458.2021.1980752,MR4586815}.  We define the {\bf empirical root distribution} of a polynomial $P$ of degree $d$ and roots $x_1, \ldots, x_d$ (counted with multiplicity) to be the probability measure 
	\[ \frac{1}{d} \sum_{i=1}^d \delta_{x_i}, \]
	where $\delta_x$ is the point mass at $x$. 
	
	\begin{theorem}[Hoskins--Kabluchko, Steinerberger, Arizmendi--Garza-Vargas--Perales] \label{thm:AGP}
		Let $\mu$ be a compactly supported probability measure on the real line, and let $P_n$ be the random polynomial 
		\[ P_n(x):= \prod_{i=1}^n (x - X_i), \]
		where $X_1, X_2, \ldots$ are independent and identically distributed (iid) random variables with distribution $\mu$. 
		For any fixed $t \in (0,1)$, the empirical root distribution of the $\lceil tn \rceil$-th derivative of $P_n((1-t)x)$ converges weakly almost surely to $\mu^{\boxplus 1/(1-t)}$ as $n \to \infty$.  
	\end{theorem}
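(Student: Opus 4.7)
The plan is to reduce the probabilistic statement to a deterministic one and then analyze it via finite free cumulants. Since the $X_i$ are i.i.d.\ with $\mu$ compactly supported in some $K\subset\mathbb{R}$, the empirical measure $\tfrac{1}{n}\sum_i \delta_{X_i}$ converges weakly to $\mu$ almost surely, and almost surely all $X_i$ lie in $K$; by the Gauss--Lucas theorem every root of every derivative $P_n^{(k)}$ also lies in $K$. Hence it suffices to prove the deterministic assertion that if $Q_n$ is any sequence of monic polynomials of degree $n$ whose roots lie in a fixed compact set and whose empirical root distribution converges weakly to $\mu$, then the empirical root distribution of $[Q_n((1-t)\cdot)]^{(\lceil tn\rceil)}$ converges weakly to $\mu^{\boxplus 1/(1-t)}$.

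I would then route through finite free cumulants. For a monic polynomial $p$ of degree $n$ with $p(x)=\sum_{j=0}^n (-1)^j e_j(p)\, x^{n-j}$, the finite free cumulants $\{\kappa_j^{(n)}(p)\}_{j\ge 1}$ are defined by a combinatorial inversion of the $e_j(p)$, designed so that (i) they linearize the finite free additive convolution of Marcus--Spielman--Srivastava, and (ii) if the empirical root distribution of $p$ converges weakly to $\nu$ with roots in a fixed compact set, then $\kappa_j^{(n)}(p)\to\kappa_j(\nu)$, the $j$-th free cumulant of $\nu$, for each fixed $j$. The identity $\tfrac{d}{dx}[x^{n-j}]=(n-j)x^{n-j-1}$ gives, by induction on $k$,
\[
e_j\bigl(p^{(k)}/\mathrm{lead}\bigr) \;=\; \frac{\binom{n-j}{k}}{\binom{n}{k}}\,e_j(p) \qquad (j\le n-k),
\]
and substituting this into the cumulant inversion yields the key differentiation identity
\[
\kappa_j^{(n-k)}\bigl(p^{(k)}\bigr) \;=\; \left(\frac{n-k}{n}\right)^{j-1}\kappa_j^{(n)}(p) \;+\; O\!\left(\tfrac{1}{n}\right),
\]
uniformly for $j$ in any fixed range, provided the roots remain uniformly bounded. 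Setting $k=\lceil tn\rceil$ and using tightness of the root distributions (automatic from the uniform compact support), any weak-limit point $\tilde\nu_t$ of the empirical root distributions of $Q_n^{(\lceil tn\rceil)}$ must satisfy $\kappa_j(\tilde\nu_t)=(1-t)^{j-1}\kappa_j(\mu)$. The substitution $x\mapsto(1-t)x$ in the theorem statement dilates by $1/(1-t)$ and hence multiplies the $j$-th cumulant by $(1-t)^{-j}$, so the limit $\nu_t$ in the theorem satisfies $\kappa_j(\nu_t)=\tfrac{1}{1-t}\kappa_j(\mu)$. These are precisely the free cumulants of $\mu^{\boxplus 1/(1-t)}$; compactly supported measures being uniquely determined by their free cumulants, $\nu_t=\mu^{\boxplus 1/(1-t)}$, and uniqueness of the subsequential limit promotes this to weak convergence.

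The principal obstacle is establishing the cumulant differentiation identity with enough uniformity to survive the passage $n\to\infty$. While the identity for $e_j$ is transparent, converting it into a clean statement about $\kappa_j^{(n)}$ requires careful combinatorial control of a M\"obius-type sum over set partitions of $\{1,\ldots,j\}$, and in particular showing that the lower-order contributions are genuinely $O(1/n)$ with constants controlled by the compact support of the roots. Once this identity is in hand, the remaining ingredients---tightness, identification of the limit via cumulants, and the passage from convergence of individual cumulants to weak convergence of probability measures---are standard compactness and uniqueness arguments.
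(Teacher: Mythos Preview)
The paper does not give its own proof of this theorem; it is quoted as background and the reader is referred to Section~3 of \cite{MR4586815} (Arizmendi--Garza-Vargas--Perales) for a short proof of a more general version. Your proposal via finite free cumulants is exactly the approach in that reference, so you are following the same route the paper itself points to. One minor remark: in \cite{MR4586815} the finite free cumulants are set up so that the differentiation identity is \emph{exact} rather than asymptotic---for suitably normalized $p^{(k)}$ of degree $n-k$ one has $\kappa_j^{(n-k)}=\kappa_j^{(n)}$ on the nose---which removes the need to control an $O(1/n)$ error term uniformly; this makes the ``principal obstacle'' you flag disappear entirely once the definitions are arranged correctly.
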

	In other words, for a random polynomial with \edits{iid} real roots, the fractional free convolution $\mu^{\boxplus 1/(1-t)}$ describes the roots of its derivatives.  	The factor of $1-t$ in $P_n((1-t)x)$ simply scales the roots by $(1-t)^{-1}$.  We refer the reader to Section 3 of \cite{MR4586815} for a short proof of a more general version of Theorem \ref{thm:AGP}.  
	
	One of the goals of this paper is to \editr{investigate similar results for a} class of random polynomials with complex roots. In particular, we extend the notion of the free convolution of Brown measures (defined in \eqref{eq:oplus definition}) that was introduced in \cite{MR3896715} to fractional powers and show how these fractional convolution powers can be used to describe a similar relationship as in Theorem \ref{thm:AGP} for random polynomials with independent coefficients.  For a rotationally invariant measure $\mu$ in the complex plane, our result requires acting on $\mu$ by a bijection $\sq$, where $(\sq\mu)(\mathbb{D}_r):=\mu(\mathbb{D}_{\sqrt{r}})$ for $r > 0$ and $\mathbb{D}_r:=\{z \in \mathbb{C}: |z|< r\}$. \editr{An example of how $\sq$ is used to connect the Brown measure with the roots of random polynomials is given in Theorem \ref{thm:A:Kacs free convolution}, below. In fact, this theorem is a special case of our main result, Theorem \ref{thm:A:connection general case}.} We discuss this example more in Section \ref{sec:Haar}.
	
	\begin{theorem}\label{thm:A:Kacs free convolution}
		Let \begin{equation} \label{eq:kacmodel}
			P_n(z):=\sum_{k=0}^{n}\xi_kz^k 
		\end{equation} be a random polynomial such that $\xi_0,\xi_1, \ldots$ are \editr{ iid} standard complex Gaussian random variables. Then the empirical root measure of $P_n$ is known to converge in probability to the uniform probability measure, $\mu$, on the unit circle. 	
		For any fixed $t \in (0,1)$, the empirical root distribution of the $\lceil tn \rceil$-th derivative of $P_n((1-t)^2x)$ converges weakly in probability to $\sq((\sq^{-1}\mu)^{\oplus 1/(1-t)})$ as $n \to \infty$, where the operation $\cdot^{\oplus\editr{1/(1-t)}}$ is a fractional power of a convolution $\oplus$ on rotationally invariant probability measures, defined in Section \ref{sec:fracBrown}.
	\end{theorem}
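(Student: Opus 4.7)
The statement is a special case of the general Theorem \ref{thm:A:connection general case}, so the plan is to outline the analytic mechanism driving that general result. The approach has three stages: (i) recognize the rescaled derivative as a random polynomial with independent Gaussian coefficients, (ii) invoke the classical theory of zeros of such polynomials to identify the limiting empirical root distribution from a deterministic variance profile, and (iii) match the resulting rotationally invariant limit with $\sq\!\bigl((\sq^{-1}\mu)^{\oplus 1/(1-t)}\bigr)$ using the transform characterization of $\oplus$ developed in Section \ref{sec:fracBrown}.

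For stage (i), with $m = \lceil tn \rceil$ and $N = n-m$, direct differentiation yields
\[
P_n^{(m)}\bigl((1-t)^2 x\bigr) = \sum_{k=0}^{N} \frac{(k+m)!}{k!}\,(1-t)^{2k}\,\xi_{k+m}\, x^k,
\]
a random polynomial of degree $N \sim (1-t)n$ with jointly independent centered complex Gaussian coefficients of variances $\bigl(\tfrac{(k+m)!}{k!}(1-t)^{2k}\bigr)^{2}$. For stage (ii), Stirling's formula with $k = sN$ produces an explicit limit $\phi(s)$ of the normalized log-variance profile on $[0,1]$; results in the tradition of Hammersley, Shepp--Vanderbei, and Kabluchko--Zaporozhets then show that, in probability, the empirical root distribution is asymptotically rotationally invariant with radial cumulative distribution function recovered from $\phi$ by Legendre duality.

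Stage (iii) is the crux. Because $\mu$ is the Haar measure on the unit circle, both $\sq\mu$ and $\sq^{-1}\mu$ equal $\mu$, so the task reduces to computing the radial CDF of $\mu^{\oplus 1/(1-t)}$ evaluated at $\sqrt{r}$ and matching it against the Legendre-transform expression from stage (ii). I would use the transform characterization of $\oplus$ from Section \ref{sec:fracBrown}: if $\sigma_\nu$ denotes the transform that linearizes $\oplus$, then $\sigma_{\nu^{\oplus\alpha}}(z) = \alpha\,\sigma_\nu(z)$, and for $\nu = \mu$ the transform is explicit. Inverting produces a closed form for the radial CDF of $\mu^{\oplus 1/(1-t)}$, which I would then verify coincides with the outcome of stage (ii).

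The main obstacle is the matching in stage (iii): bridging the analytic Laplace-asymptotic description of a specific random polynomial and the operator-algebraic definition of the fractional $\oplus$-power of a rotationally invariant Brown measure. In practice this identification is likely realized by reparametrizing $\oplus$ as a free multiplicative convolution on the squared-radius pushforwards, in which case Voiculescu's $S$-transform calculus applies and the match becomes an explicit one-variable computation. A secondary technical point is that the convergence here is in probability rather than almost surely, so standard strong-law arguments for random polynomials must be replaced by concentration estimates on $\tfrac{1}{n}\log|P_n^{(m)}|$ together with the usual tightness arguments for root-counting measures on $\C$.
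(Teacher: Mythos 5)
Your proposal is correct and follows essentially the same route as the paper: the polynomial side is handled by the Kabluchko--Zaporozhets/Feng--Yao exponential-profile (Legendre duality) machinery, the free side by the Haagerup--Larsen/$S$-transform description of the fractional $\oplus$-power (which, as you guess, is exactly the free-compression/$S$-transform calculus of \eqref{eq:Slambda} applied to the squared-radius pushforward), and the two are matched through radial CDFs --- the paper just performs this matching once, at the level of quantile functions via the differentiation-flow identity \eqref{eq:A:CDF identity}, and then reads off the Kac/Haar case, for which the explicit formulas \eqref{eq:A:compressed unitary CDF} and \eqref{eq:A:Kac der CDF} confirm your stage (iii) computation. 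Your concern about upgrading to convergence in probability is unnecessary, since Theorem \ref{thm:KZ} and Theorem \ref{thm:feng-yao} already deliver convergence in probability directly.
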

	Here it is worth noting $\sq^{-1}\mu=\mu$, however we chose to include $\sq^{-1}$ in the statement of Theorem \ref{thm:A:Kacs free convolution} to match the more general result, Theorem \ref{thm:A:connection general case}. It may be helpful to interpret the rescaling of $(1-t)^{-2}$ for the roots of the $\lceil tn \rceil$-th derivative of the polynomial as $(1-t)^{-1}(1-t)^{-1}$, where one factor of $(1-t)^{-1}$ is to account for the natural collapse of the roots under differentiation given by the Gauss--Lucas theorem and the other $(1-t)^{-1}$ factor is to match the diffusion under the convolution.
	
	The definition of $\oplus$ can be technical for those unfamiliar with free probability theory, so we illustrate the connection in Theorem \ref{thm:A:Kacs free convolution} to sums of random matrices, with an example first proved in \cite{MR3091727}. The analogous notion of the empirical root measure for an $n\times n$ random matrix $M$ is the \textbf{empirical spectral measure} $\mu_M$ given by \begin{equation*}
		\mu_{M}:=\frac{1}{n}\sum_{k=1}^n\delta_{\lambda_k(M)},
	\end{equation*} where $\lambda_1(M),\dots,\lambda_n(M) \in \mathbb{C}$ are the eigenvalues of $M$ (counted with algebraic multiplicity). \begin{proposition} [Basak--Dembo] \label{prop:sumsunitary}
		Fix an integer $k \geq 1$, and let $U_n^{(1)},\dots,U_n^{(k)}$ be independent  $n \times n$ Haar distributed unitary random matrices. Then the empirical spectral measure of $U_n^{(1)}+\cdots+U_n^{(k)}$ converges almost surely as $n \to \infty$ to $\mu^{\oplus k}$, where $\mu$ is the uniform probability measure on the unit circle. 
	\end{proposition}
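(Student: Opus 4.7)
The plan combines Voiculescu's asymptotic freeness theorem with Girko's Hermitization approach to convergence of empirical spectral measures to Brown measures.

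As a first step, I would identify the limit. By Voiculescu's asymptotic freeness theorem applied to independent Haar unitaries, the family $(U_n^{(1)}, \ldots, U_n^{(k)})$ converges almost surely in $*$-distribution (with respect to $\tfrac{1}{n}\tr$) to a $*$-free family $(u_1, \ldots, u_k)$ of Haar unitaries in a tracial $W^*$-probability space $(\mathcal{M}, \tau)$. Consequently, the $*$-moments of $S_n := U_n^{(1)} + \cdots + U_n^{(k)}$ converge almost surely to those of $s := u_1 + \cdots + u_k$. Each $u_j$ is $R$-diagonal with Brown measure the uniform probability measure $\mu$ on the unit circle, and a free sum of $R$-diagonal elements is again $R$-diagonal, so by the very definition of $\oplus$ from \cite{MR3896715} the Brown measure of $s$ equals $\mu^{\oplus k}$.

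Moment convergence, however, does not automatically upgrade to convergence of empirical spectral measures for non-normal matrices. I would therefore invoke Girko's Hermitization method, which reduces the problem to showing that for Lebesgue almost every $z \in \C$,
\[ \frac{1}{n} \sum_{i=1}^n \log \sigma_i(S_n - z I) \ \longrightarrow \ \int_0^\infty \log w \, d \nu_z(w), \]
where $\sigma_i(S_n - zI)$ denotes the singular values of $S_n - z I$ and $\nu_z$ is the distribution of $|s - z 1|$ under $\tau$. The $*$-moment convergence from the previous step implies that the empirical singular value distribution of $S_n - z I$ converges weakly to $\nu_z$, which controls the integrand away from the origin.

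The main obstacle is the logarithmic singularity at zero: one needs a quantitative lower bound of the form $\sigma_{\min}(S_n - z I) \geq n^{-C}$ with probability $1 - o(1)$, uniformly in $z$ over compact subsets of $\C$. For Haar unitary summands, such an estimate is obtainable from the bi-unitary invariance of $S_n$ together with concentration of measure on $\U(n)$, followed by a standard $\eps$-net argument promoting pointwise bounds in $z$ to a uniform one; this is essentially the route of \cite{MR3091727}. Once the small-singular-value tail is controlled, the logarithmic potential of the empirical spectral measure of $S_n$ converges to that of $\mu^{\oplus k}$, and unfolding via $-\tfrac{1}{2\pi}\lap$ yields the desired weak convergence.
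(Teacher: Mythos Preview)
The paper does not give its own proof of this proposition: it is stated as a known result attributed to Basak--Dembo \cite{MR3091727} and used only as an illustrative example of the convolution $\oplus$. Your outline is a correct high-level sketch of the Hermitization strategy, and indeed this is essentially the route taken in \cite{MR3091727}; the substantive work (which you correctly flag) is the polynomial lower bound on the least singular value of $S_n-zI$, and you are right to defer to that reference for the details.
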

	
	\editr{ The convolution power in Theorem \ref{thm:A:Kacs free convolution} does not need to be an integer, the statement is valid for any power larger than 1, whereas Proposition \ref{prop:sumsunitary} requires integer values. Non-integer powers can be constructed by considering limits of truncations of random matrices (see Sections  \ref{sec:FFCr} and \ref{sec:Frac conv for R}). To illustrate this point, we consider limits of the well studied truncations of Haar distributed unitary random matrices, first computed in \cite{MR1748745,MR2198697}
		\begin{proposition} [ \.Zyczkowski--Sommers, D\'{e}nes--R\'{e}ffy] \label{prop:trunsunitary}
			Fix a $\lambda \in (0,1)$ and let $m :=m_n$ be such that $\frac{m}{n} \to \lambda$ as $n\to\infty$. Let $U_n$ be an $n \times n$ Haar distributed unitary random matrix, and let $P_n^m$ be an $n \times n$ self-adjoint projection, onto an $m$ dimensional subspace. Then the empirical spectral measure of $\frac{n}{m} P_n^m U_n P_n^m$ converges almost surely as $n \to \infty$ to $\mu^{\oplus1/\lambda}$, where as in Theorem \ref{thm:A:Kacs free convolution}, $\mu$ is the uniform probability measure on the unit circle. 
		\end{proposition}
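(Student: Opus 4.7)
The plan is to interpret the truncation inside a tracial $W^*$-probability space and then match the resulting Brown measure to the definition of the fractional convolution power $\mu^{\oplus 1/\lambda}$ given in Section \ref{sec:fracBrown}. By standard asymptotic freeness of Haar-distributed unitaries from any deterministic sequence of matrices, the pair $(U_n, P_n^m)$ converges in $*$-moments to $(u, p)$ in a tracial $W^*$-probability space $(\mathcal{M}, \tau)$, where $u$ is a Haar unitary free from an orthogonal projection $p$ with $\tau(p) = \lambda$. Consequently, $\frac{n}{m} P_n^m U_n P_n^m$ converges in $*$-moments to the operator $a := \lambda^{-1} p u p$; restricted to the compression $p\mathcal{M}p$ (with the renormalized trace $\lambda^{-1}\tau(p\,\cdot\,p)$) the operator $a$ is $R$-diagonal, and when viewed in the ambient algebra its Brown measure carries an atom of mass $1-\lambda$ at the origin reflecting the kernel of $p$.

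To upgrade convergence of $*$-moments to weak convergence of the empirical spectral measures to the Brown measure of $a$, I would employ a standard Hermitization argument in the style of Girko and Bordenave--Caputo--Chafa\"{i}, which requires control of the least singular value of $zI - \frac{n}{m} P_n^m U_n P_n^m$ uniformly for $z$ in a dense subset of $\mathbb{C}$. For truncations of Haar unitaries the necessary quantitative bounds follow from the fact that the singular values form a Jacobi-type determinantal ensemble by \cite{MR1748745,MR2198697}; this integrable structure produces the tightness and the absence of escaping mass in the logarithmic potential that the Hermitization method demands.

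Finally, to identify the Brown measure of $a$ with $\mu^{\oplus 1/\lambda}$, I would appeal directly to the construction of fractional powers in Section \ref{sec:fracBrown}. Since the Brown measure of the Haar unitary $u$ is the uniform probability measure $\mu$ on the unit circle, and since the paper constructs $\mu^{\oplus 1/\lambda}$ precisely via the compression of an $R$-diagonal element with Brown measure $\mu$ by a free projection of trace $\lambda$, the identification amounts to checking that the rescaling $\lambda^{-1}$ and the atom at zero match the conventions of the definition. I expect the main obstacle to be the second step: the passage from $*$-moment convergence to convergence of empirical spectral measures, which is the usual difficulty for non-normal random matrix models, rather than the algebraic setup or the final identification, which should be essentially tautological once the definitions are unfolded.
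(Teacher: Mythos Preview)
The paper does not give its own proof of this proposition; it is stated as a known result attributed to \.{Z}yczkowski--Sommers and D\'{e}nes--R\'{e}ffy, and the only thing the paper adds is the \emph{identification} of the limiting density computed in those references with $\mu^{\oplus 1/\lambda}$. The cited works proceed by computing the joint eigenvalue density of a truncated Haar unitary explicitly (via the change-of-variables/Jacobi ensemble structure), not via asymptotic freeness and Hermitization. The identification with $\mu^{\oplus 1/\lambda}$ is then carried out in Section~\ref{sec:Haar}: equation~\eqref{eq:A:compressed unitary CDF} gives the radial CDF of $\pi_\lambda(u)$ from \cite{MR1784419}, and the paper checks that after the dilation by $\lambda^{-1}$ it coincides with the expression produced by Definition~\ref{def:fracconv} and Lemma~\ref{lem:Spi}.

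Your outline is a legitimate alternative route --- asymptotic freeness followed by Hermitization followed by identification --- and is closer in spirit to how one would prove such a statement without the explicit integrable structure. One point to tighten: be careful about which algebra you sit in. The operator $\lambda^{-1}pup$, viewed in the ambient $(\mathcal{M},\tau)$, has Brown measure with an atom of mass $1-\lambda$ at the origin, whereas $\mu^{\oplus 1/\lambda}$ has no atom at $0$ by Proposition~\ref{prop:oplus prop} (since $\mu(\{0\})=0$). The target is the Brown measure of $\lambda^{-1}\pi_\lambda(u)$ in the \emph{compressed} space $(\mathcal{M}_p,\tau_p)$, so the empirical spectral measure in the statement must be read as normalized by $m$ (equivalently, discard the trivial kernel of $P_n^m$). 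With that convention your identification step is indeed tautological from Definition~\ref{def:fracconv}, and your assessment that the Hermitization step is the real content is correct --- though in this particular model the original authors bypass it entirely via the explicit density.
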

	}
	
	Numerical simulations of Theorem \ref{thm:A:Kacs free convolution} and Proposition \ref{prop:sumsunitary} are given in Figure \ref{fig:radial}. \editr{See Section \ref{sec:Haar} for further details on $\mu^{\oplus k}$.}
	
	The paper is organized as follows.  In Sections \ref{sec:free-prob} and \ref{sec:rand-poly}, we will give some necessary background, known results, and notation concerning free probability theory and random polynomials, respectively. In Section \ref{sec:fracBrown}, we extend the notion of the free convolution of Brown measures (see Section \ref{sec:freeprob}) of $R$-diagonal elements (see Section \ref{sec:Rdiag}) that was introduced in \cite{MR3896715} to fractional powers.  Then in Section \ref{sec:connection}, we will describe how this fractional free convolution is related to roots of derivatives of random polynomials with independent coefficients. In Section \ref{sec:examples} we give several consequences of this theory and state some examples, giving particular attention to the distributions that are stable under $\oplus$ and their relationship to the roots of derivatives of random polynomials.  Finally, in Section \ref{sec:PDE}, we study the dynamics of repeated differentiation using partial differential equations (PDEs), and we relate the PDE limits to our main results.  The appendix contains some auxiliary results.  
	
	\begin{figure}
		\centering
		\begin{subfigure}{.5\textwidth}
			\centering
			\includegraphics[width=0.95\linewidth]{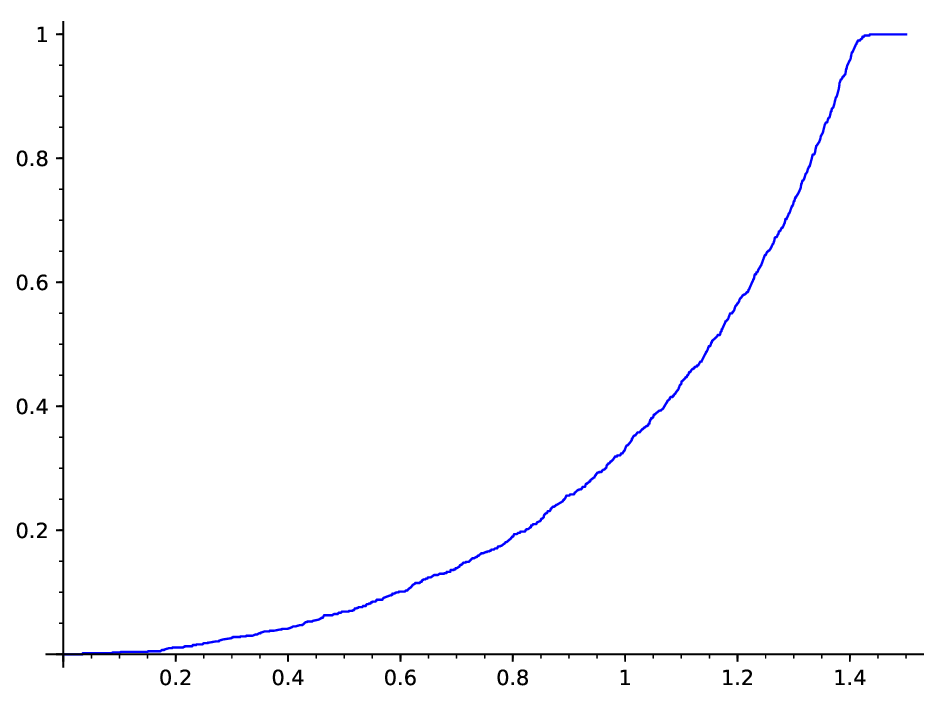}
		\end{subfigure}%
		\begin{subfigure}{.5\textwidth}
			\centering
			\includegraphics[width=0.95\linewidth]{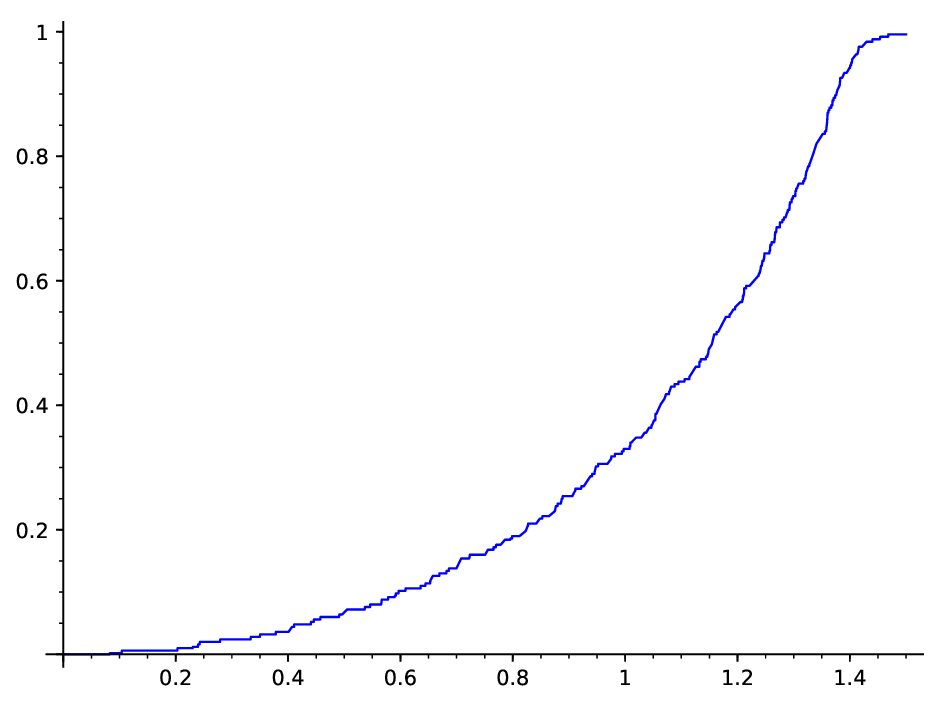} 
		\end{subfigure}
		\caption{Numerical simulations illustrating Theorem \ref{thm:A:Kacs free convolution} and Proposition \ref{prop:sumsunitary}.  The figure on the left shows the radial cumulative distribution function for the eigenvalues of the sum of two independent, Haar distributed unitary random matrices.  The figure on the right is constructed using the random polynomial $P_n$ given in \eqref{eq:kacmodel} when $n = 1000$.  The figure depicts the radial cumulative distribution function of the empirical root measure of the $n/2$-th derivative of $P_n$, after applying the push-forward map $\sq^{-1}$.  }
		\label{fig:radial}
	\end{figure}

	\section{Free \editr{probability theory background}} \label{sec:free-prob}
	
	The large $n$ limit of the empirical spectral measure of $n\times n$ random matrices can often be computed using free probability. In this section, we will introduce the necessary background; we refer the reader to the texts, surveys, and research articles cited in this section for additional details.  
	
	\subsection{Free probability theory background and notation}\label{sec:freeprob}
	
	We work on the non-commutative probability space $(\mathscr{M}, \tau)$, where $\mathscr{M}$ is a von Neumann algebra with normal faithful tracial state $\tau$. When working with unbounded elements we consider the von Neumann algebra that they are affiliated with, see Remarks \ref{rem:aff} and \ref{rem:aff2}, below. An element $u \in \mathscr{M}$ is called \textbf{Haar unitary} if $u^\ast u = u u^\ast = 1$ and $\tau(u^n) = 0$ for all $n \in \mathbb{N}$, where $\mathbb{N} = \{1, 2, 3, \ldots \}$ is the set of natural numbers. Here and in the future we use $1$ to denote the identity operator in $\mathscr{M}$. When $h$ is a self-adjoint element in $\mathscr{M}$, \editr{ the spectral measure, $\mu_h$, is} the unique compactly supported probability measure on $\mathbb{R}$ such that
	\[ \tau(h^n) = \int_{\mathbb{R}} t^n \d \mu_h(t), \qquad n \in \mathbb{N}. \]
	
	We now introduce the free probability transform that we will use to characterize measures. The \textbf{moment generating function} $M_{\mu}$ of a probability measure, $\mu$, on the real line is given by:
	\[M_{\mu}(z) :=  \int_{\mathbb{R}} \frac{zt}{1 - zt} d\mu(t) \]
	for $z \in \C \setminus \supp(\mu)$. Note that if $\mu$ is compactly supported, then for sufficiently small $z$ we have the power series expansion for \editr{$M_{\mu}(z)$}:
	\[\editr{M_{\mu}(z) =} \sum_{k=1}^\infty z^k \tau(h^k) =  \sum_{k=1}^\infty z^k \int_{\mathbb{R}} t^k \d \editr{ \mu}(t) .\]
	
	We then define the \textbf{$R$-transform}\footnote{We note that in the free probability literature, there are two different commonly-used $R$-transforms, which differ by a factor of $z$.} $R_\mu$ of $\mu$ to be the function that satisfies:
	\begin{equation} \label{eq:RM}  R_{\mu}(  z(1 + M_{\mu}(z)) ) =  M_{\mu}(z) ,\end{equation}
	for $z$ in a neighborhood of the origin with $z \neq 0$. In what follows we will identify the various transforms of measures with the corresponding transform of the operator for which the measure was generated, for example $ R_{h} := R_{\mu_h}$.
	
	If $\int_{\mathbb{R}} t \d \mu(t) \neq 0$, we can define the \textbf{$S$-transform}, $\mathscr{S}_\mu$, of $\mu$ as in \cite{MR2266879,MR1217253}, 
	by the identity: 
	\begin{equation} \label{eq:Stran} \mathscr{S}_\mu(z) := \frac{1}{z} R_{\mu}^{\langle -1 \rangle}(z) \end{equation}
	for $z$ in neighborhood of $0$. Here $( \cdot )^{\langle -1 \rangle}$ denotes inversion with respect to composition.
	
	\begin{remark} \label{rem:Stran}
		The primary utility of the $S$-transform is that it linearizes multiplication: if $a$ and $b$ are \editr{self-adjoint and} freely independent \editr{(see \eqref{eq:freeness}, below)} such that $\mu_a$ and $\mu_b$ are supported on $\R^+$, then $\mathscr{S}_{a b}(z) =\mathscr{S}_{a}(z) \mathscr{S}_b(z) $. 
		In what follows it is useful to note that the $S$-transform of the delta mass $\delta_c(x)$ is $\mathscr{S}_{\delta_c}(z) = c^{-1}$ and that the $S$-transform of $\mu_{aa^*}$ can be analytically continued to the open interval $(-1+\mu_a({0}),0)$ and maps this interval monotonically into $\R^+$ (see, for instance \cite{MR1784419}, Theorem 4.4).
		The $S$-transform can alternatively be defined as 
		\[ \mathscr{S}_\mu(z) = \frac{1+z}{z} M_{\mu}^{\langle -1 \rangle}(z), \]
		where the equivalence of these definitions is shown in \cite{MR2266879}, Remarks 16.18 and 18.16. 
	\end{remark}	
	
	When considering a family of (not necessarily self-adjoint) elements  $a_1, \ldots, a_s \in \mathscr{M}$, we consider their \textbf{joint $\ast$-distribution}, given by linear functionals from non-commutative polynomials, $Q(X_1,X_1^*, \ldots, X_s,X_s^*)$, in indeterminants $X_1,X_1^*, \ldots, X_s,X_s^*$ to $\C$: 
	\[ \tau( Q(a_1, a_1^*\ldots, a_s,a_s^*) ) .\]
	When $s=1$, we call this the \textbf{$\ast$-distribution} of $a_1$.

	As in the single variable case, the joint $\ast$-distribution is encoded in the multivariable $R$-transform, $R:=R_{a_1,\ldots,a_s}(z_1,\ldots,z_s),$ which we define to be the power series that satisfies the natural generalization of \eqref{eq:RM}: \[ M = R(z_1(1+M),\ldots,z_n(1+M)),\] where \[ M:=M_{a_1,\ldots,a_k}(z_1,\ldots,z_k) := \sum_{n=1}^\infty \sum_{i_1,\ldots,i_n=1}^k \tau(a_{i_1} \cdots a_{i_n})z_{i_1} \cdots z_{i_n}  ,\]
	is the multivariate moment generating function and $z_1,\ldots,z_k$ are non-commuting indeterminants. The coefficients of the $R$-transform are called the \textbf{free cumulants}, and the $n$-th free cumulant is denoted by $\kappa_n(a_{i_1},\ldots,a_{i_n})$: 
	\begin{equation} \label{eq:multRdef} R(z_1,\ldots,z_k) = \sum_{n=1}^\infty \sum_{i_1,\ldots,i_n=1}^k \kappa_n(a_{i_1},\cdots,a_{i_n})  z_{i_1} \ldots z_{i_n}. \end{equation}
	The free cumulants are multi-linear functions. 
	We refer the reader to \cite{MR2266879}, Section 16, where the free cumulants are instead first defined through a moment-cumulant relation and then Theorem 16.15 and Corollary 16.16 show that this is an equivalent definition. In particular, the free cumulants can be recovered from the joint moments and vice versa.

	We now use the $R$-transform to define free independence of non-commutative random variables. Once again we will give an analytic definition and refer the reader to Section 16  of \cite{MR2266879} for a combinatorial definition in terms of the free cumulants, in particular Theorem 16.6 and Remark 16.7, where the equivalence of the two definitions is shown.

	We say that a collection $a_1, \ldots, a_k$ of elements in $\mathscr{M}$ are \textbf{$\ast$-freely independent} if 
	\begin{equation} \label{eq:freeness} R_{a_1,a_1^* \ldots, a_k,a_k^*}(z_1,z_2 \ldots ,z_{2k-1},z_{2k})  =  R_{a_1,a_1^*}(z_1,z_2)+ \cdots + R_{a_k,a_k^*}(z_{2k-1},z_{2k}) ,
	\end{equation}
	\editr{as formal power series.} In particular, the mixed cumulants vanish. 
	
	\begin{remark} \label{rem:aff}
		When considering an unbounded element $a$, one must instead treat $a$ as an element affiliated to $W^*(a)$, the von Neumann algebra generated by the spectral projections of $|a|$. We then say unbounded elements are freely independent if all elements of their respective affiliated algebras are free. We refer the reader to \cite{MR2339369}, Section 3, for details. 
	\end{remark}
	
	\subsection{The fractional free convolution for self-adjoint operators}\label{sec:FFCr}

	Nica and Speicher \cite{MR1400060} give the fractional convolution powers from \eqref{eq:real:kdef} an additional free probability interpretation, for which we must first introduce additional background. Let $p \in \mathcal{M}$ be a self-adjoint projection \editr{(meaning that $p^2=p$)} with $\tau(p) = \lambda$ for some $\lambda \in (0,1]$, and then consider the new non-commutative probability space $(\mathcal{M}_p, \tau_p)$ given by\footnote{The brackets $[]$ are a formal symbol, which we introduce in order to distinguish $\mathscr{M}_p$ from $\mathscr{M}$.} :
	\[  \mathcal{M}_p := \{  [p a p]: a \in  \mathcal{M} \}  \]
	with 
	\[ \tau_p( [p a p]) = \lambda^{-1} \tau(p a p) \]
	for any $a \in  \mathcal{M}$. We then consider the map $\pi_{\lambda}: \mathcal{M} \to \mathcal{M}_p$ by $ \pi_{\lambda}(a) := [p a p]$, which we will call the {\bf free compression} of $a$. When $\lambda$ is fixed, we will omit it from the notation. Note that in $\mathcal{M}_p$, we have that $ \pi(a^*) = \pi(a)^*, \pi(a) + \pi(b) = \pi(a+b) ,$ and $ \pi(a)\pi(b)= \pi(apb)=\pi(papbp)$.
	\editr{\begin{remark}
			In Corollary 1.14 from \cite{MR1400060}, it is shown that if $a$ is a self-adjoint element in $ \mathcal{M}$ with law $\mu$, that is freely independent of $p$, with $\tau(p)=1/k$, for $k \in \N$, then $k \pi(a )$ has the law $\mu^{\boxplus k}$. In Definition \ref{def:fracconv}, we will give similar convolution semigroup for a class of measures on the complex plane, which from \cite{MR1400060} can also be related to the sum of freely independent elements or the free compression of a single element.
	\end{remark}}
	
	\subsection{The Brown measure}\label{sec:Brown}

	If $a \in \mathscr{M}$ is not \editr{self-adjoint}, then its distribution is not determined by its moments. Nevertheless, there is a distinguished measure associated to $a$, called its Brown measure, which we now introduce.
	We let $\Delta$ denote the Fuglede--Kadison determinant on $(\mathscr{M}, \tau)$ (see \cite{MR52696}), and let $L$ denote $\log \Delta$. It follows that, for $a \in \mathscr{M}$, 
	\[ L(a) = L(a^\ast a) / 2 = L(a^\ast) = \int_{\mathbb{R}} \log t \d \mu_{|a|}(t) \in [-\infty, \infty). \]
	The function $\lambda \mapsto \frac{1}{2 \pi} L(a - \lambda 1)$ is subharmonic on $\mathbb{C}$, and by the Riesz representation theorem can be identified with a regular probability measure, which is called the \textbf{Brown measure} for $a$ (see \cite{MR866489}) and is denoted as $\mu_a$.  The measure $\mu_a$ is defined as
	\[ \mu_a := \frac{1}{2 \pi} \nabla^2 L(a - \lambda 1)  \]
	where $\nabla^2$ denotes the Laplacian, interpreted in the distributional sense. 
	Note that the notation $\mu_a$ agrees with the previously introduced notation for \editr{self-adjoint} elements of $\mathscr{M}$. The Brown measure has a number of important properties  \cite{MR1784419}:
	\begin{itemize}
		\item $\mu_a$ is the unique compactly supported measure that fulfills 
		\[ L(a - \lambda 1) = \int_{\mathbb{C}} \log |z - \lambda| \d \mu_a(z) \]
		for Lebesgue almost all complex numbers $\lambda$.
		\item The support of $\mu_a$ is contained in the spectrum of $a$, and for any natural number $n$
		\[ \tau(a^n) = \int_{\mathbb{C}}z^{\editr{n}} \d \mu_a(z). \]
		\item For any arbitrary $a, b \in \mathscr{M}$, $\mu_{ab} = \mu_{ba}$. 
		\item The Brown measure for a Haar unitary element is the Haar measure on the unit circle.  
		\item The Brown measure of $a$ is determined by its $\ast$-distribution, but it is not continuous with respect to convergence of $\ast$-moments (see, for instance, \cite{MR3585560} Section 11). 
	\end{itemize}

	\subsection{$R$-diagonal elements}\label{sec:Rdiag}
	In general, the Brown measure is difficult to compute, but there is a class of elements, which we now introduce, for which the Brown measure can be computed.
	Let $a$ be an element of $\mathscr{M}$.  Then $a$ is said to be \textbf{$R$-diagonal} if $a$ \editr{has polar decomposition $a = uh$, where $u$ is a Haar unitary free from the radial part $h = |a|$. See Section \editr{15} of \cite{MR2266879} and Sections 2.3 and 2.4 of \cite{MR1876844} for further details about $R$-diagonal elements.}
	%has the same $\ast$-distribution as $ua$. See Section \editr{15} of \cite{MR2266879} and Section  for further details about $R$-diagonal elements.
	%
	Alternatively, an element $a$ is $R$-diagonal if all cumulants except the even cumulants which alternate between $a$ and $a^*$ vanish. We will call such cumulants the \textbf{diagonal terms} of the $R$-transform. In the tracial setting \editr{(meaning the $\tau(ab)=\tau(ba)$ for all $a,b \in \mathscr{M}$)}, the vanishing of the non-diagonal cumulants implies that the $R$-transform of $(a,a^*)$ is of the form \[R_{a,a^*}(z_1,z_2) = \sum_{n=1}^\infty \alpha_n (z_1 z_2)^n + \alpha_n(z_2 z_1)^n,\]  where \[\alpha_n := \kappa_{2n}(a,a^*,\ldots,a,a^*)=\kappa_{2n}(a^*,a,\ldots,a^*,a); \] 
	see \cite{MR2266879}, Example 16.9.

	\begin{remark}
		Classes of bi-unitarily invariant random matrices converge in $\ast$-distribution to $R$-diagonal elements. Although convergence in $\ast$-distribution is not strong enough to guarantee convergence of the empirical spectral measure, it was shown in \cite{MR2831116} that the empirical spectral measure does converge to the Brown measure of an $R$-diagonal element. 
	\end{remark}
	
	The following theorem, from \cite{MR1784419}, see also \cite{2204.01896}, shows that the Brown measure of $R$-diagonal elements can be explicitly computed.  
	
	\begin{theorem}[Haagerup--Larsen, Zhong]
		\label{thm:BM}
		Let $a$ be an $R$-diagonal element, and define 
		\begin{equation} \label{eq:lambda12}
			\lambda_1 := \left(\int_0^{\infty} x^{-2}d\mu_{|a|}(x)\right)^{-1/2}, ~~~  \lambda_2 := \left(\int_0^{\infty} x^2 d\mu_{|a|}(x)\right)^{1/2} ,
		\end{equation}
		with the convention that $\lambda_1 = 0$ if $\int_0^{\infty} x^{-2}d\mu_{|a|}(x) = \infty$.  
		Then the Brown measure of $a$ is radially symmetric and its radial cumulative distribution function (CDF) is given by 
		\begin{equation} \label{eq:Brownmeasure} F_a ( r ): =  \mu_a(\overline{ \D_r}) = \begin{cases} 0 &\text{ if } \editr{ r \in [0,\lambda_1)} \\
				1+\mathscr{S}_{a^*a}^{\langle -1 \rangle}(r^{-2})  &\text{ if } r \in [\lambda_1,\lambda_2)  \\
				1 &\text{ if } r \geq \lambda_2 \end{cases}, \end{equation} 
		where $\overline{ \D_r}$ is the closure of the open disk $\D_r = \{z \in \mathbb{C} : |z| < r \}$. 
	\end{theorem}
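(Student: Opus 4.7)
The plan is to exploit the $R$-diagonal structure of $a$ in three stages: (i) establish rotational symmetry of $\mu_a$, (ii) reduce the computation of $F_a$ to a one-dimensional problem for the logarithmic potential $\phi(s) := L(a - s \cdot 1)$, and (iii) identify $\phi'$ explicitly in terms of $\mathscr{S}_{a^*a}$. For (i), write the polar decomposition $a = uh$ with $u$ Haar unitary free from $h = |a|$; for any $\theta \in \R$, $e^{i\theta}u$ has the same $\ast$-distribution as $u$ and remains free from $h$, so $e^{i\theta}a = (e^{i\theta}u)h$ has the same $\ast$-distribution as $a$, forcing $\mu_a$ to be rotationally invariant. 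Consequently $\phi(\lambda)$ depends only on $s := |\lambda|$, and we abuse notation to write $\phi(s)$ for this common value.

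For (ii), by the mean-value identity $\int_0^{2\pi} \log|re^{i\theta}-s|\,\frac{d\theta}{2\pi} = \max(\log r,\log s)$ combined with the radial symmetry of $\mu_a$,
\[
\phi(s) = \int_0^\infty \max(\log s,\log r)\,dF_a(r) = F_a(s)\log s + \int_s^\infty \log r\,dF_a(r),
\]
and differentiating in $s$ yields the clean relation $F_a(s) = s\,\phi'(s)$ wherever $\phi$ is differentiable. This reduces the problem to computing $\phi(s) = \tfrac12\int \log t\, d\mu_{(a-s)^*(a-s)}(t)$ for $s > 0$ and extracting its derivative.

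For (iii), the central step is to express the spectral measure of $(a-s)^*(a-s)$ in terms of $\mu_{a^*a}$ and $s$. Writing $(a-s)^*(a-s) = h^2 - s(uh + hu^*) + s^2$ and exploiting freeness of $u$ and $h$, one can compute the Cauchy transform of $\mu_{(a-s)^*(a-s)}$ through a subordination argument, or equivalently through the Hermitized operator
\[
H(s) = \begin{pmatrix} 0 & a - s \\ a^* - s & 0\end{pmatrix},
\]
whose resolvent at $0$ encodes $\phi'(s)$. The shape of the target formula dictates the identity $s^{-2} = \mathscr{S}_{a^*a}(F_a(s) - 1)$, which upon inversion gives the claimed $F_a(s) = 1 + \mathscr{S}_{a^*a}^{\langle -1 \rangle}(s^{-2})$. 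The cutoffs $\lambda_1, \lambda_2$ correspond exactly to the endpoints of the interval on which $\mathscr{S}_{a^*a}^{\langle -1 \rangle}$ takes values in $(-1 + \mu_h(\{0\}), 0)$ as in Remark \ref{rem:Stran}; the extremal regimes $F_a \equiv 0$ on $[0,\lambda_1)$ and $F_a \equiv 1$ on $[\lambda_2,\infty)$ follow from harmonicity of $\phi$ outside this annulus, which itself is a consequence of invertibility of $a - \lambda$ in $\mathscr{M}$ (respectively, vanishing of the logarithmic potential's Laplacian) for $|\lambda|$ in those ranges. The main obstacle is justifying the $S$-transform identity rigorously: the cleanest route is to apply the multiplicative property $\mathscr{S}_{xy} = \mathscr{S}_x \mathscr{S}_y$ for free positive operators after rewriting $(a-s)(a-s)^*$ as a product in which one factor depends only on $h$ and the other only on $s$ and the Haar unitary $u$, thereby reducing the calculation to an explicit $S$-transform evaluation and a single functional inversion.
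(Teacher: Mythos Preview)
The paper does not supply its own proof of this theorem; it is quoted from Haagerup--Larsen \cite{MR1784419} and Zhong \cite{2204.01896}. So I can only assess your proposal on its own merits and against the original argument in \cite{MR1784419}.

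Your steps (i) and (ii) are correct and standard: rotational invariance follows from $e^{i\theta}a \stackrel{*}{=} a$, and the identity $F_a(s)=s\,\phi'(s)$ is the usual consequence of the Poisson--Jensen formula applied to the radially symmetric logarithmic potential.

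Step (iii), however, contains a genuine gap. You correctly isolate the target identity $s^{-2}=\mathscr{S}_{a^*a}(F_a(s)-1)$, but the route you propose to reach it does not work. The suggestion to ``rewrite $(a-s)(a-s)^*$ as a product in which one factor depends only on $h$ and the other only on $s$ and the Haar unitary $u$'' is not feasible: expanding gives $(a-s)(a-s)^*=uh^2u^*-s(uh+hu^*)+s^2$, and there is no way to factor this as a product of two free positive elements, so the multiplicativity of the $S$-transform cannot be applied directly. The vague references to ``subordination'' and to the Hermitization $H(s)$ do not remedy this; those tools produce an implicit equation for the Cauchy transform of $\mu_{|a-s|^2}$, not the explicit $S$-transform identity you need.

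What actually drives the Haagerup--Larsen argument is a different structural observation (their Proposition 4.2): if $\widetilde h$ is self-adjoint with \emph{symmetric} distribution satisfying $\widetilde h^{\,2}\stackrel{d}{=}h^2$, then $(a-\lambda)^*(a-\lambda)$ has the same distribution as $(\widetilde h+|\lambda|\cdot 1)^2$. This converts the two-variable free problem into a one-variable shift of a symmetric law, after which the logarithmic potential $\phi(s)=\int\log|t+s|\,d\mu_{\widetilde h}(t)$ can be differentiated and related to $\mathscr{S}_{a^*a}$ by an explicit computation. Without this reduction (or an equivalent device), your outline does not close.
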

	
	\begin{remark}
		Note that the Brown measure is always supported on a (possibly degenerate) ring, centered at the origin. Furthermore, by Remark \ref{rem:Stran}, $\mu_a$ has density when $a$ is not a Haar unitary and $0$ is in its support if $\mathscr{S}_{a^*a}(z)$ is \editr{unbounded}, with its singularity occurring at $z = \mu_a(\{0\})-1$.   				\end{remark}
	
	\begin{remark} \label{rem:aff2}
		If $a$ is unbounded, it is said to be R-diagonal if there exists a von Neumann algebra
		$\mathscr{N}$, with a faithful, normal, tracial state, and $\ast$-free elements $u$ and $h$ affiliated with $\mathscr{N}$, such that $u$ is Haar unitary, $h$ is positive, and $a$ has the same $\ast$-distribution as $u h$. Once again we refer the reader to \cite{MR2339369}, Section 3. The Brown measure of an unbounded operator might not be \editr{compactly} supported, but formula \eqref{eq:Brownmeasure} still holds.
	\end{remark}
	
	Products and sums of freely independent $R$-diagonal elements are also $R$-diagonal \cite{MR1784419}, making the Brown measure of sums of freely independent $R$-diagonal elements a natural object to consider. 				
	From \eqref{eq:Brownmeasure}, we see there is a bijection between the set of Brown measures of $R$-diagonal elements and measures on $\R^+$ such that 
	\[ \int_{\editr{\R^+}} \log^{+}|t| d \nu_{a} < \infty, \]
	given by the correspondence: $\mu_a \leftrightarrow \nu_{a}:=\mu_{a a^*}$.
	Furthermore, there is a bijection between symmetric probability measures on $\R$ and measures on $\R^+$, given by the mapping $x \to  x^2$. 
	We denote by $\widetilde \nu_a$, the inverse image of $\nu_a$ under this map. By composing these two bijections we get a bijection, $\mathcal{H}$, \editr{ mapping a class of symmetric probability measures on $\R$ to the Brown measures of $R$-diagonal elements}. In \cite{MR3896715}, building on the work of \cite{MR1784419}, K\"{o}sters and Tikhomirov show that if $a$ and $b$ are $\ast$-freely independent $R$-diagonal elements, then the Brown measure of $a + b$ is given by\begin{equation}\label{eq:oplus definition}
		\mu_{a+b} = \mu_{a} \rdplus \mu_{b} := \mathcal{H}( \mathcal{H}^{-1}(\mu_a) \boxplus  \mathcal{H}^{-1}(\mu_b) ),
	\end{equation} where $\boxplus$ is the additive free convolution discussed at the beginning of Section \ref{sec:intro}. Note that $\mathcal{H}( \widetilde \nu_a \boxplus  \widetilde \nu_b) = \mathcal{H}( \widetilde \nu_a ) \rdplus \mathcal{H}(  \widetilde \nu_b)$. 
	The convolution $\rdplus$ is used in \cite{MR3896715} to compute the limiting spectrum of a certain class of polynomials of random matrices with iid entries. The authors also characterize the Brown measures that are stable under the $\rdplus$ operation, see Proposition \ref{prop:stablelaw}, below\editr{.}
	
	The Brown measure of an $R$-diagonal element is called \textbf{$\alpha$-$\rdplus$ stable} if for any $m\in\N$
	\[ \mu^{\rdplus m} = \mathcal{D}_{m^{1/\alpha}}\mu, \]
	where, for $c \in (0, \infty)$, $\mathcal{D}_c$ is the scaling operator which maps a probability measure to the measure induced by the mapping $x \to cx$. We also note that in \cite{MR4396250, MR4345335, MR4386403}, a related convolution, denoted $\boxplus_{RD}$, which acts on measures on $\R^+$, was studied.

	\begin{proposition}[K{\"o}sters--Tikhomirov]\label{prop:stablelaw}
		The Brown measure of $a$ is $\alpha$-$\rdplus$ stable if and only if 
		
		\begin{equation} \label{eq:stableS}  \mathscr{S}_{a a^*}(z) = \theta \frac{ (-z)^{\frac{2}{\alpha} -1}}{1+z}\end{equation}
		for some $\theta>0$.
	\end{proposition}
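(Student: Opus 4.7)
My plan is to work through the bijection $\mathcal{H}$ (introduced just before the proposition) between Brown measures of $R$-diagonal elements and symmetric probability measures on $\mathbb{R}$, reducing the $\rdplus$-stability of $\mu_a$ to classical additive free stability of the symmetric measure $\widetilde{\nu}_a := \mathcal{H}^{-1}(\mu_a)$, and then converting the resulting $R$-transform identity into the desired form of $\mathscr{S}_{aa^*}$.

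First, I would observe that $\mathcal{H}$ intertwines the scaling operator $\mathcal{D}_c$: since $a \mapsto ca$ induces $aa^* \mapsto c^2 aa^*$, we have $\widetilde{\nu}_{ca} = \mathcal{D}_c \widetilde{\nu}_a$, and combined with the definition $\mu^{\rdplus m} = \mathcal{H}(\mathcal{H}^{-1}(\mu)^{\boxplus m})$ this yields
\[\mu_a^{\rdplus m} = \mathcal{D}_{m^{1/\alpha}}\mu_a \iff \widetilde{\nu}_a^{\boxplus m} = \mathcal{D}_{m^{1/\alpha}}\widetilde{\nu}_a.\]
So $\alpha$-$\rdplus$ stability of $\mu_a$ is equivalent to $\widetilde{\nu}_a$ being a symmetric free $\alpha$-stable law. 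In terms of $R$-transforms this reads $m R_{\widetilde{\nu}_a}(z) = R_{\widetilde{\nu}_a}(m^{1/\alpha} z)$, or inverting, $R_{\widetilde{\nu}_a}^{\langle -1\rangle}(mw) = m^{1/\alpha}R_{\widetilde{\nu}_a}^{\langle -1\rangle}(w)$. Passing through rationals and then to all positive reals by continuity (equivalently, invoking the Nica--Speicher fractional free convolution semigroup $\widetilde{\nu}_a^{\boxplus t}$, available since free stable laws are freely infinitely divisible), one promotes this to the full homogeneity $R_{\widetilde{\nu}_a}^{\langle -1\rangle}(tw) = t^{1/\alpha} R_{\widetilde{\nu}_a}^{\langle -1\rangle}(w)$ for all $t > 0$, forcing $R_{\widetilde{\nu}_a}^{\langle -1\rangle}(w) = \gamma w^{1/\alpha}$ for some constant $\gamma$ on an appropriate branch.

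Next, I translate this back to $\mathscr{S}_{aa^*}$. Symmetry of $\widetilde{\nu}_a$ makes its odd moments vanish, so $M_{\widetilde{\nu}_a}(z) = M_{\mu_{aa^*}}(z^2)$; inverting yields $M_{\mu_{aa^*}}^{\langle -1\rangle}(w) = (M_{\widetilde{\nu}_a}^{\langle -1\rangle}(w))^2$. Combining this with the alternative $S$-transform formula $\mathscr{S}_\mu(w) = \frac{1+w}{w} M_\mu^{\langle -1\rangle}(w)$ from Remark \ref{rem:Stran} and the standard identity $M_\mu^{\langle -1\rangle}(w) = R_\mu^{\langle -1\rangle}(w)/(1+w)$ gives
\[\mathscr{S}_{aa^*}(w) = \frac{(R_{\widetilde{\nu}_a}^{\langle -1\rangle}(w))^2}{w(1+w)} = \frac{\gamma^2\, w^{2/\alpha - 1}}{1+w}.\]
Invoking the positivity of $\mathscr{S}_{aa^*}$ on the real interval $(-1+\mu_a(\{0\}), 0)$ from Remark \ref{rem:Stran} fixes the branch, allowing me to rewrite $\gamma^2 w^{2/\alpha - 1} = \theta(-w)^{2/\alpha - 1}$ with $\theta > 0$, which is the claimed form. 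The converse runs the computation backwards: the assumed form of $\mathscr{S}_{aa^*}$ forces $R_{\widetilde{\nu}_a}^{\langle -1\rangle}(w) = \gamma w^{1/\alpha}$, hence $R_{\widetilde{\nu}_a}(z) \propto z^\alpha$, from which the stability scaling follows immediately and transports back through $\mathcal{H}$.

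The main obstacle is the branch analysis for non-integer $\alpha < 2$, where $R_{\widetilde{\nu}_a}$ is not analytic at the origin (symmetric free stable laws have heavy tails and their $R$-transforms are only defined on an appropriate sector of the upper half-plane). The homogeneity relation must then be established on the corresponding Riemann surface, and the identification $\gamma^2 w^{2/\alpha - 1} = \theta(-w)^{2/\alpha - 1}$ requires a careful choice of principal branches compatible with the real interval of Remark \ref{rem:Stran}. A secondary point is verifying that the formal $R$-transform $\gamma z^{\alpha}$ genuinely corresponds to a symmetric probability measure, for which the Bercovici--Voiculescu classification of free stable laws enters.
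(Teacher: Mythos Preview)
Your approach is correct and is essentially the original K\"osters--Tikhomirov route that the paper itself sketches in the paragraph preceding its proof: reduce $\rdplus$-stability of $\mu_a$ via the bijection $\mathcal{H}$ to $\boxplus$-stability of the symmetric measure $\widetilde{\nu}_a$ on $\mathbb{R}$, invoke the classification of symmetric free stable laws to get $R_{\widetilde{\nu}_a}^{\langle -1\rangle}(w)=\gamma w^{1/\alpha}$, and then use the identity $\mathscr{S}_{\widetilde{\nu}}^{\,2}(z)=\tfrac{1+z}{z}\mathscr{S}_{\nu}(z)$ (equivalently your formula $\mathscr{S}_{aa^*}(w)=(R_{\widetilde{\nu}_a}^{\langle -1\rangle}(w))^2/\big(w(1+w)\big)$) to land on \eqref{eq:stableS}. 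The obstacles you flag---branch issues for $\alpha<2$ and the need to quote Bercovici--Voiculescu/Bercovici--Pata for existence of the stable laws---are real and are exactly why the paper calls this route indirect.

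The paper's own proof is genuinely different and more self-contained. It never leaves the $S$-transform of $aa^*$: using the compression formula \eqref{eq:Spiaa}, namely $\mathscr{S}_{\lambda^{-2}\pi(a)\pi(a)^*}(z)=\tfrac{\lambda(1+\lambda z)}{1+z}\mathscr{S}_{aa^*}(\lambda z)$, stability becomes the functional equation $\mathscr{S}_{aa^*}(z)=c_\lambda\,\tfrac{\lambda(1+\lambda z)}{1+z}\mathscr{S}_{aa^*}(\lambda z)$ for all $\lambda\in(0,1]$, and one checks directly that the solutions are precisely $\theta(-z)^{2/\alpha-1}/(1+z)$. This bypasses $\mathcal{H}$, the symmetric stable classification, and the branch analysis entirely, and as a bonus the argument works for every real $\lambda$, not just $\lambda=1/k$. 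Your approach, on the other hand, makes the link to classical free stable laws more transparent.
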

	Here, and throughout this paper, we use the principal branch of the complex function $z^c$ \edits{with branch cut along the negative real axis}. We discuss this proposition further and give an alternative proof in Section \ref{sec:stable}.

	\section{Random \editr{polynomial theory background}}\label{sec:rand-poly}
	
	In this section, we review some results concerning zeros of random polynomials and their derivatives.  We focus on works which are closely related with the results in this paper.  
	
	Let $P_n$ be a (random) polynomial with complex coefficients of degree $n$ in a single complex \editr{variable}.  A natural question is to describe the distribution of the roots of $P_n^{(k)}$, the $k$-th derivative of $P_n$, in terms of the distribution of roots of $P_n$.  In general, the roots of $P_n$ and $P_n^{(k)}$ are related by the Gauss--Lucas theorem, which guarantees the zeros of $P_n^{(k)}$ lie in the convex hull of the roots of $P_n$.  However, the example $P_n(z) = z^n - 1$ shows that the roots of $P_n$ and $P_n^{(k)}$ need not have similar distributions, even when $k = 1$.  
	However, for many models of random polynomials, the roots of $P_n$ and $P_n^{(k)}$ are similar when $n$ tends to infinity and $k$ is fixed (or grows slowly with $n$) \cite{MR3567254,MR3283656,MR3318313,MR3698743,MR2970701,MR4136480,MR3896083,MR4474893,MR3363974,2212.11867,MR3340325,MR3342181,MR3689975}.  
	In this section, we describe some known results for the case when $k$ is proportional to the degree $n$.

	\subsection{Random polynomials with independent coefficients}

	Theorem \ref{thm:AGP} deals with polynomials with independent roots.  A different and more widely-studied model involves polynomials with independent coefficients. \editr{The limiting root measure for these polynomials was described in \cite{MR3262481}}.  Let 
	\begin{equation} \label{eq:pngen}
		P_n(z) := \sum_{k=0}^n \xi_k P_{k,n} z^k 
	\end{equation} 
	be a random polynomial with general coefficients, where $P_{k,n}$ are deterministic \editr{complex} coefficients and $\xi_k$ are non-degenerate iid complex-valued random variables.  It will be convenient to assume that
	\begin{equation} \label{eq:xik}
		\Prob(\xi_0 = 0) = 0 \qquad \text{and} \qquad \E \log(1 + |\xi_0|) < \infty. 
	\end{equation} 
	The coefficients $P_{k,n}$ are assumed to satisfy the following assumption.
	\begin{assumption} \label{assump:a1}
		There exists a function $P:[0, \infty) \to [0, \infty)$ so that 
		\begin{enumerate}[(1)]
			\item\label{assump:a1 p is non neg} $P(t) > 0$ for $t \in [0, 1)$ and $P(t) = 0$ for $t > 1$;
			\item\label{assump:a1 p is continuous} $P$ is continuous on $[0, 1)$ and left continuous at $1$;  and
			\item\label{assump:a1 coefficients converge to p} $\lim_{n \to \infty} \sup_{0 \leq k \leq n} \left| |P_{k,n}|^{1/n} - P( \frac{k}{n}) \right| = 0$. 
		\end{enumerate}
	\end{assumption}
	
	Let $P_n$ be the random polynomial from \eqref{eq:pngen}.  Heuristically, Assumption \ref{assump:a1} implies that the coefficients $P_{k,n}$ are roughly $e^{n \log P(k/n)}$ for some continuous function $P$.  In order to study the roots, we define the random measure 
	\[ \mu_n :=\frac{1}{n} \sum_{z \in \mathbb{C} : P_n(z) = 0} \delta_{z}, \]
	where $\delta_z$ is a Dirac point mass at $z$ and we agree the roots are counted with multiplicities.  
	Recall that for any $r > 0$, $\mathbb{D}_r = \{z \in \mathbb{C} : |z| < r\}$ is the open disk of radius $r$ centered at the origin.  
	In \cite{MR3262481}, Kabluchko and Zaporozhets establish several results describing the asymptotic behavior of the zeros of random analytic functions.  In the special case when the random analytic function is $P_n$, their results reduce to the following.  
	\begin{theorem}[Kabluchko--Zaporozhets  \cite{MR3262481}]\label{thm:KZ}
		Let $P_n$ be the random polynomial given in \eqref{eq:pngen}, where $P_{k,n}$ are deterministic coefficients satisfying Assumption \ref{assump:a1} for some function $P(t)$ and $\xi_0, \xi_1, \ldots$ are iid non-degenerate complex-valued random variables which satisfy $\E \log(1 + |\xi_0|) < \infty$.  Let $I: \mathbb{R} \to \mathbb{R} \cup \{+\infty\}$ be the Legendre-Fenchel transform of $u(t) = -\log P(t)$, where we use the convention that $\log 0 = - \infty$.  That is, 
		\[ I(s) := \sup_{t \geq 0} (st - u(t)) = \sup_{t \geq 0} (st + \log P(t)). \]
		Then $ \mu_n$ converges in probability to the \editr{deterministic, rotationally invariant  measure, $\mu$,  which is characterized by }
		\[ \mu(\mathbb{D}_r) := I'(\log r), \qquad r > 0. \]
		%				Then $ \mu_n$ converges in probability to some deterministic measure $\mu$.  The measure $\mu$ is rotationally invariant and is characterized by 
		%				\[ \mu(\mathbb{D}_r) := I'(\log r), \qquad r > 0. \]
	\end{theorem}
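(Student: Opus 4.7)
The plan is to prove convergence via logarithmic potential theory, exploiting the standard identity that for a polynomial $P_n$ of degree $n$,
\[ \mu_n = \frac{1}{2\pi n}\Delta \log|P_n| \]
in the sense of distributions, where $\Delta$ denotes the distributional Laplacian on $\mathbb{C}$. Setting $L_n(z):=\frac{1}{n}\log|P_n(z)|$, weak convergence in probability of $\mu_n$ to the deterministic candidate $\mu$ will follow from convergence in probability of $L_n$ to $L(z):=I(\log|z|)$ in $L^1_{\mathrm{loc}}(\mathbb{C})$, combined with the continuity of $\Delta\colon L^1_{\mathrm{loc}}\to\mathcal{D}'$. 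To verify the target formula, I would compute the Laplacian of the radial function $L$ in polar coordinates: for $f=I$ and $r>0$, a substitution $s=\log r$ yields
\[ \frac{1}{2\pi}\int_{\D_R}\Delta L\,dA = \int_{-\infty}^{\log R} f''(s)\,ds = I'(\log R), \]
using that $I'(-\infty)=0$ because $P(t)$ is bounded on $[0,1]$. This shows $L$ is the logarithmic potential of the putative limit $\mu$.

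For the pointwise upper bound $\limsup_n L_n(z)\leq L(z)$ almost surely, I would use the triangle inequality
\[ L_n(z) \leq \frac{\log(n+1)}{n} + \max_{0 \leq k \leq n}\left[\frac{\log|\xi_k|}{n} + \frac{\log|P_{k,n}|}{n} + \frac{k}{n}\log|z|\right]. \]
Assumption \ref{assump:a1} handles the middle two terms uniformly, converging them to $\log P(k/n)$. The moment condition $\E\log(1+|\xi_0|)<\infty$ combined with a Borel--Cantelli argument (applied to the iid sequence $\log(1+|\xi_k|)$) gives $\max_{k \leq n}\frac{\log(1+|\xi_k|)}{n}\to 0$ almost surely. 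Taking $\limsup$ and optimizing over $t=k/n\in[0,1]$ reproduces the Legendre--Fenchel transform $I(\log|z|)$.

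The matching lower bound is the main obstacle, because $P_n(z)$ could in principle exhibit massive cancellation. My plan is to establish an anti-concentration estimate of the form $\P(|P_n(z)| < e^{n(L(z)-\varepsilon)})\to 0$ for every fixed $z$ with $L(z)>-\infty$ and every $\varepsilon>0$. Let $k_n$ be a near-maximizer of $k\mapsto\log|P_{k,n}|+k\log|z|$, so that $\frac{1}{n}\log|P_{k_n,n}z^{k_n}|\to L(z)$. Write
\[ P_n(z) = \xi_{k_n} P_{k_n,n} z^{k_n} + R_n(z), \qquad R_n(z) := \sum_{k\ne k_n}\xi_k P_{k,n} z^k, \]
where $R_n$ is independent of $\xi_{k_n}$. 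The nondegeneracy hypothesis $\P(\xi_0=0)=0$ provides a uniform anti-concentration bound for $\xi_{k_n}$: there exist $\delta, c>0$ such that $\P(|\xi_{k_n}-a|<\delta)\leq 1-c$ for every $a\in\C$. Conditioning on $R_n$ and applying this to $a=-R_n(z)/(P_{k_n,n}z^{k_n})$ yields $\P(|P_n(z)|<\delta|P_{k_n,n}z^{k_n}|)\leq 1-c$; iterating a dyadic-type argument along multiple well-separated near-maximizers $k_n^{(1)},\ldots,k_n^{(m)}$ should drive this probability to zero at any polynomial scale $e^{-n\varepsilon}$. I expect this step to require the most care, particularly at exceptional radii $|z|$ where $I$ is non-differentiable or where the supremum defining $I$ is attained only at an endpoint.

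Combining the pointwise-in-probability convergence $L_n(z)\to L(z)$ with subharmonicity of $L_n$ and a locally uniform $L^1$ upper bound (obtained by integrating the upper bound above), I would invoke the standard fact that a locally bounded-above family of subharmonic functions converging in measure pointwise to a continuous limit converges in $L^1_{\mathrm{loc}}$. Applying $\frac{1}{2\pi}\Delta$ and using its continuity on $L^1_{\mathrm{loc}}$ then transfers this to weak convergence $\mu_n\to\mu$ in probability, yielding the rotationally invariant deterministic limit characterized by $\mu(\D_r)=I'(\log r)$.
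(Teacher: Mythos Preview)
The paper does not prove Theorem~\ref{thm:KZ}; it is quoted as a background result from Kabluchko--Zaporozhets \cite{MR3262481}, so there is no in-paper proof to compare against. Your sketch is in fact a faithful outline of the strategy in \cite{MR3262481}: reduce weak convergence of $\mu_n$ to $L^1_{\mathrm{loc}}$ convergence of the log-potentials $L_n(z)=\frac{1}{n}\log|P_n(z)|$ to $I(\log|z|)$, obtain the upper bound by Borel--Cantelli on $\max_k \frac{1}{n}\log(1+|\xi_k|)$, and the lower bound by an anti-concentration argument conditioned on all but one coefficient.

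Two small points. First, you write ``the nondegeneracy hypothesis $\P(\xi_0=0)=0$,'' but nondegeneracy in Theorem~\ref{thm:KZ} only means $\xi_0$ is not a.s.\ constant; the condition $\P(\xi_0=0)=0$ is the separate hypothesis \eqref{eq:xik} and is not assumed here. Fortunately the small-ball bound you actually use, namely $\sup_{a\in\C}\P(|\xi_0-a|<\delta)\le 1-c$ for some $\delta,c>0$, follows from nondegeneracy alone, so the argument survives once the wording is fixed. Second, the step you flag as ``requiring the most care'' --- iterating over several near-maximizers to push the failure probability below $e^{-n\varepsilon}$ --- is genuinely the crux, and your description is still only a plan; in \cite{MR3262481} the lower bound is obtained not by such an iteration but by a single-coefficient anti-concentration combined with an $L^1_{\mathrm{loc}}$ compactness argument for subharmonic functions (so one does not need to beat $e^{-n\varepsilon}$ pointwise, only to get convergence in probability at Lebesgue-a.e.\ $z$ and then upgrade via subharmonicity). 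If you intend to write out a full proof rather than cite, that is the route to take.
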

	Here, as a convention, $I'$ is the left derivative of $I$.  Since $I$ is convex, the left derivative exists everywhere.  
	
	In \cite{MR3262481}, Kabluchko and Zaporozhets also characterize a set of rotationally invariant measures on $\mathbb{C}$ that arise when one studies the asymptotic behavior of zeros of random analytic functions.  We will need a related class of rotationally invariant probability measures on $\mathbb{C}$.  To this end, we denote by $\mathcal{RP}(\C)$ the set of rotationally invariant probability measures on $\C$ and define the set 
	\[ \mathcal{RP}_p(\C):=\left\{\mu \in \mathcal{RP}(\C)\ :\ \int_{0}^1 \mu\left(\D_r \right)r^{-1}\d r <\infty \right\}. \] 
	We note that the upper bound of $1$ in the integral is not particularly important, and could be replaced by any positive constant for an equivalent definition. \edits{In particular, a measure $\mu \in \mathcal{RP}_p(\C)$ cannot have an atom at the origin.  Here, the subscript `$p$' refers to polynomials since $\mathcal{RP}_p(\C)$ represents the set of probability measures that can arise as the limit of empirical root measures of random polynomials with independent coefficients as explained in Remark \ref{rem:poly} below. }  
	\begin{remark} \label{rem:poly}
		Every measure $\mu\in\mathcal{RP}_p(\C)$ can arise as the limiting empirical root measure of a random polynomial with independent coefficients. This follows from the arguments given by Kabluchko and Zaporozhets in \cite{MR3262481}, Theorem 2.9.  Although Theorem 2.9 from \cite{MR3262481} is stated for random analytic functions, the proof can be specialized to random polynomials when $\mu$ is a probability measure; we now outline the argument. 
		Let $\mu\in\mathcal{RP}_p(\C)$, and define $I(s)=\int_{-\infty}^s\mu(\D_{e^r})dr$. Additionally define the  Legendre--Fenchel transform of $I$:\begin{equation*}
			u(t):=\sup_{s\in\R}(st-I(s)).
		\end{equation*}  Then the random polynomials $P_n(z) = \sum_{k=0}^n \xi_k P_{k,n} z^k$ with $P_{k,n}=e^{-nu(k/n)}$ satisfy Assumption \ref{assump:a1} with $P=e^{-u}$. This follows exactly as in \cite{MR3262481} with the observation that for any finite measure $\mu$ such that $I(s)<\infty$ for all $s\in\R$  one has \begin{equation*}
			\limsup_{t\rightarrow\infty}\frac{I(t)}{t}=\mu(\C),
		\end{equation*} and hence for a probability measure $\mu$, $u(t)=+\infty$ for $t>1$. Thus, $P(t)=0$ for any $t>1$. 
	\end{remark}
	
	Let $P_n$ be the random polynomial from \eqref{eq:pngen}.  We are interested in the $N_n$-th derivative $P_n^{(N_n)}$ of $P_n$, which will be of degree $D_n := n - N_n$. In order to study its zeros, we slightly abuse notation and define the random measure 
	\[ \mu_{D_n} := \frac{1}{D_n}\sum_{z \in \mathbb{C} : P_n^{(N_n)}(z) = 0} \delta_{z}, \]
	where $\delta_z$ is a Dirac point mass at $z$, and we again agree the roots are counted with multiplicities.

	Building on the work of Kabluchko and Zaporozhets \cite{MR3262481}, Feng and Yao \cite{MR3921311} establish the following result for the zeros of $P_n^{(N_n)}$.
	\begin{theorem}[Feng--Yao \cite{MR3921311}] \label{thm:feng-yao}
		Let $P_n$ be the random polynomial given in \eqref{eq:pngen}, where $P_{k,n}$ are deterministic coefficients satisfying Assumption \ref{assump:a1} for some function $P(t)$ and $\xi_0, \xi_1, \ldots$ are iid non-degenerate complex-valued random variables which satisfy \eqref{eq:xik}.  
		\begin{enumerate}[(1)]
			\item\label{thm:part 1:feng-yao} If $\lim_{n \to \infty} N_n/n = 0$, let $I: \mathbb{R} \to \mathbb{R} \cup \{+\infty\}$ be the Legendre-Fenchel transform of $u(x) = -\log P(x)$, then $ \mu_{D_n}$ converges in probability to a rotationally invariant measure $\mu$ in the complex plane given by 
			$ \mu(\mathbb{D}_r) := I'(\log r)$ for all $ r > 0. $
			In particular, $ \mu_{D_n}$ has the same limit as $ \mu_{n}$. 
			\item\label{thm:part 2:feng-yao} If $\lim_{n \to \infty} N_n/n = t \in (0, 1)$, let $ u_t(x) = -\log p(x+t) - (x+t) \log (x+t) + x \log x - (1-t)\log(1-t)$ if $0 \leq x \leq 1-t$ and $-\infty$ if $x > 1 - t$.  Let $I_t: \mathbb{R} \to \mathbb{R} \cup \{+\infty\}$ be the Legendre-Fenchel transform of $ u_t$, then $ \mu_{D_n}$ converges in probability to a rotationally invariant measure $\mu_t$ in the complex plane given by
			$ \mu^{t}(\mathbb{D}_r) := \frac{1}{1-t} I'_t(\log r)$ for all $ r > 0. $
		\end{enumerate}
	\end{theorem}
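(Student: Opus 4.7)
My plan is to reduce both parts of Theorem \ref{thm:feng-yao} to the Kabluchko--Zaporozhets result (Theorem \ref{thm:KZ}) by observing that $P_n^{(N_n)}$ is itself a random polynomial of degree $D_n$ with independent coefficients. Expanding the derivative term-wise,
\begin{equation*}
P_n^{(N_n)}(z) = \sum_{j=0}^{D_n} \tilde\xi_j \, Q_{j,D_n} \, z^j, \qquad \tilde\xi_j := \xi_{j+N_n}, \quad Q_{j,D_n} := P_{j+N_n,n} \, \frac{(j+N_n)!}{j!}.
\end{equation*}
The $\tilde\xi_j$ remain iid and inherit \eqref{eq:xik}. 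Since multiplying a polynomial by a nonzero constant does not change its zero set, I am free to replace $Q_{j,D_n}$ by $Q_{j,D_n}/C_n$ for any $C_n > 0$. The goal is to select $C_n$ so that these normalized coefficients satisfy Assumption \ref{assump:a1} (relative to degree $D_n$) for some continuous function $\tilde P$; then Theorem \ref{thm:KZ} yields $\mu_{D_n} \to \mu^t$ in probability with $\mu^t(\mathbb{D}_r) = \tilde I'(\log r)$, where $\tilde I$ is the Legendre--Fenchel transform of $\tilde u := -\log \tilde P$.

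For $j = sD_n$ with $s \in (0,1]$, Assumption \ref{assump:a1} yields $\tfrac{1}{n}\log|P_{j+N_n,n}| \to \log P(s(1-t)+t)$, and Stirling's formula gives, after simplification,
\begin{equation*}
\frac{1}{D_n}\log\frac{(j+N_n)!}{j!} = \frac{t\log n}{1-t} + \frac{(s(1-t)+t)\log(s(1-t)+t) - s(1-t)\log(s(1-t)) - t}{1-t} + o(1),
\end{equation*}
uniformly on compact subsets of $(0,1]$. The divergent factor $\tfrac{t}{1-t}\log n$ is independent of $s$, so it is eliminated by choosing $C_n$ with $\log C_n = tn\log n$ (equivalently $C_n^{1/D_n} = n^{t/(1-t)}$). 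The remaining finite expression defines $\tilde u(s)$, continuous on $[0,1)$ and extended to $+\infty$ on $(1,\infty)$ since $P$ vanishes there, so Assumption \ref{assump:a1} is verified for the normalized coefficients.

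To match the statement, substitute $x = s(1-t) \in [0, 1-t]$: a direct calculation gives $u_t(x) = (1-t)\tilde u(x/(1-t)) - (1-t)\log(1-t) - t$, and consequently the Legendre--Fenchel transforms satisfy $I_t(\xi) = (1-t)\tilde I(\xi) + (1-t)\log(1-t) + t$, yielding $I'_t(\xi) = (1-t)\tilde I'(\xi)$. Hence $\tfrac{1}{1-t}I'_t(\log r) = \tilde I'(\log r) = \mu^t(\mathbb{D}_r)$, which is Part (2). Part (1) is the same analysis with $t = 0$: writing $\epsilon := N_n/D_n \to 0$, the Stirling correction becomes $\epsilon\log D_n + \epsilon\log s + O(\epsilon^2/s) + o(1)$; the $s$-independent piece $\epsilon\log D_n$ is absorbed by taking $C_n = D_n^{N_n}$, while the residual $\epsilon\log s$ vanishes uniformly on compact $s$-subsets, so $\tilde P = P$ and the limit coincides with that of $\mu_n$.

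The principal difficulty lies in establishing Assumption \ref{assump:a1}\eqref{assump:a1 coefficients converge to p} \emph{uniformly} in $s \in [0,1]$, particularly near $s = 0$, where $\log s$ diverges in the Stirling expansion and the pointwise limit formula breaks down. This is handled not pointwise but at the level of logarithmic potentials: one must show $L^1_{\mathrm{loc}}$ convergence of the subharmonic functions $z \mapsto \tfrac{1}{D_n}\log|C_n^{-1}P_n^{(N_n)}(z)|$ to the envelope $\tilde I(\log|z|)$, following the Kabluchko--Zaporozhets framework, so that small-index coefficients contribute negligibly. For Part (1) this is particularly delicate, since the weaker hypothesis $N_n = o(n)$ (rather than $N_n = o(n/\log n)$ that a naive Stirling bound would demand) requires exploiting that $P = 0$ on $(1,\infty)$ to discard the high-index coefficients whose factorial weights would otherwise dominate.
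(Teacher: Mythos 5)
This theorem is imported from Feng--Yao and the paper contains no proof of its own beyond the remark that the coefficients of $P_n^{(N_n)}$ satisfy Assumption \ref{assump:a1} with a new exponential profile, and your reduction to Theorem \ref{thm:KZ} via the term-wise Stirling computation, an $s$-independent renormalization, and the bookkeeping relating $\tilde u$, $u_t$ and their Legendre--Fenchel transforms is exactly that argument, carried out correctly. One small repair: choose $\log C_n = N_n\log n$ (or $C_n = n!/D_n!$) rather than $tn\log n$, since $N_n - tn$ is only $o(n)$ and the discrepancy $(N_n-tn)\log n/D_n$ need not vanish; with that choice the uniform convergence in Assumption \ref{assump:a1}\eqref{assump:a1 coefficients converge to p} --- including near $j=0$ and in part (1) under the mere hypothesis $N_n=o(n)$ --- follows directly from the uniform $O(\log n)$ Stirling error and uniform continuity of $x\mapsto x\log x$ on $[0,2]$, so the delicacy you propose to handle through the Kabluchko--Zaporozhets potential-theoretic machinery does not actually arise and Theorem \ref{thm:KZ} can be applied as a black box.
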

	
	Here, as a convention, $I'$ and $I_t'$ are the left derivatives of $I$ and $I_t$, respectively.  \edits{The main idea behind Theorem \ref{thm:feng-yao} is that if the coefficients of $P_n$ satisfy Assumption \ref{assump:a1}, then the coefficients of the $N_n$-th derivative of $P_n$ satisfy essentially the same assumption with a (possibly) different exponential profile.  }
	
	In \cite{MR3921311}, Feng and Yao also consider certain special cases, such as the Kac and elliptic models, where they compute the limiting behavior of the zeros when $\lim_{n \to \infty} N_n / n = 1$.  We will discuss these cases and some generalizations in Sections \ref{sec:limit_roots} and \ref{sec:limit of elliptic polynomials}. \editr{ In particular, Proposition \ref{prop:A:limit with n for elliptic} gives a partial answer to questions posed by Feng and Yao on the limiting root distributions when $\lim_{n \to \infty} N_n / n = 1$, illustrating the importance of the tail of the original root measure to the potential limits.}

	\subsection{PDEs describing the behavior of roots under repeated differentiation}
	
	Another approach to studying the distribution of zeros of $P_n$ (or its large $n$ limit) and its $\lceil tn \rceil$-th derivative for some $0 < t < 1$ is to relate them by a partial differential equation (PDE); in this case, we will often think of $t$ as time, with $t = 0$ corresponding to the empirical distribution of roots of $P_n$ (or its large $n$ limit).  
	
	Suppose $P_n$ is a polynomial of degree $n$ having all its roots on the real line with density $f(0, x)$.  In \cite{MR4011508}, Steinerberger introduced the following PDE for the density $f(t,x)$ of the zeros of $P_n^{(\lceil tn \rceil)}$:
	\begin{equation} \label{eq:PDEst} f_t + \frac{1}{\pi} \left( \arctan \left( \frac{ Hf}{f} \right) \right)_x = 0, \end{equation}
	where the equation holds on the support $\supp f$ and $Hf$ is the Hilbert transform of $f$.  
	
	A similar result has been introduced when the roots of $P_n$ are rotationally invariant in the complex plane.  Indeed, given the initial radial density $\psi(x,0)$ of the zeros at $t = 0$, the PDE from \cite{MR4242313} describes the radial density $\psi(x, t)$ at time $0 \leq t < 1$.  The equation is
	\begin{equation} \label{eq:OS}
		\frac{ \partial \psi(x,t) }{\partial t} = \frac{ \partial}{\partial x} \left( \frac{ \psi(x,t) }{ \frac{1}{x} \int_0^x \psi(y,t) dy } \right) \qquad x \geq 0, \quad 0 \leq t < 1. 
	\end{equation}
	Here, we use the convention that $x \geq 0$ either denotes $x \in [0, C]$ (for some finite positive constant $C$) or $x \in [0, \infty)$, depending on whether the density is compactly supported or not. In the former case, by rescaling, we will often assume without loss of generality that $C = 1$.  
	
	In \cite{doi:10.1080/10586458.2021.1980752}, Hoskins and Kabluchko relate the radial (part of the) distribution function 
	\[ \Psi_t(x) := \Psi(x,t) := \int_0^x \psi(y, t) dy \]
	at time $t$ to the initial distribution 
	\[  \Psi_0(x) := \Psi(x, 0) = \int_0^x \psi(y, 0) dy. \]
	They show that $\Psi_t(x)$ satisfies the equation 
	\begin{equation} \label{eq:derivativeflow} \frac{ \Psi_t^{\langle -1 \rangle}(x)}{x}  =  \frac{ \Psi_0^{\langle -1 \rangle}(x+t)}{x+t} \end{equation}
	for $0 < x < 1-t$ and $0 \leq t < 1 $.
	
	In \cite{MR4488834}, Galligo derives a system of two coupled equations to model the motion of real and complex roots for real polynomials under repeated differentiation.  
	
	We explore \eqref{eq:OS} and some related PDEs more in Section \ref{sec:PDE}.  
	
	The functions $\Psi_t$ considered in \eqref{eq:derivativeflow} are radial cumulative distribution functions of sub-probability measures, however a simple normalization results in a similar identity for radial CDFs of probability measures. The function $\Psi_t$ also need not have a true inverse for \eqref{eq:derivativeflow} to provide meaningful information on polynomial roots under differentiation. Instead, \eqref{eq:derivativeflow} can be interpreted as an identity on the generalized left-continuous inverse of the CDF, or \textbf{quantile function} of the distribution. In Appendix \ref{sec:quantile}, we review some basic results on quantile functions which will be used in the proofs of our main results in Sections \ref{sec:Frac conv for R} and \ref{sec:limit_roots}.

	\subsection{Connections to free probability theory}

	The PDE in \eqref{eq:PDEst} also appeared in \cite{MR4530049} to describe the fractional free convolution. In the case of a polynomial with real roots, Steinerberger \cite{MR4669280} proposed an interpretation of the density of zeros of repeated derivatives in terms of free probability theory.  This interpretation has been further explored in \cite{doi:10.1080/10586458.2021.1980752,MR4586815}, culminating in generalized versions of Theorem \ref{thm:AGP}.  In particular, the work \cite{MR4586815} establishes a connection between the real case and finite free probability theory, a subject developed in \cite{MR4408504, 2108.07054}.  
	
	In a similar spirit, Kabluchko \cite{2112.14729} showed that the zeros of real-rooted trigonometric polynomials under repeated differentiation in the asymptotic limit can be described in terms of a free multiplicative convolution involving the free unitary Poisson distribution.  
	
	\editr{While the above works provide connections between differentiation and free probability, and hence random matrices, in the large degree limit these connections can also be seen at finite degree. The following result of Gorin and Marcus \cite{MR4073197} provides such a connection. \begin{theorem}[\cite{MR4073197} Theorem 1.1]\label{prop:Gorin Marcus result}
			Let $P_n$ be a degree $n$ monic polynomial with real roots and let $D_n$ be a $n\times n$ diagonal matrix whose entries are the roots of $P_n$. Let $U_n$ be a $n\times n$ Haar distributed random unitary matrix and $A_{k,n}$ the top left $k\times k$ corner of $UDU^*$. If $Q_n$ is the characteristic polynomial of $A_{k,n}$, then \begin{equation*}
				\E Q_n(z)=\frac{1}{n(n-1)\cdots(k+1)}\left(\frac{\partial}{\partial z}\right)^{n-k}P_n(z).
			\end{equation*} 
		\end{theorem}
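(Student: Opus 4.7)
The plan is to expand both sides as polynomials in $z$ and match coefficients. Writing $\lambda_1,\ldots,\lambda_n$ for the diagonal entries of $D_n$ (equivalently the roots of $P_n$) and $e_j(\lambda)$ for the $j$th elementary symmetric polynomial in the $\lambda_i$'s, differentiating $P_n(z)=\sum_{j=0}^n(-1)^je_j(\lambda)z^{n-j}$ and simplifying the resulting factorials gives
\begin{equation*}
\frac{1}{n(n-1)\cdots(k+1)}P_n^{(n-k)}(z)=\sum_{j=0}^{k}(-1)^j\frac{\binom{k}{j}}{\binom{n}{j}}e_j(\lambda)\,z^{k-j}.
\end{equation*}
On the other hand, expanding the characteristic polynomial as a sum over principal minors yields $\E Q_n(z)=\sum_{j=0}^k(-1)^jz^{k-j}\,\E[e_j(A_{k,n})]$, where $e_j(A_{k,n})$ denotes the sum of all $j\times j$ principal minors of $A_{k,n}$. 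Matching coefficients reduces the theorem to proving
\begin{equation*}
\E\bigl[e_j(A_{k,n})\bigr]=\frac{\binom{k}{j}}{\binom{n}{j}}\,e_j(\lambda),\qquad 0\leq j\leq k.
\end{equation*}

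First I would use Haar invariance to show that every $j\times j$ principal minor of $A_{k,n}$ indexed by some $S\subseteq\{1,\ldots,k\}$ with $|S|=j$ has the same expectation as $\det A_{j,n}$, the determinant of the top-left $j\times j$ corner of $U_nD_nU_n^\ast$. Picking a permutation matrix $P_\sigma$ on $\{1,\ldots,n\}$ sending $\{1,\ldots,j\}$ to $S$, one sees that $P_\sigma U_n$ is again Haar distributed, and the $\{1,\ldots,j\}\times\{1,\ldots,j\}$ block of $(P_\sigma U_n)D_n(P_\sigma U_n)^\ast$ coincides with the $S\times S$ block of $U_nD_nU_n^\ast$. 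Summing over $S$ gives $\E[e_j(A_{k,n})]=\binom{k}{j}\,\E[\det A_{j,n}]$.

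The main computation is then $\E[\det A_{j,n}]=e_j(\lambda)/\binom{n}{j}$, which I would obtain via the Cauchy--Binet formula. Letting $U_1$ denote the top $j$ rows of $U_n$, so that $A_{j,n}=U_1D_nU_1^\ast$, Cauchy--Binet gives
\begin{equation*}
\det(U_1D_nU_1^\ast)=\sum_{T\subseteq\{1,\ldots,n\},\,|T|=j}\Bigl(\prod_{i\in T}\lambda_i\Bigr)\bigl|\det U_1[\,\cdot\,,T]\bigr|^2.
\end{equation*}
Right-multiplication of $U_n$ by a permutation matrix preserves its Haar distribution, so $\E|\det U_1[\,\cdot\,,T]|^2$ depends only on $|T|=j$. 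Applying Cauchy--Binet to the identity $U_1U_1^\ast=I_j$ yields $\sum_T|\det U_1[\,\cdot\,,T]|^2=1$, so each of the $\binom{n}{j}$ expectations equals $1/\binom{n}{j}$. Summing the weights $\prod_{i\in T}\lambda_i$ over $|T|=j$ recovers $e_j(\lambda)$, completing the computation.

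No step presents a real obstacle: the Haar symmetry reduction and the Cauchy--Binet identity are both standard, and the only delicate point is the factorial bookkeeping reconciling $\binom{k}{j}/\binom{n}{j}$ on the combinatorial side with the ratio of descending factorials $\frac{k!(n-j)!}{n!(k-j)!}$ produced by $(n-k)$-fold differentiation, which is purely mechanical.
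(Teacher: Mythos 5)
Your argument is correct. Note that the paper does not prove this statement at all: it is quoted as Theorem 1.1 of Gorin--Marcus \cite{MR4073197}, so there is no in-paper proof to compare against; your write-up is a self-contained, elementary derivation of the cited result, and it follows the standard route used in the finite free probability literature (expected characteristic polynomial via principal minors, Haar invariance to reduce to the top-left corner, Cauchy--Binet to evaluate $\E\lvert\det U_1[\,\cdot\,,T]\rvert^2=1/\binom{n}{j}$). All the individual steps check out: the coefficient identity $\frac{k!\,(n-j)!}{n!\,(k-j)!}=\binom{k}{j}/\binom{n}{j}$, the reduction $\E[e_j(A_{k,n})]=\binom{k}{j}\E[\det A_{j,n}]$ by left-multiplying $U_n$ by a permutation matrix, and the normalization $\sum_T\lvert\det U_1[\,\cdot\,,T]\rvert^2=\det(U_1U_1^\ast)=1$. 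Two small points worth making explicit if you write this up: you are using the monic convention $Q_n(z)=\det(zI-A_{k,n})$, which is what makes the two sides match (the right-hand side is monic of degree $k$); and nothing in the argument uses that the roots are real, so the identity holds for any diagonal $D_n$, with "real roots" in the statement only reflecting the setting of \cite{MR4073197}.
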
 Taking the $k\times k$ corner of $UDU^*$ can be seen as the finite $n$ version of the free compression described in Section \ref{sec:FFCr}, which is in turn related to free addition. The full result of \cite{MR4073197} is much more general than what is stated here. They consider several random matrix models and operations with natural free probabilistic limits and how they relate to polynomial operations. Similar themes were developed by Gorin and Kleptsyn \cite{GorinKleptsyn2023}.}
	
	In this paper, we further explore connections between zeros of random polynomials and free probability theory in the case when the polynomials have roots in the complex plane.

	\section{The \editr{fractional free convolution for $R$-diagonal elements}}\label{sec:Frac conv for R}
	
	In this section we use the relationship given in \cite{MR1400060} between the distribution of $a$ and $\pi(a)$ to extend the $\rdplus$ operation to fractional powers. We then give an alternative expression for the Brown measure of the sum of $k$ identically distributed, freely independent $R$-diagonal elements. Our expression is more direct, as it does not require using the bijection $\mathcal{H}$\editr{, given in \eqref{eq:oplus definition}}, and computing the free convolution powers of symmetric probability measures.   
	
	\subsection{Fractional free convolution powers of the Brown measure} \label{sec:fracBrown}
	
	\begin{definition}[$R$-diagonal fractional free convolution] \label{def:fracconv}
		Let $a$ be an \editr{$R$}-diagonal element with Brown measure $\mu_a$.  For \editr{$k > 1$} a real number, we define $\mu_a^{\rdplus k}$ to be the radially symmetric probability measure with radial CDF given by 
		\begin{equation} \label{eq:kBrownmeasure} \mu_a^{\rdplus k}(\overline{ \D_r}) := \begin{cases} 
				1 + \mathscr{S}^{\langle -1 \rangle}_k(r^{-2}) &\text{ if } r \in (0,\lambda_2^{(k)})  \\
				1 &\text{ if } r \geq \lambda_2^{(k)} \end{cases} \end{equation} 
		for $r>0$, \editr{where for $z \in \C$, in a neighborhood of $[-1,0]$, we define }
		\begin{equation} \label{eq:Slambda}
			\mathscr{S}_k(z):= \frac{1+ z/k}{k (1+z) } \mathscr{S}_{a a^*}(z/k),   
		\end{equation}
		$\lambda_2^{(k)}: = \sqrt{k} \lambda_2$, and $\lambda_2$ is given by \eqref{eq:lambda12}.%, applied to the measure $\mu_{|a|}$.  
	\end{definition}

	The following proposition gives elementary properties of the probability measure $\mu_a^{\rdplus k}$. Then we will give a proposition that shows the fractional free convolution agrees with the previous definition of $\rdplus$ for integer values of $k$.
	
	\begin{proposition}\label{prop:oplus prop}
		Let $k>1$ be a real number, and let $\mu_a$ be the Brown measure of an $R$-diagonal element $a$. Let $R \in [0,\infty]$ be the \editr{smallest} radius of the \editr{closed} disk that $\mu_a$ is supported on.  Then
		\begin{enumerate}[(1)] 
			\item \label{item:oplus1} $\mu_a^{\rdplus k}(\{0\})  = \max\{0,1-k(1-\mu_a(\{0\}))\} $; 
			\item \label{item:oplus2} on $\C \setminus \{0\}$, $\mu_a^{\rdplus k} $ has density, which is supported and positive on the closed disk of radius $\sqrt{k}R$, centered at the origin. 
			%\item  
		\end{enumerate}
	\end{proposition}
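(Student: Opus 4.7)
The plan is to show that the function $\mathscr{S}_k$ from \eqref{eq:Slambda} is a smooth, strictly decreasing bijection from a natural domain $J_k$ onto $(1/(k\lambda_2^2), +\infty)$, and then to read off both parts of the proposition from its inverse.

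First, I would identify the natural domain $J_k := (\max\{-k(1-\mu_a(\{0\})), -1\}, 0)$. By Remark \ref{rem:Stran} and the discussion following Theorem \ref{thm:BM}, $\mathscr{S}_{aa^*}$ is a strictly decreasing real-analytic bijection from $(\mu_a(\{0\}) - 1, 0)$ onto $(1/\lambda_2^2, +\infty)$, with the divergence at the left endpoint coming from the singularity at $z = \mu_a(\{0\}) - 1$. A direct computation shows that the prefactor $\phi(z) := (1+z/k)/(k(1+z))$ satisfies $\phi'(z) = (1-k)/(k^2(1+z)^2) < 0$ for $k > 1$, and $\phi$ is strictly positive on $(-1, 0)$, decreasing from $+\infty$ at $z \to -1^+$ to $1/k$ at $z = 0$. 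Since the product of two positive strictly decreasing functions is strictly decreasing, $\mathscr{S}_k(z) = \phi(z)\,\mathscr{S}_{aa^*}(z/k)$ is strictly decreasing on $J_k$. Its value at $z = 0$ is $\mathscr{S}_{aa^*}(0)/k = 1/(k\lambda_2^2)$, and at the left endpoint of $J_k$ at least one of the two factors diverges to $+\infty$ while the other remains positive, so $\mathscr{S}_k(z) \to +\infty$ there.

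Part \eqref{item:oplus1} then follows directly. Since $\mu_a^{\rdplus k}(\overline{\D_r}) = 1 + \mathscr{S}_k^{\langle -1 \rangle}(r^{-2})$ for $r \in (0, \sqrt{k}\lambda_2)$, the atom at the origin is
\[
\mu_a^{\rdplus k}(\{0\}) = \lim_{r \to 0^+} \mu_a^{\rdplus k}(\overline{\D_r}) = 1 + \lim_{s \to +\infty} \mathscr{S}_k^{\langle -1 \rangle}(s),
\]
and the second limit is the left endpoint of $J_k$, namely $\max\{-k(1-\mu_a(\{0\})), -1\}$. Adding $1$ yields $\max\{0, 1 - k(1-\mu_a(\{0\}))\}$.

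For part \eqref{item:oplus2}, note that $R = \lambda_2$ by Theorem \ref{thm:BM}, so $\sqrt{k}R = \sqrt{k}\lambda_2$. The analysis above together with the inverse function theorem shows $F(r) := 1 + \mathscr{S}_k^{\langle -1 \rangle}(r^{-2})$ is $C^1$ and strictly increasing in $r \in (0, \sqrt{k}\lambda_2)$, and equals $1$ on $[\sqrt{k}\lambda_2, \infty)$. By rotational invariance, the restriction of $\mu_a^{\rdplus k}$ to $\C \setminus \{0\}$ is absolutely continuous with respect to planar Lebesgue measure, with density $F'(|z|)/(2\pi|z|)$ in polar coordinates, strictly positive on $\{0 < |z| < \sqrt{k}\lambda_2\}$ and zero for $|z| > \sqrt{k}\lambda_2$. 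The only mildly delicate point is the boundary case $\mu_a(\{0\}) = 1 - 1/k$, where both $\phi$ and $\mathscr{S}_{aa^*}(\cdot/k)$ diverge at $z = -1$; but since both factors are positive, their product still tends to $+\infty$ and the argument goes through without modification.
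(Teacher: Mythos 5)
Your proposal is correct in substance and follows essentially the same route as the paper: both proofs work directly from \eqref{eq:Slambda}, obtain the atom from the location of the singularity of $\mathscr{S}_k$ at the left endpoint of its interval of definition, and read off the outer radius from $\mathscr{S}_k(0)=\mathscr{S}_{aa^*}(0)/k=1/(kR^2)$; the only structural difference is in part \eqref{item:oplus2}, where the paper deduces positivity of the density by citing Corollary 4.5 of Haagerup--Larsen, while you give a slightly more self-contained argument via strict monotonicity and the inverse function theorem.

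One intermediate assertion, however, is false as stated and should be repaired: $\mathscr{S}_{aa^*}$ is \emph{not} in general a strictly decreasing bijection from $(\mu_a(\{0\})-1,0)$ onto $(1/\lambda_2^2,+\infty)$. By Theorem \ref{thm:BM} and Remark \ref{rem:Stran} its range is $(\lambda_2^{-2},\lambda_1^{-2})$, so it diverges at the left endpoint only when $\lambda_1=0$; when the Brown measure of $a$ lives on an annulus with $\lambda_1>0$ the range is bounded, and for a Haar unitary $\mathscr{S}_{aa^*}\equiv 1$ is not even strictly decreasing --- and Haar unitaries are a central example in this setting. Fortunately your argument does not actually need the false claim: strict monotonicity of $\mathscr{S}_k$ already follows because the prefactor $\phi(z)=(1+z/k)/(k(1+z))$ is strictly decreasing and positive while $\mathscr{S}_{aa^*}(\cdot/k)$ is positive and non-increasing, and your subsequent case analysis of the left endpoint of $J_k$ is sound on its own: when the endpoint is $-1$ the blow-up comes from $\phi$ (with $\mathscr{S}_{aa^*}(-1/k)$ finite, or also divergent in the boundary case you note), and when the endpoint is $-k(1-\mu_a(\{0\}))>-1$ one necessarily has $\mu_a(\{0\})>0$, so $\mathscr{S}_{aa^*}$ genuinely is singular at $\mu_a(\{0\})-1$ while $\phi$ stays finite and positive. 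With the range statement corrected to this case analysis (divergence at the left endpoint of $J_k$ in all cases, value $1/(k\lambda_2^2)$ at $0$), the rest of your proof of both parts goes through as written.
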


	\begin{proposition} \label{prop:brownmeasuresum} 
		Let $k \geq 1$ be an integer and $a_1, \ldots, a_k$ be freely independent copies of an $R$-diagonal element $a$.  Then the Brown measure of $a_1 + \cdots + a_k$ is $\mu_a^{\rdplus k}$ (as defined in Definition \ref{def:fracconv}).
		Furthermore, $\mu_a^{\rdplus j}$ forms a convolution semigroup:
		\begin{equation} \label{eq:semigroup}
			\left(\mu_a^{\rdplus j}\right)^{\rdplus l} = \mu_a^{\rdplus j l} 
		\end{equation} 
		for all real $j,l \geq 1$.
		
	\end{proposition}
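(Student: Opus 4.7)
\emph{Proof proposal.} The plan is to reduce the proposition to an $S$-transform identity and then invoke the Haagerup--Larsen formula in Theorem \ref{thm:BM}. For the integer case, \eqref{eq:oplus definition} of K\"osters--Tikhomirov combined with the fact that sums of free $R$-diagonal elements are $R$-diagonal gives that $b := a_1 + \cdots + a_k$ is $R$-diagonal with $\widetilde{\nu}_b = \widetilde{\nu}_a^{\boxplus k}$. Theorem \ref{thm:BM} then reduces the claim $\mu_b = \mu_a^{\rdplus k}$ to the two identities $\mathscr{S}_{bb^*} = \mathscr{S}_k$ and $\lambda_2^{(b)} = \sqrt{k}\,\lambda_2$. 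The second follows from $\tau(bb^*) = k\tau(aa^*)$, since mixed terms vanish by freeness together with $\tau(a_i) = 0$ for $R$-diagonal $a_i$.

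The crux is the first identity, which I would obtain via a translation between the $R$-transform of a symmetric measure on $\R$ and the $S$-transform of its pushforward under $x \mapsto x^2$. Concretely, for $\widetilde{\nu}$ symmetric on $\R$ with squared pushforward $\nu$, the relation $M_{\widetilde{\nu}}(z) = M_\nu(z^2)$, combined with the alternative $S$-transform characterization $z^2 = y\,\mathscr{S}_\nu(y)/(1+y)$ from Remark \ref{rem:Stran} (where $y = M_\nu(z^2)$), yields the core identity
\begin{equation*}
R_{\widetilde{\nu}}^{\langle -1 \rangle}(y)^2 = y(1+y)\,\mathscr{S}_\nu(y).
\end{equation*}
Applied with $\widetilde{\nu}_b = \widetilde{\nu}_a^{\boxplus k}$, which gives $R_{\widetilde{\nu}_b}^{\langle -1 \rangle}(y) = R_{\widetilde{\nu}_a}^{\langle -1 \rangle}(y/k)$, this produces
\begin{equation*}
\mathscr{S}_{bb^*}(y) = \frac{R_{\widetilde{\nu}_a}^{\langle -1 \rangle}(y/k)^2}{y(1+y)} = \frac{(y/k)(1+y/k)\,\mathscr{S}_{aa^*}(y/k)}{y(1+y)} = \frac{1+y/k}{k(1+y)}\,\mathscr{S}_{aa^*}(y/k) = \mathscr{S}_k(y).
\end{equation*}

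The semigroup identity is then pure algebra. Assuming via Proposition \ref{prop:oplus prop} and the Haagerup--Larsen bijection that $\mu_a^{\rdplus j}$ is the Brown measure of some $R$-diagonal element $c$ with $\mathscr{S}_{cc^*} = \mathscr{S}_j$, applying Definition \ref{def:fracconv} to $c$ with power $l$ produces the $S$-transform
\begin{equation*}
\frac{1+z/l}{l(1+z)}\,\mathscr{S}_{cc^*}(z/l) = \frac{1+z/l}{l(1+z)} \cdot \frac{1+z/(jl)}{j(1+z/l)}\,\mathscr{S}_{aa^*}(z/(jl)) = \frac{1+z/(jl)}{jl(1+z)}\,\mathscr{S}_{aa^*}(z/(jl)),
\end{equation*}
which is exactly $\mathscr{S}_{jl}$; the outer radii match through $\lambda_2^{(jl)} = \sqrt{jl}\,\lambda_2$.

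The main obstacle is analytic rather than algebraic: the $S$-transform is a priori defined only near $0$, the square-root branch implicit in $R_{\widetilde{\nu}}^{\langle -1 \rangle}(y) = \sqrt{y(1+y)\,\mathscr{S}_\nu(y)}$ must be chosen consistently, and an atom at $0$ in $\mu_a$ corresponds to a singularity of $\mathscr{S}_{aa^*}$ that must be tracked through the rescaling $z \mapsto z/k$. These points are handled by working on the interval $(-1+\mu_a(\{0\}),0)$ on which Remark \ref{rem:Stran} guarantees $\mathscr{S}_{aa^*}$ is real, monotone, and maps into $\R^+$, so that all squares and square-roots are unambiguous, with analytic continuation extending the identity as needed.
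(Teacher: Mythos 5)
Your proof is correct, but it takes a genuinely different route from the paper. You go through the K\"osters--Tikhomirov/Haagerup--Larsen symmetrization: $b=a_1+\cdots+a_k$ is $R$-diagonal with $\widetilde\nu_b=\widetilde\nu_a^{\boxplus k}$, and you transfer this to the $S$-transform of $bb^*$ via the identity $R_{\widetilde\nu}^{\langle-1\rangle}(y)^2=y(1+y)\mathscr{S}_\nu(y)$, which I checked is correct with the paper's conventions ($M_{\widetilde\nu}(z)=M_\nu(z^2)$ and $R_\mu^{\langle-1\rangle}(y)=(1+y)M_\mu^{\langle-1\rangle}(y)$); together with $R_{\widetilde\nu_a^{\boxplus k}}^{\langle-1\rangle}(y)=R_{\widetilde\nu_a}^{\langle-1\rangle}(y/k)$ this yields $\mathscr{S}_{bb^*}=\mathscr{S}_k$, and Theorem \ref{thm:BM} finishes the integer case. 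The paper instead avoids the bijection $\mathcal{H}$ entirely (this avoidance is stated as a goal at the start of Section \ref{sec:Frac conv for R}): Lemma \ref{lem:brownequiv} identifies the $\ast$-distribution of $a_1+\cdots+a_k$ with that of the rescaled free compression $k\pi_{k^{-1}}(a)$ by cumulant scaling, and Lemma \ref{lem:Spi} computes $\mathscr{S}_{\lambda^{-2}\pi(a)\pi(a)^*}$ directly, giving \eqref{eq:Spiaa} and hence \eqref{eq:Slambda}. What each buys: your argument is shorter and makes transparent that Definition \ref{def:fracconv} is exactly the $S$-transform form of $\mathcal{H}(\widetilde\nu_a^{\boxplus k})$ (and, via Nica--Speicher's fractional $\boxplus$-powers, it would even give existence of an $R$-diagonal realization of $\mu_a^{\oplus k}$ for real $k$, which both your semigroup step and the paper's implicitly need); but it leans on the cited additivity of symmetrized distributions, whereas the paper's compression computation is self-contained and produces \eqref{eq:Spiaa}, which is reused later (Sections \ref{subsec:examples} and \ref{sec:stable}). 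Your semigroup verification and the $\lambda_2$ bookkeeping coincide with the paper's, and your closing analytic caveats (domains, branch, atom at $0$) are at the same level of rigor the paper itself adopts.
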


	\begin{proof}[Proof of Proposition \ref{prop:oplus prop}]
		Let $\nu$ be the spectral measure of $a a^*$ and $\mathscr{S}_k$ be as in \eqref{eq:Slambda}. \editr{Throughout the proof we will use that} $\mathscr{S}_{\nu}$ is a decreasing function on $(-1+\nu(\{0\}),0)$ with range $(\lambda_2^{-2},\lambda_1^{-2})$.
		
		To prove \ref{item:oplus1}, we first note that if $\mu_a(\{0\})=0$ then $\mathscr{S}_\nu$ and hence $\mathscr{S}_k$ is finite on $(-1,0)$ and thus $\mu^{\rdplus k}(\{0\}) ~=~0$. If $\mu_a({0})\not = 0$ then $\mathscr{S}_{\nu}$ is singular at $-1+\mu_a(\{0\})$, and thus by \eqref{eq:Slambda}, $\mathscr{S}_k$ is singular at $k(-1+\mu_a(\{0\}))$, \editr{ giving an atom at zero with weight $1-k(1-\mu_a(\{0\}))$ if this quantity is positive and no atom otherwise.}

		To prove \ref{item:oplus2}, we use that the prefactor, $\frac{1 + z/k}{k(1+z)}$, and hence the entire term in \eqref{eq:Slambda} is strictly monotonic \editr{on $[-1,0]$}.  Thus, when combined with \eqref{eq:Brownmeasure}, we find that $\mu_a^{\rdplus k}$ has positive density (by Corollary 4.5 of \cite{MR1784419} the density of the Brown measure is positive on its support). Then showing that the inner radius of $\mu_a^{\rdplus k}$ is $0$ is equivalent to showing that the $S$-transform is singular at $-1~+~\mu_a^{\rdplus k}(\{0\})$. If $\mu_a^{\rdplus k}(\{0\})> 0$, from \editr{\ref{item:oplus1}} we see that this happens. On the other hand, if $\mu_a^{\rdplus k}(\{0\})~=~0$, then from \eqref{eq:Slambda} we have that for all $k>1$, the prefactor is singular at $-1$. To compute the outer radius, we note that $\mathscr{S}_k(0) = \frac{1}{k} \mathscr{S}_{aa^*}(0) = \frac{1}{k R^2}$, and conclude that $\mu_a^{\rdplus k}$ is supported on the disk of radius $\sqrt{k} R$, as desired.
	\end{proof}

	Before we prove Proposition \ref{prop:brownmeasuresum}, we will show that the $\ast$-distributions of $k \pi_{k^{-1}}(a)$ and $a_1 + \cdots + a_k$ are the same, and hence both elements have the same Brown measure. Proposition \ref{prop:brownmeasuresum} will then follow by computing the Brown measure of $\pi(a)$. \editr{As mentioned in Section \ref{sec:FFCr}, \cite{MR1400060} provides a natural connection between $\pi_{k^{-1}}(a)$ and $\sum_{i=1}^k a_i$. In fact, in \cite{MR1400060} it is shown that this relationship holds for not just for a single $a$ but for the joint distribution of the family $(a_1, \ldots, a_k)$. The following lemma follows from their results (see also \cite{MR1400060} Sections 14 and 15), we include it for completeness. }
	
	\begin{lemma} \label{lem:brownequiv}
		Let $a, a_1, \ldots, a_k$  be as in Proposition \ref{prop:brownmeasuresum}. The $\ast$-distributions of $k \pi_{k^{-1}}(a)$ and $a_1 + \cdots + a_k$ are equal. Furthermore, they are both $R$-diagonal elements.
	\end{lemma}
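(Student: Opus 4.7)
My plan is to verify both claims at the level of free cumulants, since the joint $\ast$-distribution of any element $b \in \mathscr{M}$ is uniquely determined by the free cumulants of $(b, b^*)$ via \eqref{eq:multRdef}. Thus it suffices to show that for every $n \geq 1$ and every choice $c_1, \ldots, c_n \in \{a, a^*\}$, the $n$-th free cumulant evaluated on the corresponding tuple drawn from $\{a_1+\cdots+a_k,\,(a_1+\cdots+a_k)^*\}$ in $\tau$ equals the $n$-th free cumulant evaluated on the corresponding tuple drawn from $\{k[pap],\,k[pa^*p]\}$ in $\tau_p$.

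For the sum side, $\ast$-freeness of $a_1,\ldots,a_k$ via \eqref{eq:freeness} forces all mixed free cumulants across distinct indices to vanish, and multilinearity then yields
\[
\kappa_n^\tau(b_1,\ldots,b_n) \;=\; k\,\kappa_n^\tau(c_1,\ldots,c_n),
\]
where each $b_i$ is $\sum_j a_j$ or $\sum_j a_j^*$, chosen to match $c_i$. For the compression side, I invoke the joint-compression cumulant identity of Nica--Speicher \cite{MR1400060} (see also Sections~14--15 of \cite{MR2266879}): since $p$ is $\ast$-free from $\{a, a^*\}$ with $\tau(p) = 1/k$,
\[
\kappa_n^{\tau_p}\bigl([pc_1 p], \ldots, [pc_n p]\bigr) \;=\; (1/k)^{\,n-1}\,\kappa_n^{\tau}(c_1, \ldots, c_n).
\]
Using the identity $[pap]^* = [pa^* p]$ together with $n$-linearity,
\[
\kappa_n^{\tau_p}\bigl(k[pc_1 p], \ldots, k[pc_n p]\bigr) \;=\; k^{\,n}(1/k)^{\,n-1}\,\kappa_n^{\tau}(c_1, \ldots, c_n) \;=\; k\,\kappa_n^{\tau}(c_1, \ldots, c_n),
\]
which matches the sum side exactly. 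This proves equality of the $\ast$-distributions.

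For the $R$-diagonality claim, observe that since $a$ is $R$-diagonal, the only nonvanishing cumulants of $(a,a^*)$ are the alternating diagonal ones. Both calculations above preserve this vanishing pattern: they simply rescale each cumulant (including the zero ones) by the scalar $k$, so the image elements $a_1+\cdots+a_k$ and $k\pi_{1/k}(a)$ each have only alternating diagonal cumulants and are therefore $R$-diagonal in the sense recalled in Section~\ref{sec:Rdiag}. The only step that is not immediate is the joint-compression cumulant formula, which I will cite in the stated multivariable form from \cite{MR1400060, MR2266879}; once that is in hand, the remainder of the argument is simple arithmetic of cumulants.
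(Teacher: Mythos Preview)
Your proof is correct and follows essentially the same approach as the paper: both arguments compute and match the free cumulants on the two sides using the Nica--Speicher compression formula (your $(1/k)^{n-1}$ identity is exactly the paper's \eqref{eq:projfreecumulant} after expanding the multilinearity) together with vanishing of mixed cumulants for the free sum, and both deduce $R$-diagonality from the fact that this rescaling preserves the alternating-cumulant vanishing pattern.
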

	
	\begin{proof}
		
		We begin by relating the free cumulants of $(\pi_\lambda(a), \pi_\lambda(a)^*)$ to those of $(a,a^*)$. We then specialize to $\lambda = 1/k$. We will now omit the subscript $\lambda$ from $\pi_{\lambda}$.
		
		By Theorem 14.10 of \cite{MR2266879} the free cumulants of $(\pi(a), \pi( a)^*) $ are a rescaling of the free cumulants $(a,a^*)$ by $\lambda^{-1}$:
		\begin{equation}\label{eq:projfreecumulant} \kappa_n^{\mathscr{M}_p}( \pi( a^{\eps_1}),\ldots ,\pi(a^{\eps_n}) ) = \lambda^{-1} \kappa_n(\lambda a^{\eps_1},\ldots,\lambda a^{\eps_n}) \end{equation}
		with $\eps_i \in \{1,* \}$. Here we have introduced the superscript $\mathscr{M}_p$ to make it clear that all relevant quantities are computed with respect to $\tau_p$.
		In particular, because $a$ is $R$-diagonal, the non-diagonal cumulants vanish, meaning $\pi(\lambda^{-1} a)$ is also $R$-diagonal, and its diagonal cumulants are:
		\[ \kappa_n^{\mathscr{M}_p}(\pi(\lambda^{-1} a^{}), \pi(\lambda^{-1} a^{*})\ldots ,\pi  (\lambda^{-1}a^{}), \pi(\lambda^{-1}a^{*})) =  \lambda^{-1} \kappa_{n}(a, a^*,\ldots, a, a^*). \]
		On the other hand, if $a_1, \ldots, a_k$ are $\ast$-freely independent, non-commutative random variables with the same $\ast$-distribution as $a$, then $x^{(k)} := a_1 +  \cdots + a_k$ is also $R$-diagonal with
		\[ \kappa_n(x^{(k)}, x^{(k)*}, \ldots, x^{(k)}, x^{(k)*}) = \sum_{i=1}^k \kappa_n(a_i,a_i^*,\ldots, a_i, a_i^*) = k \, \kappa_n(a,a^*,\ldots, a, a^*) , \]
		where we have used that, by freeness, the mixed cumulants vanish.
		
		Setting $\lambda = k^{-1}$, we see that $x^{(k)}$ and $k \pi(a)$ have the same $\ast$-distribution, as desired.		
	\end{proof}

	To compute the Brown measure of $a_1+\cdots+a_k$ in Proposition \ref{prop:brownmeasuresum}, it now suffices to compute the $S$-transform of $ \pi_{\editr{k^{-1} }}(a) \pi_{\editr{k^{-1} } }(a)^*$; we will use the following lemma to compute this $S$-transform. We remark that similar computations were done in \cite{MR4085372} to study the Brown measure of products of truncations of $\ast$-freely independent Haar unitary elements.
	
	\begin{lemma} \label{lem:Spi} Let $p \in \mathscr{M}$ be a projection with $\tau(p) = \lambda \in (0,1]$, $a \in \mathscr{M}$, \editr{an $R$-diagonal element}, \footnote{\editr{We thank Martin Auer and an anonymous referee for pointing out an error in our previous assumptions to this lemma.}} and $ x \in \mathscr{M}$ be self-adjoint, such that $p$ is free from $a$ and $x$. Then we have for $z$ in a neighborhood of the origin that:
		\begin{enumerate}[(1)]
			\item $\mathscr{S}_p(z) = \frac{1+ z}{\lambda + z} $ \label{Spi1}
			\item $\mathscr{S}_{p a p a^*p}(z) = \left( \frac{1+ z}{\lambda + z} \right)^2 \mathscr{S}_{ a  a^*}(z)  $ \label{Spi2}
			\item $\mathscr{S}_{\pi(x)}(z) =  \frac{\lambda(z+1)}{\lambda z+1} \mathscr{S}_{pxp}(\lambda z)$ \label{Spi3}
		\end{enumerate}
	\end{lemma}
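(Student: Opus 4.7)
The plan is to dispatch the three parts in order, with \ref{Spi2} being the substantive one. For \ref{Spi1}, since $p^k = p$ for all $k \ge 1$, the moment generating function is $M_p(z) = \lambda z/(1-z)$; solving $w = \lambda z/(1-z)$ for $z$ yields $M_p^{\langle -1\rangle}(w) = w/(\lambda + w)$, and substituting into the formula $\mathscr{S}_{\mu}(z) = \frac{1+z}{z} M_{\mu}^{\langle -1\rangle}(z)$ from Remark \ref{rem:Stran} gives $\mathscr{S}_p(z) = (1+z)/(\lambda + z)$. For \ref{Spi3}, I would use that $\pi(x)^k = [(pxp)^k]$ in $\mathscr{M}_p$ together with the normalization $\tau_p = \lambda^{-1}\tau(p \, \cdot \, p)$ to obtain $M_{\pi(x)}(z) = \lambda^{-1} M_{pxp}(z)$. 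Compositional inversion gives $M_{\pi(x)}^{\langle -1\rangle}(z) = M_{pxp}^{\langle -1\rangle}(\lambda z)$, and rewriting the right-hand side in terms of $\mathscr{S}_{pxp}(\lambda z)$ via the same alternative $S$-transform formula produces the claimed identity after a brief simplification.

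For \ref{Spi2}, the key step is to exploit the $R$-diagonal polar decomposition $a = uh$, where $u$ is a Haar unitary $\ast$-free from $h = |a|$. Since $p$ is free from $a$, it is free from $W^*(u,h)$, so in particular $u$ is free from $\mathscr{A} := W^*(h,p)$. The classical fact that a Haar unitary $u$ free from $\mathscr{A}$ has $u\mathscr{A}u^*$ free from $\mathscr{A}$ then implies that $u(hph)u^*$ is free from $p$. I would then write $papa^*p = p \cdot u(hph)u^* \cdot p$, apply traciality together with $p^2 = p$ to reduce the distribution to that of $u(hph)u^* \cdot p$, and invoke multiplicativity of the $S$-transform under freeness to conclude $\mathscr{S}_{papa^*p}(z) = \mathscr{S}_{hph}(z)\mathscr{S}_p(z)$. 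A second round of traciality gives $\mu_{hph} = \mu_{h^2 p}$, and since $h$ is free from $p$, multiplicativity again yields $\mathscr{S}_{hph}(z) = \mathscr{S}_{h^2}(z)\mathscr{S}_p(z) = \mathscr{S}_{aa^*}(z)\mathscr{S}_p(z)$, using $\mu_{h^2} = \mu_{a^*a} = \mu_{aa^*}$. Combining these with part \ref{Spi1} produces the claimed squared prefactor.

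The main obstacle lies in \ref{Spi2}: one must correctly invoke the $R$-diagonal polar decomposition and the Haar-unitary conjugation result to decouple $u(hph)u^*$ from $p$, and verify the nonzero-trace hypotheses at each application of multiplicativity of the $S$-transform (which hold since $\tau(p) = \lambda > 0$ and $\tau(aa^*) > 0$ whenever $a \neq 0$). Once these are in place, parts \ref{Spi1} and \ref{Spi3} are essentially formal manipulations of the moment and $S$-transform formulas.
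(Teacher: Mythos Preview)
Your proposal is correct and follows essentially the same approach as the paper. Parts \ref{Spi1} and \ref{Spi3} are handled identically. For \ref{Spi2}, the paper argues more tersely that traciality gives $\mu_{papa^*p} = \mu_{papa^*}$, that $R$-diagonality of $a$ makes $p$ free from $apa^*$, and then factors $\mathscr{S}_{papa^*} = \mathscr{S}_p \mathscr{S}_{apa^*} = \mathscr{S}_p^2 \mathscr{S}_{aa^*}$ (the last step via $\mu_{apa^*} = \mu_{a^*ap}$ and freeness of $p$ from $a^*a$); your version simply unpacks the polar decomposition $a = uh$ to make the freeness of $p$ from $apa^* = u(hph)u^*$ explicit via the Haar-unitary conjugation fact, which is exactly the mechanism underlying the paper's one-line claim.
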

	\begin{proof}
		To prove \ref{Spi1}, we begin by computing the moment generating function for $p$:
		\[ M_p(z) = \sum_{k=1}^\infty \tau(p^k) z^k = \sum_{k=1}^\infty \lambda z^k = \frac{ \lambda z}{ 1 - z}. \]  
		We then compute its inverse and get its $S$-transform:
		\[ M^{\langle -1 \rangle}(z) = \frac{z}{\lambda+z} \text{ and thus } \mathscr{S}_p(z) = \frac{1+z}{z} M^{\langle -1 \rangle}(z)  = \frac{ 1+z}{\lambda+z}.\]

		To prove  \ref{Spi2}, we use that $\tau$ is tracial so the $S$-transform of $p apa^* p$ equals the $S$-transform of  $p^2 apa^* $ and hence $papa^*$. Then, \editr{because $a$ is $R$-diagonal,} $p$ and $a$ are free, and \editr{ hence $p$ is free from $apa^*$,} the S-transform of $p a p a^*$ factorizes as 
		\[ \mathscr{S}_{p a p a^*p}(z) = \editr{\mathscr{S}_{p}(z)\mathscr{S}_{apa^*}(z)}=  \mathscr{S}_{ p}(z)^2 \mathscr{S}_{ a  a^*}(z) .\]
		The desired result follows by applying \ref{Spi1}.
		
		To prove  \ref{Spi3}, we use that the moments of $\pi(x)$ equal the corresponding moments of $pxp$, rescaled by $ \lambda$, so:
		\[ \lambda M_{\pi(x)}(z) =  M_{pxp}(z). \]
		Then the $S$-transform is  
		\[ \mathscr{S}_{\pi(x)}(z) =   \frac{z+1}{z}  M^{\langle -1 \rangle}_{\pi(x)}(z) = \frac{z+1}{z}  M_{pxp}^{\langle -1 \rangle}(\lambda  z) = \frac{\lambda(z+1)}{\lambda z+1} \mathscr{S}_{pxp}(\lambda z) ,\]
		as desired.
	\end{proof}
	
	We now apply the above lemma to compute the $S$-transform of \\
	$\pi(a)\pi(a)^*= \pi(a)\pi(a^*)= \pi(p a p a^*p) = \pi(a p a^*)$.

	\begin{proof}[Proof of Proposition \ref{prop:brownmeasuresum}]
		
		We begin by applying \ref{Spi3} from Lemma \ref{lem:Spi} to $p a p a^*p$ and then 
		\ref{Spi2} to the result:
		
		\[ \mathscr{S}_{\pi( p a p a^*p)}(z) = \frac{\lambda(z+1)}{\lambda z+1}  \mathscr{S}_{p a p a^* p}(\lambda z)  =\frac{\lambda(z+1)}{\lambda z+1}   \frac{ (1+\lambda z)^2}{(\lambda+\lambda z)^2} \mathscr{S}_{a a^*}(\lambda z) = \frac{ 1+\lambda z}{\lambda(1+ z)} \mathscr{S}_{a a^*}(\lambda z) . \]

		Then using that $\mathscr{S}_{\lambda^{-2} \pi(a) \pi(a)^*} =  \lambda^{2}\mathscr{S}_{ \pi(a) \pi(a)^*}  $ gives
		\begin{equation} \label{eq:Spiaa} \mathscr{S}_{\lambda^{-2}\pi( a)  \pi (a)^*}(z) = \frac{ \lambda (1+\lambda z)}{1+ z} \mathscr{S}_{a a^*}(\lambda z).  \end{equation}
		Setting $\lambda = 1/k$ shows that the Brown measure of \editr{ $k \pi( a) $ and hence, by Lemma \ref{lem:brownequiv}, $a_1 + \cdots + a_k$ is $\mu_a^{\rdplus k}$, as \eqref{eq:Spiaa} is the S-transform given in \eqref{eq:Slambda}.}

		We see that $\mu_a^{\rdplus j}$ forms a semigroup by using \eqref{eq:Slambda} to compute the $S$-transforms of each side of \eqref{eq:semigroup}:
		\[  \frac{1+ z/l}{l (1+z) } \frac{1+ (z/l)/j}{j (1+z/l) } \mathscr{S}_{a a^*}\left(\frac{z/l}{j}\right) = 
		\frac{1+ z/(lj)}{lj (1+z) } \mathscr{S}_{a a^*}\left(\frac{z}{lj}\right). \]
	\end{proof}

	\subsection{Connection between Brown measures and derivatives of random polynomials} \label{sec:connection}
	
	We now relate the fractional free convolution to the roots of the derivatives of random polynomials.
	
	Given that the measures in $\mathcal{RP}_p$ arise as the limiting root distribution for polynomials with random coefficients, it makes sense to define $\mathcal{RP}_p$ as the domain of the differentiation flow. Additionally, we let $\Phi_t(r) := \frac{1}{1-t}\Psi_t(r) $, be a rescaling of $\Psi_t$ in \eqref{eq:derivativeflow}, in order to keep the total mass constant. It is easy to see that $\Phi_t$ satisfies the following definition. Throughout we will use $\Phi^{\langle -1 \rangle}$ to denote the quantile function of a radial CDF $\Phi$, see Appendix \ref{sec:quantile}.

	\begin{definition}
		Let $\mu \in \mathcal{RP}_p$ with radial CDF $\Phi_{0}$. The \textbf{differentiation flow} starting from $\mu$ is the subset $\{\mu^{t}\}_{0\leq t <1}$ of $\mathcal{RP}_p$ such that $\mu^{t}$ is the probability measure with radial CDF $\Phi_t$, and quantile functions satisfying \begin{equation}\label{eq:A:CDF identity}
			\Phi_t^{\langle-1\rangle}(x)=\frac{x(1-t)\Phi_0^{\langle-1\rangle}((1-t)x+t)}{x(1-t)+t},
		\end{equation} for $x\in(0,1)$, where $\Phi_t^{\langle -1 \rangle}$ is the quantile function of $\Phi_t$ and the existence of such measures follows from Lemma \ref{lemma:quantile facts} and the fact that the functions defined by \eqref{eq:A:CDF identity} are left-continuous and non-decreasing.  
	\end{definition} 
	
	\begin{remark}\label{rmk:A:connecting diff flow to feng-yao thm}
		Equation \eqref{eq:A:CDF identity} is a rescaled version of \eqref{eq:derivativeflow} to ensure the total mass of the associated measure is $1$. Hence, if $\mu$ is the measure arising in  part \ref{thm:part 1:feng-yao} of Theorem \ref{thm:feng-yao}, then for any $t\in(0,1)$ $\mu^{t}$ is exactly the limiting measure in part \ref{thm:part 2:feng-yao} of Theorem \ref{thm:feng-yao}.
	\end{remark}
	
	We now connect the differentiation flow to the fractional free convolution of Brown measures, which can be seen in Figure \ref{fig:A:Diagram of relationship}.  For a rotationally invariant measure $\mu$ in the complex plane, recall that $\sq\mu(\mathbb{D}_r):=\mu(\mathbb{D}_{\sqrt{r}})$ for $r > 0$, where $\mathbb{D}_r:=\{z \in \mathbb{C}: |z|< r\}$, and $\sq^{-1} \mu (\mathbb{D}_r) = \mu(\mathbb{D}_{r^2})$ is the inverse map.

	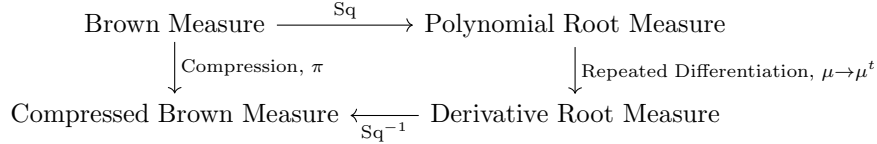
\begin{figure}[hhh] 
		\begin{tikzcd}
			\text{Brown Measure} \arrow[r, "\sq"] \arrow[d, "\text{Compression, } \pi"]
			& \text{Polynomial Root Measure} \arrow[d, "\text{Repeated Differentiation, } \mu \to \mu^{t}"] \\
			\text{Compressed Brown Measure} 
			& \text{Derivative Root Measure} \arrow[l, "\sq^{-1}"]
		\end{tikzcd} \caption{This diagram represents the relationship between the free compression of $R$-diagonal elements and repeated differentiation of random polynomials. The map $\sq$ on radially symmetric measures is defined before Theorem \ref{thm:A:Kacs free convolution}. Note, when comparing repeated differentiation directly to free compressions there is no need to include any rescaling of the roots, unlike when comparing to the convolution $\oplus$.}%
		\label{fig:A:Diagram of relationship}%
	\end{figure}
	
	\edits{For a degree $n$ random polynomial $P_n$, we will be interested in the roots of the $\lceil tn \rceil$-th derivative of $P_n$, where $t \in (0, 1)$.  It is often convenient to view $t$ as time, with $t = 0$ corresponding to the initial distribution of roots. }
	
	%			\begin{theorem}\label{thm:A:connection general case}
		%				Let \begin{equation*}
			%					P_n(z) = \sum_{k=0}^n \xi_k P_{k,n} z^k
			%				\end{equation*} be a random polynomial, with $P_{k,n}$ satisfying Assumption \ref{assump:a1} and $\xi_k$ being iid random variables satisfying  \eqref{eq:xik}, such that $\mu$ is the limiting empirical root distribution of $P_n$. Additionally, assume there exists an $R$-diagonal element $a$ affiliated to some non-commutative probability space $(\mathcal{M},\tau)$ with Brown measure $\sq^{-1}\mu$.	For any fixed $\lambda \in (0,1)$, let $\mu_\lambda$ be the limiting empirical root distribution of the $\lceil (1-\lambda)n \rceil$-th derivative of $P_n(\lambda^2x)$  as $n \to \infty$ (whose existence is guaranteed by Theorem \ref{thm:feng-yao}), then \editr{\[ \mu_\lambda=\sq[(\sq^{-1}\mu)^{\oplus 1/\lambda}]. \] }
		%			\end{theorem}
	\begin{theorem}\label{thm:A:connection general case}
		Let \begin{equation*}
			P_n(z) = \sum_{k=0}^n \xi_k P_{k,n} z^k
		\end{equation*} be a random polynomial, with $P_{k,n}$ satisfying Assumption \ref{assump:a1} and $\xi_k$ being iid random variables satisfying  \eqref{eq:xik}, such that $\mu$ is the limiting empirical root distribution of $P_n$. Additionally, assume there exists an $R$-diagonal element $a$ affiliated to some non-commutative probability space $(\mathcal{M},\tau)$ with Brown measure $\sq^{-1}\mu$.	For any fixed $t \in (0,1)$, let $\mu_t$ be the limiting empirical root distribution of the $\lceil t n \rceil$-th derivative of $P_n((1-t)^2x)$  as $n \to \infty$ (whose existence is guaranteed by Theorem \ref{thm:feng-yao}), then \editr{\[ \mu_{t}=\sq[(\sq^{-1}\mu)^{\oplus 1/(1-t)}]. \] }
	\end{theorem}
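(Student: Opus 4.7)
The plan is to verify the identity $\mu_t = \sq[(\sq^{-1}\mu)^{\oplus 1/(1-t)}]$ by showing both radially symmetric probability measures have the same radial CDF. The two sides come from very different characterizations: $\mu_t$ is produced by polynomial differentiation (Theorem~\ref{thm:feng-yao}), while the right-hand side is built through $S$-transforms (Definition~\ref{def:fracconv} and Theorem~\ref{thm:BM}). The key step is to translate both descriptions into statements about the quantile function $\Phi_0^{\langle -1\rangle}$ of $\mu$ and then match them.

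First, I would identify $\mu_t$ directly. The $\lceil tn\rceil$-th derivative of $P_n((1-t)^2 x)$ in $x$ has the same zeros as $P_n^{(\lceil tn\rceil)}$ rescaled by $(1-t)^{-2}$, so Theorem~\ref{thm:feng-yao} and Remark~\ref{rmk:A:connecting diff flow to feng-yao thm} give
\[
\mu_t(\D_r) = \Phi_t((1-t)^2 r),
\]
where $\Phi_t$ is the radial CDF of the differentiation flow starting from $\mu$, whose quantile function is given by \eqref{eq:A:CDF identity}.

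Second, since $\sq^{-1}\mu$ has radial CDF $r \mapsto \Phi_0(r^2)$ and equals the Brown measure of $a$, Theorem~\ref{thm:BM} combined with a change of variables yields the key identification
\[
\mathscr{S}_{a^*a}(u) = \frac{1}{\Phi_0^{\langle -1\rangle}(u+1)}, \qquad u \in (-1 + \mu_a(\{0\}), 0).
\]
Substituting into \eqref{eq:Slambda} with $k = 1/(1-t)$ produces an explicit formula for $\mathscr{S}_k$, and combining \eqref{eq:kBrownmeasure} with the definition of $\sq$ gives $(\sq\nu)(\D_r) = 1 + \mathscr{S}_k^{\langle -1\rangle}(r^{-1})$ where $\nu := (\sq^{-1}\mu)^{\oplus 1/(1-t)}$. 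Setting $x := (\sq\nu)(\D_r)$ and solving for $r$ via the explicit formula for $\mathscr{S}_k$ yields, after an elementary algebraic simplification,
\[
(1-t)^2 r = \frac{x(1-t)\Phi_0^{\langle -1\rangle}((1-t)x + t)}{x(1-t) + t} = \Phi_t^{\langle -1\rangle}(x),
\]
where the last equality is \eqref{eq:A:CDF identity}. This is equivalent to $\mu_t(\D_r) = x = (\sq\nu)(\D_r)$, which gives the theorem.

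The main obstacle is the treatment of boundary behavior. The inversion relating $S$-transforms and quantile functions is valid only on the strict interiors of the relevant intervals, so one must carefully handle the outer support radius (tracked by $\lambda_2^{(k)}$ in Definition~\ref{def:fracconv}), the possible atom at the origin described by Proposition~\ref{prop:oplus prop}~\ref{item:oplus1}, and any atoms of $\mu$. Lemma~\ref{lemma:quantile facts} from Appendix~\ref{sec:quantile} should let one pass from equality of quantile functions on the common domain of strict monotonicity to equality of the underlying measures, using the monotonicity of $\mathscr{S}_{a^*a}$ recorded in Remark~\ref{rem:Stran} and the structure of the support of $\mu_a^{\rdplus k}$ from Proposition~\ref{prop:oplus prop}~\ref{item:oplus2}.
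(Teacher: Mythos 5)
Your proposal is correct and follows essentially the same route as the paper's proof: both invert the radial CDF formulas \eqref{eq:Brownmeasure} and \eqref{eq:kBrownmeasure} into quantile functions, substitute the rescaled $S$-transform \eqref{eq:Slambda} with $k=1/(1-t)$, apply $\sq$, and match the resulting identity with the differentiation-flow relation \eqref{eq:A:CDF identity} via Remark \ref{rmk:A:connecting diff flow to feng-yao thm} and the $(1-t)^2$ rescaling of the roots. Your extra attention to atoms and support endpoints is a reasonable refinement of the same argument rather than a different method.
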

	
	\begin{proof}
		Let $a$ be an $R$-diagonal element with Brown measure $\sq^{-1}\mu$.
		We then let $F_a(r)$ be the radial CDF of the Brown measure of $a$. From \eqref{eq:Brownmeasure}, we have that $F_a(r) =   1 + \mathscr{S}_{a^* a}^{\langle -1 \rangle}(r^{-2})$, for $ r \in [\lambda_1, \lambda_2]$. Solving for $F_a^{\langle -1 \rangle}$ gives:
		\[ F_a^{\langle -1 \rangle}(x) = \frac{1}{ \sqrt{\mathscr{S}_{a^* a}(x-1)}},\]
		for $x \in (0,1)$. 
		
		\editr{Let $\lambda =1-t$ and} $ F_\lambda $ be the radial CDF of the measure $(\sq^{-1}\mu)^{\oplus 1/\lambda}$, recalling the definition of $\mathscr{S}_k$ in \eqref{eq:Slambda} and setting $k = \lambda^{-1}$, we have from \eqref{eq:kBrownmeasure} that 
		\[ F_\lambda^{\langle -1 \rangle}(x) = \frac{1}{ \sqrt{\mathscr{S}_{\lambda^{-1}}(x-1)}},\]
		for $x \in (0,1)$.

		Evaluating \eqref{eq:Slambda} at $z = x-1$ and $k =\lambda^{-1} $ gives:
		\[  \mathscr{S}_{\lambda^{-1}}(x-1) = \frac{\lambda}{x}(\lambda (x-1)+1) \mathscr{S}_{a^* a}(\lambda(x-1)). \]
		Which in terms of $F_\lambda^{\langle -1 \rangle}$ and $F_a^{\langle -1 \rangle}(x)$  is \begin{equation}\label{eq:F evolution}
			F_\lambda^{\langle -1 \rangle}(x)=\sqrt{\frac{x}{\lambda(1+\lambda(x-1))}} F_a^{\langle -1 \rangle}((x-1)\lambda+1).
		\end{equation} 
		We let $G_\lambda$ be the radial CDF of $\sq(\sq^{-1}\mu)^{\oplus 1/\lambda}$ and $G_a$ the radial CDF of $\mu$.  We see from \eqref{eq:F evolution} that \begin{equation}\label{eq:g evolution}
			G_\lambda^{\langle -1 \rangle}(x)=\frac{x}{\lambda(1+\lambda(x-1))}G_a^{\langle -1 \rangle}((x-1)\lambda+1).
		\end{equation} After comparing \eqref{eq:g evolution} to \eqref{eq:A:CDF identity} with $t = 1- \lambda$ and initial condition $\Phi_0 = G_a$ we see that \[ G_\lambda^{\langle -1 \rangle}(x)=\frac{1}{\lambda^2}\Phi_{1-\lambda}^{\langle -1 \rangle}(x)\] for all $x\in(0,1)$. As discussed in Remark \ref{rmk:A:connecting diff flow to feng-yao thm}, $\Phi_{1-\lambda}$ is the limiting radial CDF of the $\lceil (1-\lambda)n \rceil$-th derivative of $P_n(x)$. Hence, $G_\lambda$ is the limiting radial CDF of the $\lceil (1-\lambda)n \rceil$-th derivative of $P_n(\lambda^2x)$.
	\end{proof}

	We conclude this section by translating properties of the fractional free convolution of Brown measures to the differentiation flow.

	\begin{proposition}
		Let $\mu\in\mathcal{RP}_p(\C)$. Then \begin{enumerate}[(1)]
			\item For any $t\in(0,1)$, $0$ is in the support of $\mu^{t}$.
			\item For any $t\in(0,1)$, $\mu^{t}$ has density on $\C$. In particular $\mu^{t}(\{z:|z|=r \})=0$ for any $r>0$. 
		\end{enumerate}
	\end{proposition}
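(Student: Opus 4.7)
The plan is to deduce both statements from the corresponding properties of $\mu_a^{\rdplus k}$ established in Proposition~\ref{prop:oplus prop}, via the identification from Theorem~\ref{thm:A:connection general case}. Set $\lambda := 1-t \in (0,1)$ and $k := 1/\lambda > 1$, and let $a$ be an $R$-diagonal element with Brown measure $\sq^{-1}\mu$, so that Theorem~\ref{thm:A:connection general case} gives $\mu^{t} = \sq[\mu_a^{\rdplus k}]$. For part~(1), Proposition~\ref{prop:oplus prop}\,(2) asserts that $\mu_a^{\rdplus k}$ is supported on, and has positive density throughout, the closed disk $\overline{\D_{\sqrt{k}R}}$, where $R$ is the outer radius of $\supp\mu_a$; in particular $0\in\supp\mu_a^{\rdplus k}$, and since $\sq$ sends $\D_r$ to $\D_{r^2}$ it preserves the property of containing $0$ in the support, yielding $0\in\supp\mu^{t}$.

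For part~(2), the defining integrability condition of $\mathcal{RP}_p(\C)$ forces $\mu(\{0\})=0$, and hence $\mu_a(\{0\})=\sq^{-1}\mu(\{0\})=0$ as well. Proposition~\ref{prop:oplus prop}\,(1) then gives $\mu_a^{\rdplus k}(\{0\}) = \max\{0,\,1-k\}=0$ since $k>1$, so combined with Proposition~\ref{prop:oplus prop}\,(2) the measure $\mu_a^{\rdplus k}$ is absolutely continuous on all of $\C$. The map $\sq$ acts on the radial coordinate as the smooth diffeomorphism $s\mapsto s^{2}$ on $(0,\infty)$, and a direct change of variables shows that it preserves absolute continuity with respect to Lebesgue measure for rotationally invariant measures on $\C$ (explicitly, if $\mu_a^{\rdplus k}$ has radial density $\tilde f$, then $\sq[\mu_a^{\rdplus k}]$ has radial density $\tilde f(\sqrt{r})/(2r)$, which integrates against the area element $2\pi r\,dr$ to the finite mass of $\mu_a^{\rdplus k}$). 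Hence $\mu^{t}$ has a density on $\C$, and the ``in particular'' clause follows because any circle has Lebesgue measure zero.

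The main obstacle is the standing hypothesis of Theorem~\ref{thm:A:connection general case}: the existence of an $R$-diagonal element with Brown measure $\sq^{-1}\mu$ for arbitrary $\mu\in\mathcal{RP}_p(\C)$. Not every $\mu\in\mathcal{RP}_p(\C)$ admits such a representation directly, because by Theorem~\ref{thm:BM} the Brown measures of $R$-diagonal elements are parametrized through the $S$-transform by measures on $\R^+$ and thus have radial CDFs of a rather specific form on their ring of support. The natural fix is to approximate a general $\mu\in\mathcal{RP}_p(\C)$ (for instance by radial mollification) by measures for which the $R$-diagonal realization exists, apply the result, and pass to the limit using the continuity of the flow defined by~\eqref{eq:A:CDF identity} in the weak topology. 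As a cross-check, part~(1) and the ``no mass on circles'' portion of part~(2) admit an elementary direct proof from~\eqref{eq:A:CDF identity}: the former because $\Phi_t^{\langle-1\rangle}(x)\to 0$ as $x\to 0^+$ (using $\Phi_0^{\langle-1\rangle}(t)<\infty$ for $t<1$), and the latter because $\Phi_t^{\langle-1\rangle}\equiv r_0>0$ on any interval would force $\Phi_0^{\langle-1\rangle}(\lambda x+t)=r_0(1+t/(\lambda x))$, a strictly decreasing function of $x$ on that interval, contradicting the monotonicity of the quantile function.
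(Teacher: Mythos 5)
Your main route does not match the paper's, and it contains a genuine gap. The paper proves this proposition directly from the flow identity \eqref{eq:A:CDF identity}, by repeating the argument of Proposition \ref{prop:oplus prop} with \eqref{eq:A:CDF identity} in place of \eqref{eq:Slambda}; it never invokes Theorem \ref{thm:A:connection general case}, precisely because (as you correctly observe) a general $\mu\in\mathcal{RP}_p(\C)$ need not be $\sq$ of the Brown measure of any $R$-diagonal element (e.g.\ a mixture of two uniform circle measures, or a measure whose radial part is singular continuous, lies in $\mathcal{RP}_p(\C)$ but is not of Haagerup--Larsen form). The problem is your proposed repair. Weak convergence does not preserve the two properties you need to transfer: absolute continuity is destroyed by weak limits (singular measures converge weakly to absolutely continuous ones and vice versa), and $0\in\supp\mu_n^{t}$ for each approximant does not force $0\in\supp\mu^{t}$ without a uniform lower bound on the mass near the origin, which you do not supply. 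So ``mollify, apply Proposition \ref{prop:oplus prop}, pass to the limit'' cannot deliver part~(2), and delivers part~(1) only with extra quantitative input; at that point one is forced back to \eqref{eq:A:CDF identity} anyway.

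Your ``cross-check'' arguments are in fact the paper's proof, and they are correct as far as they go: $\Phi_t^{\langle-1\rangle}(x)\to 0$ as $x\to 0^+$ (since $\Phi_0^{\langle-1\rangle}$ is bounded near $t<1$ and $\mu$ has no atom at the origin) gives part~(1), and your monotonicity contradiction shows $\Phi_t^{\langle-1\rangle}$ has no interval of constancy, i.e.\ no mass on any circle of positive radius. But that only shows the radial CDF of $\mu^{t}$ is continuous, which is strictly weaker than ``$\mu^{t}$ has density on $\C$'': a rotationally invariant measure with atomless radial part can still be singular with respect to Lebesgue measure. To close this along the paper's lines, use that the prefactor $g(x)=\frac{x(1-t)}{x(1-t)+t}$ in \eqref{eq:A:CDF identity} is not merely strictly increasing but has $\frac{d}{dx}\log g(x)=\frac{t}{x\bigl(x(1-t)+t\bigr)}\geq t>0$ on $(0,1)$; hence $\log\Phi_t^{\langle-1\rangle}=\log g+\log\Phi_0^{\langle-1\rangle}((1-t)\cdot{}+t)$ increases by at least $t\,|x-y|$ between any two points, so its generalized inverse is Lipschitz with constant $1/t$, which makes the radial law of $\mu^{t}$ (in logarithmic scale, hence on $(0,\infty)$) absolutely continuous and yields the density claim in part~(2). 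Without an argument of this kind, the ``has density'' statement is not established by your proposal.
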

	
	\begin{proof}
		This follows in a completely analogous way to the proof of Proposition \ref{prop:oplus prop} with \eqref{eq:A:CDF identity} replacing  \eqref{eq:Slambda}.			
	\end{proof}

	\section{Consequences of the theory and examples} \label{sec:examples}
	
	In this section, \editr{we consider some examples and consequences of Theorem \ref{thm:A:connection general case}. Section \ref{subsec:examples} considers specific examples with explicit distributions, while Sections \ref{sec:products and polynomials} and \ref{sec:commutator} consider more general comparisons between random polynomials and algebraic operators of $R$-diagonal elements. Finally, as Theorem \ref{thm:A:connection general case} relates differentiation to addition of $R$-diagonal elements Sections \ref{sec:limit_roots}, \ref{sec:stable}, and \ref{sec:limit of elliptic polynomials} consider limit theorems for repeated differentiation as the number of derivatives approaches the degree and stable laws.} 
	
	\subsection{Examples}\label{subsec:examples} In this section we consider specific examples to compare the sum of $R$-diagonal elements to repeated differentiation of random polynomials in Theorem \ref{thm:A:connection general case}.
	
	\subsubsection{Circular law and random Taylor polynomials}\label{sec:CLandTP}
	
	\editr{ We call an element $c \in \mathscr{M}$ a {\bf standard circular element} if its $R$-transform \editr{(recall \eqref{eq:multRdef})} is 
		\[ R_{c,c^*}(z_1,z_2) = z_1 z_2 + z_2 z_1, \]
		We call \begin{equation} \label{eq:taylor}
			P_n(z)=\sum_{k=0}^{n}\frac{\xi_kn^k}{k!}z^k,
		\end{equation}
		where $\xi_0,\xi_1,\dots$ are iid random variables satisfying \eqref{eq:xik}, the {\bf random Taylor polynomial}\footnote{The name ``random Taylor polynomial'' for this model comes from \cite{MR4242313}.}.
		In this section we will show that 
		\[ \sq( \mu_{ \pi_\lambda(c) } ) = \mu^{t} \]
		when $\lambda = 1-t$.
	}

	%	We call an element $c \in \mathscr{M}$ a standard circular element if its $R$-transform \editr{(recall \eqref{eq:multRdef})} is 
	%	\[ R_{c,c^*}(z_1,z_2) = z_1 z_2 + z_2 z_1. \]
	The Brown measure of a standard circular element is the uniform measure on the unit disk.
	A standard computation (see, for example, \cite{MR1784419} Example 5.1) shows that the $S$-transform of $cc^*$ is:
	\[ \mathscr{S}_{c c^*}(z) = \frac{1}{1+z}. \]
	From \eqref{eq:Spiaa}, we \editr{ have} that %%Made a correction here- David
	\[ \mathscr{S}_{\lambda^{-2} \pi_\lambda(c) \pi_\lambda(c)^*}(z) =  \frac{\lambda(1 + \lambda z)}{1+z} \frac{1}{1+ \lambda z} =  \frac{\lambda }{1+z}, \]
	which recovers the result from \cite{MR1784419} that the free compression of a standard circular element by $\pi_\lambda$ is just $\lambda^{1/2}$ times a standard circular element. Furthermore, setting $\lambda = \editr{k^{-1}}$ verifies the well known fact that if $c_1, \ldots, c_k$ are $\ast$-freely independent standard circular elements, then $k^{-1/2} \sum_{i=1}^k c_i$ is also a standard circular element. \editr{To match this Brown measure with the roots of a random polynomial, we remove the $\lambda^{-2}$ factor to get:
		\[ \mathscr{S}_{\pi_\lambda(c) \pi_\lambda(c)^*}(z) =   \frac{1 }{\lambda (1+z)}, \]
	}
	Which from Theorem \ref{thm:BM} gives that the radial CDF of $\mu_{\pi(c)}$ is 
	\[ F_{\pi_\lambda(c)}(r) = \frac{ r^2 }{\lambda} \]
	for $r \in [0,\sqrt{\lambda}]  $.
	
	%			
	%			We then consider the random Taylor polynomial, given by \begin{equation} \label{eq:taylor}
		%				P_n(z)=\sum_{k=0}^{n}\frac{\xi_k}{k!}z^k,
		%			\end{equation}
	%where $\xi_0,\xi_1,\dots,$ are iid random variables satisfying \eqref{eq:xik}. 
	The limiting root distribution the polynomials \eqref{eq:taylor} has density $\frac{1}{2\pi|z|}$ and radial CDF $\Phi(r)=r$ for $r\in[0,1]$. It is then a simple calculation to see \editr{ from \eqref{eq:A:CDF identity} that} $\mu^{t}$ has radial CDF $\Phi_{t}(r)=\frac{r}{1-t}$ for $r\in[0,1-t]$, and is just a rescaling of $\Phi$, \editr{ by $(1-t)^{-1}$. Choosing $t = 1-\lambda$, we have that $F_{\pi_\lambda(c)}(r) = \Phi_{1-\lambda}(r^2)$, as expected from Theorem \ref{thm:A:connection general case}.}

	\subsubsection{Haar unitaries} \label{sec:Haar} We now discuss the example in Theorem \ref{thm:A:Kacs free convolution} in more detail. \editr{We begin by giving examples from \cite{MR1784419} of sums of freely independent Haar unitary elements and of the product of a free projection and a Haar unitary, whose Brown were computed by less direct methods than using \eqref{eq:Slambda}. We then discuss the Kac random polynomial.}
	
	In \cite{MR1784419}, Haagerup and Larsen consider
	\[ u^{(k)} := u_1 + u_2 + \cdots+ u_k  \]
	where $u_1, u_2, \ldots, u_k$ are $\ast$-freely independent, Haar unitary elements and show that the $S$-transform of $u^{(k)}u^{(k)*}$ is 
	\begin{equation} \label{eq:Ssumhaar} S_{u^{(k)}u^{(k)*}}(z) = \frac{ z + k }{ k^2(z+1)  },   \end{equation}
	and hence the Brown measure has radial CDF
	\begin{equation} \label{eq:sumHarr} F_{\mu_{u^{(k)}}}(r) =  (k - 1)\frac{r^2}{ k^2 - r^2 } \end{equation}
	for $r \in [0,\sqrt{k}]$.
	
	On the other hand \eqref{eq:Ssumhaar} is exactly the multiplicative factor in \eqref{eq:Slambda}, so \eqref{eq:Ssumhaar} is also the \editr{$S$-transform of $u p (u p)^*$}, where $u$ is a Haar unitary element and $p$ is a $\ast$-freely independent projection with trace $\tau(p)= \editr{k^{-1}}$. 
	
	\editr{ The Brown measure of $up$ was also considered in \cite{MR1784419}. After removing the atom of the Brown measure at $0$ it follows from their result that $\pi_{\lambda}(u)$ has radial CDF \begin{equation}\label{eq:A:compressed unitary CDF}
			F_{\pi(u)}(r)=\begin{cases}
				\frac{1-\lambda}{\lambda}\frac{r^2}{1-r^2},&\ 0\leq r\leq \sqrt{\lambda}\\
				1,&\ r\geq \sqrt{\lambda}
			\end{cases}.
		\end{equation}
		Setting $k = \lambda^{-1}$, this expression, as expected, is a dilation of \eqref{eq:sumHarr} by k.}
	%Finally, we note that using \eqref{eq:Slambda} to compute the Brown measure of $u^{(k)}$ and $up$ is more direct than in \cite{MR1784419}.}

%			The example of $up$ was also considered in \cite{MR1784419}, where it is shown that the nonzero part of the Brown measure of $up$ has radial density\footnote{Haagerup and Larson \cite{MR1784419} use a different convention of \textit{radial density}.} \begin{equation*}
	%				f_{up}(r)=\frac{2r(1-\lambda)}{(1-r^2)^2}\oindicator{\left(0,\sqrt{\lambda}\right)}(r), 
	%			\end{equation*} where $\oindicator{\left(0,\sqrt{\lambda}\right)}$ is the indicator function for the set $\left(0,\sqrt{\lambda}\right)$.  Hence, the Brown measure of $\pi(u)$ has radial density \begin{equation}
	%				g_{\pi(u)}(r)=\frac{2r(1-\lambda)}{\lambda(1-r^2)^2}\oindicator{\left(0,\sqrt{\lambda}\right)}(r), 
	%			\end{equation} and therefore $\pi(u)$ has radial CDF \begin{equation}\label{eq:A:compressed unitary CDF}
	%				F_{\pi(u)}(r)=\begin{cases}
		%					\frac{1-\lambda}{\lambda}\frac{r^2}{1-r^2},&\ 0\leq r\leq \sqrt{\lambda}\\
		%					1,&\ r\geq \sqrt{\lambda}
		%				\end{cases}.
	%			\end{equation}
%	Finally, we note that using \eqref{eq:Slambda} to compute the Brown measure of $u^{(k)}$ and $up$ is more direct than in \cite{MR1784419}.

The Brown measure of $u$ is the uniform probability measure on the unit circle in $\C$. Hence, even after applying $\sq$, the natural random polynomial to compare to is the Kac polynomial \begin{equation}
	P_n(z)=\sum_{k=1}^n \xi_kz^k,
\end{equation} where $\xi_0,\xi_1,\dots,$ are iid random variables satisfying \eqref{eq:xik}. The empirical root measure of Kac polynomials converges, see for example Theorem \ref{thm:KZ}, in probability to the uniform probability measure on the unit circle.

Let $t=1-\lambda\in(0,1)$. Feng and Yao \cite{MR3921311} established that the empirical root measure of $P_n^{\lfloor(tn)\rfloor}$ converges (see Theorem \ref{thm:feng-yao}) in probability to the measure with radial CDF\begin{equation}\label{eq:A:Kac der CDF}
	\Phi_t(r)=\begin{cases}
		\frac{t}{1-t}\frac{r}{1-r},&\ 0\leq r< 1-t\\
		1,&\ r\geq 1-t
	\end{cases}.
\end{equation} Then $\Phi_t$ is the push-forward of $F_{\pi(u)}$ under $\sq$, as given in Figure \ref{fig:A:Diagram of relationship}.%.
%the map $z\mapsto z^2$.

\subsection{Products of $R$-diagonal elements and random polynomials}\label{sec:products and polynomials}

\editr{In this section, we give an operation on random polynomials, namely coefficient-wise multiplication, which also describes products of free $R$-diagonal elements. Products of free $R$-diagonal elements are simpler than sums, so we are also able to consider elements which are free but not necessarily identically distributed.}

\begin{proposition}\label{prop:R-diag product as polynomials}
	Let \begin{equation*}
		P_n(z) := \sum_{k=0}^n \xi_k P_{k,n} z^k,
	\end{equation*} and \begin{equation*}
		Q_n(z) := \sum_{k=0}^n \xi_k Q_{k,n} z^k,
	\end{equation*} be random polynomials where $P_{k,n}$ and $Q_{k,n}$ are deterministic coefficients satisfying Assumption \ref{assump:a1} for some functions $P(z)$ and $Q(z)$ respectively, and $\xi_0, \xi_1, \ldots$ are iid non-degenerate complex-valued random variables which satisfy $\E \log(1 + |\xi_0|) < \infty$. Let $\mu_{P}$ and $\mu_{Q}$ be the limits \editr{(as given by Theorem \ref{thm:KZ})} of the empirical root measures of $P_n$ and $Q_n$ respectively. Define the random polynomial \begin{equation*}
		S_n(z):=\sum_{k=0}^n P_{k,n}Q_{k,n}\xi_kz^k.
	\end{equation*} Then $P_{k,n}Q_{k,n}$ satisfy Assumption \ref{assump:a1} with function $S(z):=P(z)Q(z)$, and the radial quantile function, $\Phi_{S}^{\langle-1\rangle}$, of the \editr{(again as given by Theorem \ref{thm:KZ})} limiting empirical root measure, $\mu_S$, is given by \begin{equation}
		\Phi_{S}^{\langle-1\rangle}(x)=\Phi_{P}^{\langle-1\rangle}(x)\Phi_{Q}^{\langle-1\rangle}(x),
	\end{equation} where $\Phi_{P}^{\langle-1\rangle}$ and $\Phi_{Q}^{\langle-1\rangle}$ are the radial quantile functions of $\mu_{P}$ and $\mu_{Q}$, respectively. Moreover, if $x$ and $y$ are $\ast$-free $R$-diagonal elements such that $\mu_x=\sq^{-1}\mu_P$ and $\mu_y=\sq^{-1}\mu_Q$, then $\mu_{xy}=\sq^{-1}\mu_S$.
\end{proposition}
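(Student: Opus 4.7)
The plan is to verify the three parts in order. For the first claim, Assumption \ref{assump:a1} for the product $P_{k,n}Q_{k,n}$ with profile $S(t) := P(t)Q(t)$ follows directly from the hypotheses: positivity of $S$ on $[0,1)$, vanishing on $(1,\infty)$, and the required continuity are all inherited from $P$ and $Q$, while the uniform convergence condition comes from the splitting $|P_{k,n}Q_{k,n}|^{1/n} - P(k/n)Q(k/n) = (|P_{k,n}|^{1/n} - P(k/n))|Q_{k,n}|^{1/n} + P(k/n)(|Q_{k,n}|^{1/n} - Q(k/n))$ together with the uniform boundedness of $|Q_{k,n}|^{1/n}$, which is guaranteed by its uniform convergence to the bounded function $Q$. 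Having verified Assumption \ref{assump:a1}, Theorem \ref{thm:KZ} produces the limit $\mu_S$ of the empirical root measures of $S_n$ and identifies its radial CDF.

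For the radial quantile identity in the second claim, the strategy is to pass through the Legendre--Fenchel transform. Write $u_P := -\log P$, $u_Q := -\log Q$, and $u_S := u_P + u_Q$, with corresponding transforms $I_P$, $I_Q$, $I_S$. Theorem \ref{thm:KZ} gives $\Phi_P(r) = I_P'(\log r)$, and likewise for $Q$ and $S$. The duality $I' = (u')^{\langle -1 \rangle}$ (interpreted via generalized inverses when $u$ fails to be strictly convex) says that at the optimizing $t = \Phi_P(r)$ one has $u_P'(t) = \log r$, which in quantile form reads $\log \Phi_P^{\langle -1 \rangle}(s) = u_P'(s)$ for $s \in (0,1)$, with analogous identities for $Q$ and $S$. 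Since $u_S' = u_P' + u_Q'$, exponentiating yields $\Phi_S^{\langle -1 \rangle}(s) = \Phi_P^{\langle -1 \rangle}(s)\Phi_Q^{\langle -1 \rangle}(s)$. The main technical obstacle here is rigorously handling the case when $u_P$ or $u_Q$ is not strictly convex or differentiable; the cleanest route is to reformulate the identity on the level of subdifferentials, or equivalently to work directly with left-continuous quantile inverses using Appendix \ref{sec:quantile}.

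For the operator statement, note that $xy$ is $R$-diagonal whenever $x, y$ are freely independent $R$-diagonal (Section \ref{sec:Rdiag}). By traciality, $\tau(((xy)(xy)^*)^n) = \tau((x^*x \cdot yy^*)^n)$, so the self-adjoint operators $(xy)(xy)^*$ and $x^*x \cdot yy^*$ have the same distribution; since $x^*x$ and $yy^*$ are free positive elements, multiplicativity of the $S$-transform for free positive variables together with traciality ($\mathscr{S}_{x^*x} = \mathscr{S}_{xx^*}$) gives $\mathscr{S}_{(xy)(xy)^*}(z) = \mathscr{S}_{xx^*}(z)\mathscr{S}_{yy^*}(z)$. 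Theorem \ref{thm:BM} expresses the radial quantile of the Brown measure of an $R$-diagonal element $a$ as $(\mathscr{S}_{aa^*}(s-1))^{-1/2}$, and the identification $\mu_x = \sq^{-1}\mu_P$ translates to $F_x^{\langle -1 \rangle}(s) = \sqrt{\Phi_P^{\langle -1 \rangle}(s)}$, with the analogous formula for $y$. Combining these with the second claim yields that the radial quantile of $\mu_{xy}$ equals $\sqrt{\Phi_P^{\langle -1 \rangle}(s)\Phi_Q^{\langle -1 \rangle}(s)} = \sqrt{\Phi_S^{\langle -1 \rangle}(s)}$, which is precisely the radial quantile of $\sq^{-1}\mu_S$, closing the argument.
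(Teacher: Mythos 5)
Your proposal is correct and follows essentially the same route as the paper: the same splitting argument to verify Assumption \ref{assump:a1} for the product coefficients, the same Legendre--Fenchel duality $[I']^{\langle -1\rangle}=u'$ (with convexity handled via biconjugation/generalized inverses) to obtain the multiplicativity of the radial quantile functions, and the same $S$-transform factorization $\mathscr{S}_{(xy)(xy)^*}=\mathscr{S}_{xx^*}\mathscr{S}_{yy^*}$ combined with Theorem \ref{thm:BM} and the $\sq$ correspondence for the operator statement. The only cosmetic difference is that you derive the $S$-transform factorization directly from traciality and freeness of $x^*x$ and $yy^*$, whereas the paper cites Haagerup--Larsen for it.
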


\begin{proof}
	It is immediate to see $S$ satisfies points \ref{assump:a1 p is non neg} and \ref{assump:a1 p is continuous} of Assumption \ref{assump:a1}. For \ref{assump:a1 coefficients converge to p}, note \begin{align*}
		\lim_{n \to \infty} \sup_{0 \leq k \leq n} &\left| |P_{k,n}Q_{k,n}|^{1/n} - P\left( \frac{k}{n}\right)Q\left(\frac{k}{n}\right) \right| \\
		&\leq\lim_{n \to \infty} \sup_{0 \leq k \leq n} \Bigg[ \left| |P_{k,n}Q_{k,n}|^{1/n} - P\left( \frac{k}{n}\right)|Q_{k,n}|^{1/n} \right| \\
		&\qquad+\left| P\left(\frac{k}{n}\right)|Q_{k,n}|^{1/n} - P\left( \frac{k}{n}\right)Q\left(\frac{k}{n}\right) \right| \Bigg] \\
		&=0, 
	\end{align*} where the last equality follows from the continuity, and hence boundedness, of $P$ and $Q$. It remains to show the quantile function $\Phi^{\langle-1\rangle}_{S}$  factors as the radial quantile functions $\Phi^{\langle-1\rangle}_{P}$ and $\Phi^{\langle-1\rangle}_{Q}$. Let $I: \mathbb{R} \to \mathbb{R} \cup \{+\infty\}$ be the Legendre--Fenchel transform of $u(x) = -\log S(x)$, then \begin{equation}\label{eq:Phi_s def}
		\Phi_S(r)=I'(\log r), \quad r>0.
	\end{equation} We will assume $u$, $-\log P$, and $-\log Q$ are convex functions. Otherwise, we can \editr{ take the Legendre--Fenchel transformation twice to get new functions $\widetilde{u}$, $\widetilde{-\log P}$, and $\widetilde{-\log Q}$  which are convex.} As the Legendre--Fenchel transform is an involution on convex functions, this change has no affect on $\Phi_{S}$, $\Phi_{P}$, or $\Phi_{Q}$. To consider quantile functions, we note from general properties of Legendre--Fenchel transforms that $[I']^{\langle-1\rangle}=u'$ (see for example \cite{MR0274683}, specifically Corollary 23.5.1). Taking (generalized) inverses in \eqref{eq:Phi_s def}\begin{align*}
		\Phi^{\langle-1\rangle}_{S}(x)&=\exp\left([I']^{-1}(x)\right)=\exp\left(u'(x)\right)=\exp\left(-\frac{d}{dx}\log P(x)-\frac{d}{dx}\log Q(x)\right)\\
		&=\Phi_{P}^{\langle-1\rangle}(x)\Phi_{Q}^{\langle-1\rangle}(x).
	\end{align*} 
	\editr{To compute the Brown measure of $xy$ we use the relationship $\mathscr{S}_{xyy^*x^*} = \mathscr{S}_{xx^*}\mathscr{S}_{yy^*}$ for any $\ast$-free R-diagonal elements, \editr{this relationship follows by taking the $S$-transform of Proposition 3.6(ii) in \cite{MR1784419}}.}
	%As discussed in the proof of Proposition \ref{prop:commutator}, $\mathscr{S}_{xyy^*x^*} = \mathscr{S}_{xx^*}\mathscr{S}_{yy^*}$. 
	Hence, from \eqref{eq:Brownmeasure} the radial quantile function of $\mu_{xy}$ is $F^{\langle-1\rangle}_{xy}=F^{\langle-1\rangle}_{x}F^{\langle-1\rangle}_{y}$. This completes the proof of the final statement by noting $\Phi_{P}^{\langle-1\rangle}=[F^{\langle-1\rangle}_{x}]^2$ and $\Phi_{Q}^{\langle-1\rangle}=[F^{\langle-1\rangle}_{y}]^2$. 
\end{proof}
%The Brown measure of $xy$ (\editr{and thus $ab$, as $ab$ and $xy$ have the same Brown measure}) is then computed by using the relationship $\mathscr{S}_{xyy^*x^*} = \mathscr{S}_{xx^*}\mathscr{S}_{yy^*}$ for any $\ast$-free R-diagonal elements, \editr{this relationship follows by taking the $S$-transform of Proposition 3.6(ii) in \cite{MR1784419}}. 
\subsection{Commutator of $R$-diagonal elements}\label{sec:commutator}

In this section, \editr{ we combine the last section with Proposition \ref{prop:brownmeasuresum} and }consider the (anti-)commutator of two free $R$-diagonal elements $x, y$. At the end of the section, we specialize to the case that $x$ and $y$ are both circular elements. 

\begin{proposition}\label{prop:commutator}
	Let $x$ and $y$ be free $R$-diagonal elements. Let $\lambda_2^x,\lambda_2^y$ be as in  Theorem \ref{thm:BM}, for $x$ and $y$, respectively, and let $ \lambda_2 = \sqrt{2} \lambda_2^x \lambda_2^y $ and 
	\[ \mathscr{S}_{comm}(z) = \frac{2 + z}{ 4(1+z)} \mathscr{S}_{x^* x}(z/2) \mathscr{S}_{y^* y} (z/2) .\] Then the Brown measure of $xy \pm yx$ is given by:
	\[ \mu_{xy \pm yx}( \D_r)   = \begin{cases} 1 + \mathscr{S}_{comm}^{\langle -1 \rangle}(r^{-2})  & \text{ if }  0 < r < \lambda_2     \\ 1 & \text{ if } r \geq \lambda_2   \end{cases}.   \]
\end{proposition}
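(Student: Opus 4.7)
The plan is to reduce the Brown-measure computation to Proposition \ref{prop:brownmeasuresum} applied with $a = xy$ and $k = 2$, combined with the $S$-transform factorization from Proposition \ref{prop:R-diag product as polynomials}. Specifically, if one can show that $xy + yx$ has the same Brown measure as the sum $(xy)_1 + (xy)_2$ of two $\ast$-freely independent copies of $xy$, then its Brown measure equals $\mu_{xy}^{\rdplus 2}$, whose $S$-transform is given by \eqref{eq:Slambda} as
\[ \frac{1+z/2}{2(1+z)}\,\mathscr{S}_{xy(xy)^*}(z/2). \]
Using the identity $\mathscr{S}_{xy(xy)^*}(w) = \mathscr{S}_{xx^*}(w)\mathscr{S}_{yy^*}(w)$ from the proof of Proposition \ref{prop:R-diag product as polynomials}, together with the elementary simplification $\tfrac{1+z/2}{2(1+z)} = \tfrac{2+z}{4(1+z)}$, this yields exactly $\mathscr{S}_{comm}(z)$. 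The outer radius $\lambda_2 = \sqrt{2}\,\lambda_2^{xy}$ follows from Proposition \ref{prop:oplus prop}, together with $(\lambda_2^{xy})^2 = \tau(xyy^*x^*) = \tau(xx^*)\tau(yy^*) = (\lambda_2^x \lambda_2^y)^2$ by traciality and $\ast$-freeness.

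Two preparatory reductions are straightforward. First, since $R$-diagonality renders the $\ast$-distribution invariant under multiplication by any unit complex scalar, in particular $-1$, the element $-yx$ has the same $\ast$-distribution as $yx$ and remains $\ast$-free from $xy$; hence $xy - yx$ has the same Brown measure as $xy + yx$, so I may restrict attention to the $+$ case. Second, $xy + yx$ is itself $R$-diagonal. Writing $e^{i\theta}(xy+yx) = (e^{i\theta/2}x)(e^{i\theta/2}y) + (e^{i\theta/2}y)(e^{i\theta/2}x)$ and invoking the rotation characterization of $R$-diagonality, it suffices to check that the joint $\ast$-distribution of $(e^{i\theta/2}x, e^{i\theta/2}y)$ agrees with that of $(x,y)$; this is immediate from the $R$-diagonality of $x$ and $y$ individually, combined with preservation of $\ast$-freeness under one-variable rotations.

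The main obstacle is the moment identification from the first paragraph: proving $\mu_{xy+yx} = \mu_{xy}^{\rdplus 2}$. Since both elements are $R$-diagonal, their Brown measures are determined by the distribution of the radial part, so it suffices to show that $(xy+yx)(xy+yx)^*$ has the same distribution as $((xy)_1 + (xy)_2)((xy)_1 + (xy)_2)^*$. Expanding
\[ (xy+yx)(xy+yx)^* = xyy^*x^* + yxx^*y^* + xyx^*y^* + yxy^*x^*, \]
the moments are computed through the moment--cumulant formula in the letters $x, x^*, y, y^*$. By $\ast$-freeness of $x$ and $y$ the mixed free cumulants in these letters vanish, and by the $R$-diagonality of each variable only the alternating cumulants $\kappa_{2n}(x,x^*,\ldots)$ and $\kappa_{2n}(y,y^*,\ldots)$ survive. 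A combinatorial analysis of the resulting non-crossing diagrams then shows that contributions from the ``cross'' monomials $xyx^*y^*$ and $yxy^*x^*$ combine with those from the diagonal monomials $xyy^*x^*$ and $yxx^*y^*$ to reproduce precisely the diagrammatic expansion for $((xy)_1+(xy)_2)((xy)_1+(xy)_2)^*$. Equivalently, the free cumulants of $xy+yx$ equal twice those of $xy$, as though $xy$ and $yx$ were $\ast$-freely independent. With this identification established, the $S$-transform and radius formulas assemble as outlined above.
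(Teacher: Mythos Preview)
Your overall architecture matches the paper's: reduce to showing that $xy\pm yx$ has the same Brown measure as two $\ast$-free copies of $xy$, then apply Proposition~\ref{prop:brownmeasuresum} with $k=2$ and the factorization $\mathscr{S}_{xy(xy)^*}=\mathscr{S}_{xx^*}\mathscr{S}_{yy^*}$. Your preparatory reductions (the $\pm$ equivalence, $R$-diagonality of $xy+yx$) and the final $S$-transform and outer-radius computations are correct.

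The gap is at the heart of the argument. You assert that ``a combinatorial analysis of the resulting non-crossing diagrams then shows'' that the moments of $(xy+yx)(xy+yx)^*$ match those of $((xy)_1+(xy)_2)((xy)_1+(xy)_2)^*$, i.e.\ that the mixed free cumulants between $xy$ and $yx$ vanish. This is the entire content of the proposition, and you have not carried out that analysis; a direct non-crossing-partition verification that $xy$ and $yx$ are $\ast$-free is genuinely non-trivial, and nothing in your expansion of $(xy+yx)(xy+yx)^*$ into four monomials explains why the cross terms contribute exactly what freeness would predict.

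The paper bypasses this combinatorics with a short Haar-unitary trick: introduce a Haar unitary $u$ that is $\ast$-free from $\{x,y\}$. Since $x,y$ are $R$-diagonal, $(ux,\,yu^*)$ has the same joint $\ast$-distribution as $(x,y)$, so $xy-yx$ has the same $\ast$-distribution as $(ux)(yu^*)-(yu^*)(ux)=u\,xy\,u^*-yx$. Now $u\,xy\,u^*$ is $\ast$-free from $yx$ because conjugation by a free Haar unitary produces freeness from the original algebra (Exercise~5.24 in Nica--Speicher), and both $u\,xy\,u^*$ and $yx$ have the $\ast$-distribution of $xy$. This immediately realizes $xy-yx$ as a sum of two $\ast$-free, identically distributed $R$-diagonal elements, so Proposition~\ref{prop:brownmeasuresum} applies directly. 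In particular, the paper's argument \emph{proves} the very fact you asserted---that $xy$ and $yx$ are $\ast$-free---without any diagram chasing.
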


\begin{proof}
	We prove the statement for the commutator, the anti-commutator is completely analogous. Our main goal in this proof, is to rewrite $xy -yx$ as the sum of two $\ast$-free, identically distributed \editr{$R$-diagonal} elements, \editr{we will do this through a series of reductions}.
	We begin with a standard trick when working with $R$-diagonal elements and introduce a new Haar unitary $u$, that is $\ast$-freely independent from $x$ and $y$. Then, because $x$ and $y$ are $R$-diagonal, we have that $(ux, yu^*)$ has the same $\ast$-distribution as $(x,y)$, and thus we can instead consider the Brown measure of 
	\[uxyu^* - yu^* u x = uxyu^* - yx.\]
	Furthermore, $uxyu^*$ and $y x$ are $\ast$-freely independent (see for example, Exercise 5.24 \cite{MR2266879}), so we can introduce two more $R$-diagonal elements \editr{$a,b$ such that $(a,b)$} has the same $\ast$-distribution as $(x,y)$\editr{, and then compute the Brown measure of $ab - yx$ } Then, because $yx$ and $-yx$ have the same $\ast$-distributions, we can consider the Brown measure of $ab + yx$. \editr{ The Brown measure of $yx$, and thus $ab$, as $ab$ and $yx$ have the same Brown measure}, was given in Proposition \ref{prop:R-diag product as polynomials}.
	%The Brown measure of $xy$ (\editr{and thus $ab$, as $ab$ and $xy$ have the same Brown measure}) is then computed by using the relationship $\mathscr{S}_{xyy^*x^*} = \mathscr{S}_{xx^*}\mathscr{S}_{yy^*}$ for any $\ast$-free R-diagonal elements, \editr{this relationship follows by taking the $S$-transform of Proposition 3.6(ii) in \cite{MR1784419}}. 
	Then since the $\ast$-distribution of $yx$ and $ab$ are the same, we can apply Proposition \ref{prop:brownmeasuresum} with $k=2$ to complete the proof.
\end{proof} 

Before specializing to commutators of circular elements, we give a polynomial interpretation of the \editr{commutator} of general $R$-diagonal elements. \editr{Let $P_n$, $Q_n$, $x$, and $y$ be exactly as in Proposition \ref{prop:R-diag product as polynomials}. Define the random polynomial \begin{equation*}
		C_n(z)=\sum_{j=0}^{\lceil n/2 \rceil}\xi_{j+\lfloor n/2 \rfloor}P_{j+\lfloor n/2 \rfloor,n}Q_{j+\lfloor n/2 \rfloor,n}\frac{(j+\lfloor n/2 \rfloor)!}{\lfloor n/2 \rfloor!4^j}z^j.
	\end{equation*} One can check using Theorem \ref{thm:A:connection general case}, Proposition \ref{prop:R-diag product as polynomials}, and the proof of Proposition \ref{prop:commutator} that $\mu_{xy\pm yx}=\sq^{-1}\mu_{C}$. }

We now consider the Brown measure of the commutator of two $\ast$-free circular elements. We note that this model was considered in \cite{MR4492979}, where it is shown that the empirical spectral distribution of any quadratic polynomial in independent Ginibre random matrices converges to the Brown measure of the corresponding polynomial in $\ast$-free circular elements, but the Brown measure was not computed. 

Since $\mathscr{S}_{xx^*}(z) = \mathscr{S}_{yy^*}(z) = \frac{1}{1+ z}$, we have that 
\[ \mathcal{S}_{comm}(z) = \frac{2 + z}{ 4(1+z)} \frac{ 4  }{(2 + z)^2 } = \frac{ 1 }{(1+z)(2+ z)}. \]
Proposition \ref{prop:commutator} then gives that the radial CDF of the Brown measure of $xy - yx$ is 
\[ \mu_{xy -yx}(\D_r) = 1  + \frac{-3 + \sqrt{1 + 4 r^2}}{ 2}  =   \frac{-1 + \sqrt{1 + 4 r^2}}{ 2}   \]
for $r \in (0,\sqrt{2})$.

\subsection{\editr{The limit of the differentiation flow} }
\label{sec:limit_roots}
In this section we consider the limiting behavior of polynomial roots as the proportion of derivatives to the degree approaches one. With Theorem \ref{thm:A:connection general case} connecting repeated differentiation of random polynomials to sums of free random variables, it is natural to consider distributions which are stable under the differentiation flow defined by \eqref{eq:A:CDF identity} and serve as central limits for the convolution.  For $\alpha \in (0,2]$, let $\mu_\alpha\in \mathcal{RP}_p(\C)$ be the measure with radial CDF $\Phi_{0,\alpha}$ such that $\Phi_{0,\alpha}^{\langle-1\rangle}(x)=\frac{x}{(1-x)^{\frac{2}{\alpha}-1}}$. It is then easy to check (recall \eqref{eq:A:CDF identity}) that \begin{equation*}
	\Phi_{t,\alpha}^{\langle -1\rangle}(x)=(1-t)^{2-\frac{2}{\alpha}}\Phi_{0,\alpha}^{\langle-1\rangle}(x),
\end{equation*} for all $x,t\in(0,1)$. Hence, $\mu_\alpha^{t}$ is $\mu_\alpha$, up to a $t$-dependent rescaling of the support, and we refer to $\mu_\alpha$ as \textbf{stable} under the differentiation flow. \editr{Of course, scalar dilations of stable laws are also stable. So we provide the following general definition of differentiation stable. \begin{definition}
		Let $\mu$ be a rotationally invariant probability measure with invertible radial CDF $\Phi$. $\mu$ is said to be $\alpha$-differentiation stable for $\alpha\in (0,2]$ if there exists some $\theta>0$ such that\begin{equation}
			\Phi^{\langle-1\rangle}(x)=\theta\frac{x}{(1-x)^{\frac{2}{\alpha}-1}},
		\end{equation} for all $x\in[0,1)$. 
	\end{definition}
	As the following proposition demonstrates this definition of $\alpha$-differentiation stable is consistent with the already existing notion of $\alpha$-$\oplus$ stable and Theorem \ref{thm:A:connection general case}. \begin{proposition}
		A radially symmetric probability measure $\mu$ is $\alpha$-differentiation stable if and only if $\sq^{-1}\mu$ is $\alpha$-$\oplus$ stable.
	\end{proposition}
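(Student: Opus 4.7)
The plan is to translate the $S$-transform characterization of $\alpha$-$\oplus$ stability from Proposition \ref{prop:stablelaw} into a condition on the radial quantile function of $\mu$, and then read off that this coincides exactly with the defining condition for $\alpha$-differentiation stability. The proof reduces to an algebraic identity connecting \eqref{eq:Brownmeasure}, the $\sq$ push-forward, and the stable-law form \eqref{eq:stableS}.

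First, suppose $\sq^{-1}\mu$ is $\alpha$-$\oplus$ stable; write $\sq^{-1}\mu = \mu_a$ for some $R$-diagonal element $a$. By Proposition \ref{prop:stablelaw}, $\mathscr{S}_{aa^*}(z) = \theta (-z)^{2/\alpha-1}/(1+z)$ for some $\theta > 0$. Applying \eqref{eq:Brownmeasure}, the radial CDF of $\mu_a$ satisfies $F_a(r) = 1 + \mathscr{S}_{aa^*}^{\langle -1 \rangle}(r^{-2})$ on its support, which after inversion gives $F_a^{\langle -1 \rangle}(x) = \mathscr{S}_{aa^*}(x-1)^{-1/2}$ for $x \in (0,1)$. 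Substituting the stable form yields
\[
F_a^{\langle -1 \rangle}(x) \;=\; \theta^{-1/2}\, x^{1/2}\, (1-x)^{1/2 - 1/\alpha}.
\]

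Next, since $\mu = \sq\mu_a$, the radial CDF $\Phi$ of $\mu$ satisfies $\Phi(r) = F_a(\sqrt{r})$, so its quantile function is $\Phi^{\langle -1 \rangle}(x) = (F_a^{\langle -1 \rangle}(x))^2$. Combining with the previous display,
\[
\Phi^{\langle -1 \rangle}(x) \;=\; \theta^{-1}\, \frac{x}{(1-x)^{2/\alpha - 1}}.
\]
Setting $\theta' = \theta^{-1}$, this is precisely the condition for $\mu$ to be $\alpha$-differentiation stable.

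The converse runs the chain of equalities backward: starting from the assumed form $\Phi^{\langle -1 \rangle}(x) = \theta' x/(1-x)^{2/\alpha-1}$, take square roots to recover $F_a^{\langle -1 \rangle}$, invert to obtain $F_a$, then invert once more using \eqref{eq:Brownmeasure} to recover $\mathscr{S}_{aa^*}$, which takes the stable form \eqref{eq:stableS} required by Proposition \ref{prop:stablelaw}. There is no substantive obstacle; the only point requiring care is bookkeeping of the exponents through the square root (arising from $\sq$) and through the $r \mapsto r^{-2}$ substitution in \eqref{eq:Brownmeasure}, which together convert the exponent $2/\alpha - 1$ on $(1-x)$ in the differentiation-stable definition into the same exponent on $(-z)$ in the $S$-transform definition.
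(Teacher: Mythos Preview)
Your proof is correct and follows the same route as the paper, which simply states that the result is ``an immediate consequence of Theorem \ref{thm:BM} and Proposition \ref{prop:stablelaw}.'' You have spelled out in detail exactly the chain of identities those two results entail: the $S$-transform form \eqref{eq:stableS} is converted via \eqref{eq:Brownmeasure} into the radial quantile function of the Brown measure, and then the $\sq$ push-forward squares this to recover the differentiation-stable quantile form.
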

	\begin{proof}
		The proof is an immediate consequence of Theorem \ref{thm:BM} and Proposition \ref{prop:stablelaw}.
\end{proof}}

For simplicity of presentation we consider initial root distributions with power law tail decay and compact support/super-polynomial tail decay separately in Theorem \ref{conjecture:general clt} and Corollary \ref{thm:A:Derivative CLT} respectively. The proofs however are nearly identical, with the power law decay requiring only a short extra initial discussion on regularly varying functions\footnote{See, for example, \cite{MR2364939} for background on regularly varying and slowly varying functions.}.

\begin{theorem}[Limit of repeated differentiation]\label{conjecture:general clt}
	Let $\mu \in \mathcal{RP}_p(\C)$ with unbounded support and radial CDF $\Phi_0$ such that\begin{equation}\label{eq:A:tail assumption}
		\lim\limits_{r\rightarrow \infty}\frac{1-\Phi_0(r)}{L(r)r^{-\frac{\alpha}{2-\alpha}}} =1,
	\end{equation} for some $\alpha\in (0,2)$ and some positive slowly varying function $L$. Let $\{\Phi_t\}_{t\in[0,1)}$ be the family defined by \eqref{eq:A:CDF identity}. Then, there exists a positive slowly varying function $g:[1,\infty)\rightarrow[1,\infty)$ such that the radially symmetric probability measures $\widetilde{\mu}^t$ with radial CDF $\widetilde{\Phi}_t(x)=\Phi_t\left(g((1-t)^{-1})(1-t)^{2-\frac{2}{\alpha}}x \right)$ converges weakly to the probability measure $\mu_{\alpha}$ with radial quantile function  $\Phi_{0,\alpha}^{\langle-1\rangle}(x)=\frac{x}{(1-x)^{\frac{2}{\alpha}-1}}$ as $t \rightarrow 1^{-}$.
\end{theorem}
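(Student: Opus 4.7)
The plan is to work at the level of radial quantile functions, reducing the claimed weak convergence to pointwise convergence of quantile functions on $(0,1)$ (using Lemma \ref{lemma:quantile facts}), and then to apply the classical inversion theory for regularly varying functions to understand $\Phi_0^{\langle -1\rangle}$ near $1$. Setting $\lambda := 1 - t$, identity \eqref{eq:A:CDF identity} gives
\[ \widetilde{\Phi}_t^{\langle -1\rangle}(x) \;=\; \frac{1}{g(\lambda^{-1})\,\lambda^{2-2/\alpha}} \cdot \frac{x\lambda\,\Phi_0^{\langle -1\rangle}\!\left(1 - \lambda(1-x)\right)}{x\lambda + 1 - \lambda} \]
for $x \in (0,1)$. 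Since $x\lambda + 1 - \lambda \to 1$ as $\lambda \to 0^{+}$, the crux of the matter is the asymptotic behavior of $\Phi_0^{\langle -1\rangle}(1 - s)$ as $s \to 0^{+}$.

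Next, I would invert the tail hypothesis. Writing $\beta = \alpha/(2-\alpha)$, assumption \eqref{eq:A:tail assumption} states that $r \mapsto 1 - \Phi_0(r)$ is regularly varying at infinity with index $-\beta$. The inversion theorem for regularly varying functions (see \cite{MR2364939}) applied to the generalized inverse then yields a slowly varying function $\tilde{L}$ on a neighborhood of $0$ such that
\[ \Phi_0^{\langle -1\rangle}(1 - s) \;\sim\; \tilde{L}(s)\, s^{\,1 - 2/\alpha} \qquad \text{as } s \to 0^{+}, \]
since $-1/\beta = 1 - 2/\alpha$.

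Defining $g(u) := \tilde{L}(1/u)$ for $u \geq 1$, which is slowly varying at infinity, and substituting the above asymptotic into the expression for $\widetilde{\Phi}_t^{\langle -1\rangle}(x)$, a direct simplification produces
\[ \widetilde{\Phi}_t^{\langle -1\rangle}(x) \;\sim\; \frac{x\,(1-x)^{1-2/\alpha}}{x\lambda + 1 - \lambda}\cdot\frac{\tilde{L}(\lambda(1-x))}{\tilde{L}(\lambda)}. \]
For each fixed $x \in (0,1)$, the uniform convergence theorem for slowly varying functions gives $\tilde{L}(\lambda(1-x))/\tilde{L}(\lambda) \to 1$ as $\lambda \to 0^{+}$, and trivially $x\lambda + 1 - \lambda \to 1$, so the right side converges to $x/(1-x)^{2/\alpha - 1} = \Phi_{0,\alpha}^{\langle -1\rangle}(x)$. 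Since this limit is continuous on $(0,1)$, pointwise convergence of the quantile functions on $(0,1)$ transfers to weak convergence of $\widetilde{\mu}^{t}$ to $\mu_\alpha$ by the standard equivalence recorded in Lemma \ref{lemma:quantile facts}.

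The principal technical subtlety is the inversion step: because $\Phi_0$ need not be continuous or strictly increasing, passing from the tail asymptotic for $1 - \Phi_0$ to an asymptotic for the generalized inverse $\Phi_0^{\langle -1\rangle}(1 - \cdot)$ requires the generalized-inverse form of the Karamata inversion theorem rather than a naive functional inverse. Everything else is bookkeeping: algebraic manipulation of \eqref{eq:A:CDF identity}, the definition $g(u) = \tilde{L}(1/u)$, and the pointwise uniform convergence property of slowly varying functions. No uniformity in $x \in (0,1)$ is needed, because the limit quantile is continuous on the entire open interval and weak convergence is automatic from pointwise convergence of quantile functions at all continuity points of the limit.
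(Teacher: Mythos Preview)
Your proposal is correct and follows essentially the same route as the paper: invert the regular-variation hypothesis on the tail of $\Phi_0$ to obtain the asymptotic $\Phi_0^{\langle -1\rangle}(1-s)\sim \tilde L(s)\,s^{1-2/\alpha}$, define the scaling function $g$ from the slowly varying part, substitute into \eqref{eq:A:CDF identity}, and pass to the limit using the slowly varying property. The only slip is that the reference you cite for the final implication (pointwise convergence of quantiles $\Rightarrow$ weak convergence) should be Lemma~\ref{lemma:quntile convergence}, not Lemma~\ref{lemma:quantile facts}.
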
 
To understand the rescaling in Theorem \ref{conjecture:general clt} it is helpful to rewrite $(1-t)^{2-\frac{2}{\alpha}}$ as $(1-t)(1-t)^{-(\frac{2}{\alpha}-1)}$. The $(1-t)^{-(\frac{2}{\alpha}-1)}$ term is to manage the tail decay of the measure, as described by \eqref{eq:A:tail assumption}. The $(1-t)$ term corrects for the natural flow inward of polynomial roots under differentiation, as described by the Gauss-Lucas theorem.  

Before the proof of Theorem \ref{conjecture:general clt} we give a brief discussion of how it may be interpreted as some type of central limit theorem. For simplicity assume $g$ is the constant function $g(x)=1$ First, let $x_1,x_2,\dots$ be some sequence of freely independent identically distributed $R$-diagonal elements such that $\mu_{x_1}=\sq^{-1}\mu$. If $t=1-\frac{1}{k}$ for some natural number $k$, then from Theorem \ref{thm:A:connection general case} we see that $\sq^{-1}\widetilde{\mu}^t$ is \editr{the Brown measure of \[\frac{x_1+\cdots+x_k}{k^{\frac{1}{\alpha}}}.\] } Thus, taking the $t\rightarrow 1^{-}$ limit of $\widetilde{\mu}^t$ is essentially the same as taking the $k\rightarrow\infty$ limit of $\frac{x_1+\cdots+x_k}{k^{\frac{1}{\alpha}}}$. In this way Theorem \ref{conjecture:general clt} is essentially a generalized central limit theorem for $R$-diagonal elements translated, through Theorem \ref{thm:A:connection general case}, to the language of repeated differentiation of random polynomials.

\begin{proof}[Proof of Theorem \ref{conjecture:general clt}]
	From \eqref{eq:A:tail assumption} we have that $\frac{1}{1-\Phi_0}$ is $\frac{\alpha}{2-\alpha}$-varying. Thus, see Resnick \cite{MR2364939} Proposition 0.8, the function $y\mapsto \Phi_0^{\langle-1\rangle}\left(1-\frac{1}{y} \right)$ is $\frac{2-\alpha}{\alpha}$-varying in $y$. Thus there exists a positive slowly varying function $g:[1,\infty)\rightarrow(0,\infty)$ such that $\Phi_0^{\langle-1\rangle}\left(1-\frac{1}{y} \right)\sim g(y)y^{\frac{2-\alpha}{\alpha}}$ as $y\rightarrow\infty$. Defining the function $f(t)=g\left(\frac{1}{1-t}\right)^{-1}$, it is then straightforward to check that \begin{equation}\label{eq:A:inverse tail}
		\lim\limits_{x\rightarrow1^{-}}f(x)(1-x)^{\frac{2}{\alpha}-1}\Phi_{0}^{\langle-1\rangle}=1,\quad\text{and}\quad	\lim_{t \rightarrow 1^-}\frac{f(t)}{f((1-t)x+t)}=1 
	\end{equation} for every $x\in(0,1)$.
	
	Fix $x\in(0,1)$.  We have \begin{align*}
		\lim_{t \rightarrow 1^-}\widetilde{\Phi}_{t}^{\langle -1\rangle}(x)&=\lim_{t \rightarrow 1^-}f(t)(1-t)^{-(2-\frac{2}{\alpha})}\Phi_{t}^{\langle-1\rangle}(x)\\
		&=\lim_{t \rightarrow 1^-}\frac{x}{(1-t)x+t}f(t)(1-t)^{\frac{2}{\alpha}-1}\Phi_0^{\langle-1\rangle}((1-t)x+t)\\
		&=\lim_{t \rightarrow 1^-}\frac{x}{(1-t)x+t}\cdot\frac{f(t)(1-t)^{\frac{2}{\alpha}-1}}{f((1-t)x+t)(1-((1-t)x+t))^{\frac{2}{\alpha}-1}}\\
		&=\frac{x}{(1-x)^{\frac{2}{\alpha}-1}},
	\end{align*} where the third equality follows from \eqref{eq:A:inverse tail}. Hence $\widetilde{\Phi}_t^{\langle -1\rangle}$ converges to $\Phi_{0,\alpha}^{\langle -1 \rangle}$ pointwise on $(0,1)$. It then follows from Lemma \ref{lemma:quntile convergence} that $\widetilde{\Phi}_t$ converges to $\Phi_{0,\alpha}$ pointwise on $(0,\infty)$ as $t\rightarrow 1^-$. This completes the proof. 
\end{proof} 	
%The connection established in Section \ref{sec:connection} between sums of free random variables and the differentiation flow gives the natural interpretation of Theorem \ref{conjecture:general clt} as a generalized central limit theorem. An intuitive understanding of Theorem \ref{conjecture:general clt} directly from random polynomials is less clear. One interpretation is that the tail of the limiting root measure depends on the very high degree coefficients. These are also the coefficients which survive a very high number of derivatives. Hence, two limiting roots measure with similar tails must have similar large degree coefficients, and small differences in the coefficients become negligible under repeated differentiation.

The following is stated as a corollary of Theorem \ref{conjecture:general clt} with $\alpha=2$, i.e., measures with compact support or super-polynomial tail decay. In these cases \eqref{eq:A:tail assumption} is ill-defined, however looking at the proof it is still true that $y\mapsto \Phi_0^{\langle-1\rangle}\left(1-\frac{1}{y} \right)$ is $0$-varying in $y$. Hence, the proof of Corollary \ref{thm:A:Derivative CLT} is identical to the proof of Theorem \ref{conjecture:general clt} beginning from the second line. For simplicity we state it only for measures with compact support. It is worth noting the limit is $\sq$ applied to the uniform distribution on the unit disk in the complex plane, i.e., the Circular Law, one of the most important measures in non-Hermitian free probability theory. 

\begin{corollary}\label{thm:A:Derivative CLT}
	Let $\mu$ be a probability measure in $\mathcal{RP}_p(\C)$ with radial cumulative distributed function $\Phi_{0}$ such that $\inf\{x\geq0:\Phi_0(x)=1\}=1$. Then \begin{equation}\label{eq:A:limit CDF result}
		\lim_{t \rightarrow 1^-}\Phi_t((1-t)r)=r,
	\end{equation} for any fixed $r\in(0,1)$, where $\Phi_t$ is the radial CDF of $\mu^{t}$.
\end{corollary}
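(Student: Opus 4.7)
The plan is to follow the proof of Theorem \ref{conjecture:general clt} essentially verbatim, as the authors suggest, replacing the slowly-varying tail analysis by an elementary compactness observation. The whole argument is carried out at the level of radial quantile functions via the identity \eqref{eq:A:CDF identity}, and then transferred back to CDFs using Lemma \ref{lemma:quntile convergence}.

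The only input needed from the hypothesis is that the right endpoint of $\supp(\mu)$ equals $1$, so that the left-continuous radial quantile function satisfies $\lim_{s \to 1^-}\Phi_0^{\langle-1\rangle}(s) = 1$. When $\alpha = 2$ the scaling factor $(1-t)^{2 - 2/\alpha}$ from Theorem \ref{conjecture:general clt} collapses to $1-t$ and no auxiliary slowly-varying normalization $g$ is required, which is why we rescale by exactly $1-t$ in \eqref{eq:A:limit CDF result}. Fix $r \in (0,1)$, set $\widetilde{\Phi}_t(y) := \Phi_t((1-t)y)$ with quantile function $\widetilde{\Phi}_t^{\langle-1\rangle}(x) = (1-t)^{-1}\Phi_t^{\langle-1\rangle}(x)$, and divide \eqref{eq:A:CDF identity} by $1-t$ to obtain
\[
\widetilde{\Phi}_t^{\langle-1\rangle}(x) \;=\; \frac{x\,\Phi_0^{\langle-1\rangle}\bigl((1-t)x + t\bigr)}{(1-t)x + t}.
\]
As $t \to 1^-$ the argument $(1-t)x + t \to 1^-$, so the numerator tends to $x$ and the denominator to $1$; hence $\widetilde{\Phi}_t^{\langle-1\rangle}(x) \to x$ pointwise on $(0,1)$.

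The limit function $x \mapsto x$ is the quantile function of the continuous CDF $\Phi(r) = r$ on $[0,1]$, so Lemma \ref{lemma:quntile convergence} upgrades this pointwise quantile convergence to pointwise CDF convergence at every continuity point of the limit, which is every $r \in (0,1)$. This is exactly $\Phi_t((1-t)r) = \widetilde{\Phi}_t(r) \to r$, establishing \eqref{eq:A:limit CDF result}. The whole argument is routine once the quantile identity from Theorem \ref{thm:A:connection general case} is in hand; the only step requiring care is the verification that the compact-support hypothesis implies $\Phi_0^{\langle-1\rangle}(s) \to 1$ as $s \to 1^-$, which is immediate from the standard characterization of the right endpoint of a compactly supported measure in terms of its CDF.
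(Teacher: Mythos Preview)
Your proof is correct and follows essentially the same approach as the paper's: both arguments specialize the quantile-function computation from the proof of Theorem \ref{conjecture:general clt} to the case $\alpha=2$, using the compact-support hypothesis to obtain $\Phi_0^{\langle-1\rangle}(s)\to 1$ as $s\to 1^-$ in place of the regular-variation step, and then invoke Lemma \ref{lemma:quntile convergence} to pass from quantile convergence to CDF convergence.
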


\subsection{Stable laws} \label{sec:stable}

In \cite{MR3896715}, the Brown measures that are stable under the $\rdplus$ operation are characterized. One way to establish this characterization is via the bijection $\mathcal{H}$, so it suffices to determine the symmetric probability measures that are stable under $\boxplus$, and then determine their $S$-transforms. 
This was done in \cite{MR2506464}, building upon the work of \cite{MR2128863}, where it is shown that the $S$-transform of any symmetric free stable distribution is of the form:
\[ \mathscr{S}_{\alpha}(z) = \theta e^{i(2-\alpha) \frac{ \pi}{2 \alpha}} z^{\frac{1}{\alpha}-1} = \theta i (-z)^{1/\alpha-1} \]
for $0 < \alpha \leq 2 $, and some constant $\theta > 0$. Additionally, they prove that for \editr{ $\widetilde \nu$, a symmetric probability measure on $\R$:
	\[\mathscr{S}_{\widetilde \nu}^2(z) = \frac{1+z}{z} \mathscr{S}_{\nu}(z) ,  \]
	where $\nu$ is the probability measure on $\R^+$ induced by the map $x \to x^2$, as given before \eqref{eq:oplus definition}.}
So we see that if $\widetilde \nu$ is a symmetric free stable distribution then the $S$-transform of $\nu$ is:
\begin{equation} \label{eq:stableS2} \mathscr{S}_{\nu}(z) = \theta \frac{ (-z)^{\frac{2}{\alpha} -1}}{1+z}\end{equation}
for some (possibly different) constant $\theta > 0$.  
This is exactly the expression in \eqref{eq:stableS}. We now give an alternative, more direct, proof of Proposition \ref{prop:stablelaw}, \editr{that the Brown measure of $a$ is $\alpha$-$\rdplus$ stable when the $S$-transform of $\mu_{aa^*}$ is of the form \eqref{eq:stableS2}}, without relying on the characterization of stable laws on $\R$.

\begin{proof}[Proof of Proposition \ref{prop:stablelaw}]
	We will show that the $S$-transforms in \editr{\eqref{eq:stableS}, which we remind the reader is just \eqref{eq:stableS2} with $\nu$ replaced by $aa^*$,} are exactly the class of functions that are \editr{invariant under applying $\pi$ and rescaling as in} \eqref{eq:Spiaa}. %So the measures that are stable under our convolution of the Brown measures of $R$-diagonal elements, are in correspondence with symmetric stable distributions. This relation is clear from , but we now show that it also follows immediately from our convolution formula.
	
	If $\mathscr{S}_{a a^*} (z) = \frac{(-z)^{\frac{2}{\alpha}-1}}{1+z}$ then a simple rescaling of the argument gives:
	\[  \mathscr{S}_{a a^*} (z) = \frac{\lambda^{-2/\alpha} \lambda (1+\lambda z)}{1+z} \mathscr{S}_{a a^*} (\lambda z) .\] 
	Additionally, these are exactly the class of functions that are invariant after  rescaling by $\lambda$ and then multiplying by $c \frac{ \lambda (1+\lambda z)}{1+z} $, for some constant $c$.
	But from \eqref{eq:Spiaa}, we have that the right-hand side is equal to the $S$-transform of $\lambda^{2/\alpha-2} \pi_{\lambda}(a)\pi_{\lambda}(a)$. In other words, the Brown measure is \editr{invariant under} the map $a \to \lambda^{1/\alpha-1} \pi_{\lambda^{}}(a)$, which, by Proposition \ref{prop:brownmeasuresum}, has the same law as
	\[ \frac{a_1 + \cdots + a_k  }{ k^{1/\alpha}  } \]
	when $\lambda = \editr{k^{-1}}$.
\end{proof}
The case $\alpha =2$ corresponds to the circular element, considered in Section \ref{sec:CLandTP}. More generally, in \cite{MR3896715}, it is shown that if $l = \frac{2}{\alpha}-1$ is an integer, then the Brown measure of $x_0 x_1^{-1} \cdots x_{l}^{-1}$, where the $x_i$'s are $\ast$-freely independent circular elements\editr{, is $\alpha$-$\rdplus$ stable}.	

\editr{In the remainder of this section, we use Proposition \ref{prop:R-diag product as polynomials} to directly relate the measures $\mu_\alpha$ appearing in Theorem \ref{conjecture:general clt} to random polynomials with independent coefficients. In principle, random polynomials with independent coefficients whose empirical root measures converge to $\mu_\alpha$ could be reverse engineered from the radial quantile function and Theorem \ref{thm:KZ}. We will instead use Proposition \ref{prop:R-diag product as polynomials} and already known results on $\alpha$-$\oplus$ stable laws to construct simple coefficients for these polynomials which transform quite nicely under differentiation.}

\editr{ For $l = \frac{2}{\alpha}-1$ an integer, we let $x_0,x_1,\dots,x_l$ be free circular elements. We have already seen in Section \ref{sec:CLandTP} that the random polynomial \begin{equation*}
		C_{n}(z)=\sum_{k=0}^{n}\frac{n^k}{k!}\xi_k z^k,
	\end{equation*} have limiting root measure $\sq\mu_{x_0}$. One can verify through the $S$-transform and and radial quantile function that the random polynomial \begin{equation*}
		D_{n}(z)=\sum_{k=0}^{n}\frac{k!}{n^k}\binom{n}{k}\xi_k z^k
	\end{equation*} has limiting root measure $\sq\mu_{x_1^{-1}}$. Applying Proposition \ref{prop:R-diag product as polynomials} we see that the random polynomial \begin{equation}\label{eq:stable polynomials}
		S_n(z)=\sum_{k=0}^{n}\left(\frac{k!}{n^k} \right)^{l-1}\binom{n}{k}^{l}\xi_k z^{k},
	\end{equation} has limiting root measure $\sq\mu_{x_0 x_1^{-1} \cdots x_{l}^{-1}}$, and hence is differentiation stable. Of course \eqref{eq:stable polynomials} makes sense for any $l\geq 0$. The stability of $S_n$ can also be seen directly by a straightforward computation \begin{align*}
		S_n'(z)&=\sum_{k=1}^{n}k\left(\frac{k!}{n^k} \right)^{l-1}\binom{n}{k}^{l}\xi_k z^{k-1}=\sum_{k=1}^{n}\left(\frac{k-1!}{(n-1)^{k-1}} \right)^{l-1}n\left(1-\frac{1}{n} \right)^{(k-1)(l-1)}\binom{n-1}{k-1}^{l}\xi_k z^{k-1}\\
		&\overset{d}{=}nS_{n-1}\left(\left(1-\frac{1}{n} \right)^{l-1}z \right).
\end{align*}}		

\subsection{Limits of elliptic polynomials and some questions of Feng and Yao}\label{sec:limit of elliptic polynomials}		
\editr{Theorem \ref{conjecture:general clt} considers root behavior as the ratio of derivatives to the degree, $t$, tends to $1$ \textit{after} the degree is sent to infinity. The limiting root measure when $t$ tends to $1$ simultaneously with the degree tending to infinity may not always converge to one of the measures in Theorem \ref{conjecture:general clt}. See the discussion after Theorem 5 in \cite{MR3921311}. In this section we consider some examples of this simultaneous limit and remark on some questions posed in \cite{MR3921311}. Specifically we consider coefficients \begin{equation}
		P_{k,n}=\binom{n}{k}^{w},
	\end{equation} for $w\geq 0$. Kabluchko and Zaporozhets \cite{MR3262481} refer these as \textbf{elliptic polynomials}.}  In \cite{MR3921311} Theorem 6, Feng and Yao computed the limiting root distribution for elliptic polynomials with \editr{$w=\frac{1}{2}$}, as the proportion of derivatives tends to one with the degree. The limit, after rescaling, in their work is the measure $\mu_{1/2}$ from Theorem \ref{conjecture:general clt}. 		
Feng and Yao \cite{MR3921311} additionally consider the limiting root distribution for derivatives of random Kac polynomials (introduced in Section \eqref{sec:Haar})\editr{, which can be viewed as $w=0$ elliptic polynomials,} as the proportion of derivatives tend to one as the degree tends to infinity. The limit, \editr{again} after rescaling, matches that of Corollary \ref{thm:A:Derivative CLT}. Without rescaling, \editr{the limiting empirical root measure of both models} would be a point mass at $0$. Feng and Yao pose the following questions for a general random polynomial as defined in \eqref{eq:pngen}:\begin{enumerate}[(1)]
	\item If $N_n=n-D_n$ where $D_n=o(n)$ and $D_n\rightarrow\infty$ with $n$, then when is $\delta_0$ the limiting root distribution of $P_n^{(\lfloor N_n\rfloor)}$?
	
	\item If the limiting root distribution of $P_n^{(\lfloor N_n\rfloor)}$ is $\delta_0$, does there exist a rescaling of the roots giving a non-degenerate limit? If so, what is the scaling?
\end{enumerate} Theorem \ref{conjecture:general clt} suggests that the answer to both questions should depend largely on the tail of the limiting root measure. However, to fully answer both questions some additional regularity on the coefficients would need to be considered (\editr{again} see the discussion following Theorem 5 in \cite{MR3921311}). In the following proposition we partially answer these questions. 

%			In doing so we provide a family of random polynomials with relatively simple coefficients which after taking, for example, $N_n=n-\lfloor\log n\rfloor$ derivatives have limiting empirical root measures stable under the differentiation flow. 

\begin{proposition}\label{prop:A:limit with n for elliptic}
	For $\alpha\in(0,2]$, let $P_{n}$ be the general random polynomial defined in \eqref{eq:pngen} such that $\xi_1,\xi_2,\dots$  satisfy \eqref{eq:xik} and \begin{equation}
		P_{k,n}=\binom{n}{k}^{\frac{2}{\alpha}-1}.
	\end{equation} Let $N_n=n-D_n$ where $D_n$ is such that $D_n=o(n)$ and $D_n\rightarrow\infty$ as $n\rightarrow\infty$, and let $R_n=\left(\frac{n}{D_n} \right)^{2-\frac{2}{\alpha}}$. Let $\widetilde{P}_n$ be the random polynomial defined by $\widetilde{P}_n(z)=P_n(z/R_n)$. Then the empirical root measure of $\widetilde{P}_n^{(\lfloor N_n\rfloor)}$ converges in probability as $n \to \infty$ to the probability measure $\mu_{\alpha}$ defined in Theorem \ref{conjecture:general clt}.
	
	Moreover, if 	\[ \mu_{D_n} = \frac{1}{D_n}\sum_{z \in \mathbb{C} : P_n^{(\lfloor N_n\rfloor)}(z) = 0} \delta_{z}, \] then weakly, in probability,\begin{equation}
		\lim\limits_{n\rightarrow \infty} \mu_{D_n}=\begin{cases}
			0,\ 0<\alpha<1\\
			\mu_1,\ \alpha=1\\
			\delta_0,\ 1<\alpha\leq 2						
		\end{cases},
	\end{equation} where $\delta_0$ is a point mass at the origin, $0$ is the zero measure, and $\mu_1$ is defined in Theorem \ref{conjecture:general clt}.
\end{proposition}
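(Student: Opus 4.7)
The plan is to reduce the proposition to an application of Theorem \ref{thm:KZ} for random polynomials of degree $D_n$. Setting $\gamma := 2/\alpha - 1$, differentiating the elliptic polynomial and using $\binom{n}{j+N_n} = \binom{n}{D_n - j}$, I would first write
\begin{equation*}
P_n^{(N_n)}(z) = N_n! \sum_{j=0}^{D_n} \binom{j + N_n}{j} \binom{n}{D_n - j}^{\gamma} \xi_{j+N_n} z^j.
\end{equation*}
The zeros of $\widetilde{P}_n^{(N_n)}(z) = R_n^{-N_n} P_n^{(N_n)}(z/R_n)$ therefore coincide (after discarding the overall $R_n^{-N_n} N_n!$ prefactor) with those of a random polynomial of degree $D_n$ whose iid coefficients are $\xi_{j+N_n}$ and whose deterministic profile is $c_{j,n} := \binom{j + N_n}{j} \binom{n}{D_n - j}^{\gamma} R_n^{-j}$. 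The $j$-independent factor $(n/D_n)^{\gamma D_n}$ may then be divided out without affecting roots, and the remaining task is to verify Assumption \ref{assump:a1} for the rescaled profile with degree parameter $D_n$.

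Next I would use Stirling's approximation together with $D_n = o(n)$ to derive the estimates $\binom{j+N_n}{j} = (1+o(1)) n^j/j!$ and $\binom{n}{D_n - j} = (1+o(1)) n^{D_n - j}/(D_n - j)!$, uniformly in $j$. Straightforward algebra then yields, for $j = \lfloor x D_n \rfloor$ with $x \in [0,1]$,
\begin{equation*}
\frac{1}{D_n}\log\frac{c_{j,n}}{(n/D_n)^{\gamma D_n}} \longrightarrow \log P(x) := x(1 - \log x) + \gamma(1-x)\bigl(1 - \log(1-x)\bigr),
\end{equation*}
where $P$ extends continuously and strictly positively to $[0,1]$ via $y^y \to 1$ as $y \to 0^+$. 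Setting $P \equiv 0$ beyond $1$ makes Assumption \ref{assump:a1} hold. Theorem \ref{thm:KZ} then gives convergence of $\widetilde{\mu}_{D_n}$ (the empirical root measure of $\widetilde{P}_n^{(N_n)}$) in probability to the rotationally invariant measure with radial CDF $\widetilde{\Phi}(r) = I'(\log r)$, where $I$ is the Legendre--Fenchel conjugate of $u := -\log P$. Since $u'(t) = \log\bigl(t/(1-t)^\gamma\bigr)$, inverting yields $\widetilde{\Phi}^{\langle -1 \rangle}(x) = x/(1-x)^\gamma = \Phi_{0,\alpha}^{\langle -1 \rangle}(x)$, identifying the limit as $\mu_\alpha$.

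For the second assertion I would observe that the roots of $P_n^{(N_n)}$ are those of $\widetilde{P}_n^{(N_n)}$ divided by $R_n$, so $\mu_{D_n}(\D_r) = \widetilde{\mu}_{D_n}(\D_{R_n r})$ for every $r > 0$. Since $R_n = (n/D_n)^{1-\gamma}$ and $n/D_n \to \infty$, the three regimes are: (i) $R_n \to 0$ when $\alpha < 1$, so $\widetilde{\mu}_{D_n}(\D_{R_n r}) \to \Phi_{0,\alpha}(0) = 0$ for every $r$ and mass escapes to infinity, yielding the zero vague limit; (ii) $R_n = 1$ when $\alpha = 1$, so $\mu_{D_n} \to \mu_1$ directly; (iii) $R_n \to \infty$ when $\alpha > 1$, so $\widetilde{\mu}_{D_n}(\D_{R_n r}) \to 1$ for every $r > 0$ and all mass concentrates at the origin, giving $\delta_0$.

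The main obstacle I anticipate is ensuring that the convergence $|c_{j,n}/(n/D_n)^{\gamma D_n}|^{1/D_n} \to P(j/D_n)$ is uniform over $0 \leq j \leq D_n$, since the naive Stirling estimates degrade near the endpoints $j = 0$ and $j = D_n$ where the usual $k\log k$ term vanishes. One needs sharper (but still elementary) bounds on the error terms showing that they remain controlled as $j/D_n \to 0$ or $1$; once this uniform statement is in hand, Theorem \ref{thm:KZ} applies without further difficulty and the rest of the argument is computational.
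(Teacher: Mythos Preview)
Your proposal is correct and follows essentially the same approach as the paper: both compute the exponential profile of the coefficients of the differentiated, rescaled polynomial, verify Assumption~\ref{assump:a1} at scale $D_n$, and then invoke the Kabluchko--Zaporozhets machinery (the paper phrases this via Theorem~\ref{thm:feng-yao} and a reference to Feng--Yao's Theorem~6, while you go directly to Theorem~\ref{thm:KZ}). Your limiting profile $\log P(x)=x(1-\log x)+\gamma(1-x)(1-\log(1-x))$ agrees with the paper's $\log P_\beta$ up to an additive constant, which is irrelevant for the roots; your explicit treatment of the ``Moreover'' trichotomy via the scaling $R_n=(n/D_n)^{1-\gamma}$ is also in line with (and more detailed than) what the paper leaves implicit. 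One small caveat: the multiplicative estimate $\binom{j+N_n}{j}=(1+o(1))n^j/j!$ need not hold uniformly in $j$ when $D_n^2/n\not\to 0$, but since you only use it at the level of $\frac{1}{D_n}\log(\cdot)$ the weaker additive $o(1)$ statement suffices, and this is exactly the uniformity issue you already flagged.
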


Letting $t=\frac{N_n}{n}$, and noting $\frac{n}{D_n}=(1-t)^{-1}$ we see that the scaling in Proposition \ref{prop:A:limit with n for elliptic} matches that of Theorem \ref{conjecture:general clt}. We expect this phenomenon holds in more generality, i.e., under sufficient regularity conditions on the coefficients of $P_n$ the scaling should depend only on the tail of the limiting root measure and the limiting empirical measure for the rescaled roots should be $\mu_\alpha$.
\begin{proof}[Proof of Proposition \ref{prop:A:limit with n for elliptic}]
	To simplify notation, let $\beta=\frac{2}{\alpha}-1$. Proposition \ref{prop:A:limit with n for elliptic} is a straightforward generalization of Theorem 6 in \cite{MR3921311}. We sketch the necessary changes. The $\left(\frac{n}{D_n}\right)^{-\beta}$ term in the rescaling is used to control the coefficients $P_{k,n}$, $N_n\leq k\leq n$ of $P_n$, while the additional $\left(\frac{n}{D_n}\right)$ term is used to control how the coefficients evolve under differentiation. We can then conclude that if $\widetilde{P}_{k,n}$ are the coefficients of $\widetilde{P}_n$, then\begin{equation}\label{eq:A:log limit}
		\lim\limits_{n\rightarrow \infty}\sup_{0 \leq k \leq D_n}\left|\frac{1}{D_n}\log \widetilde{P}_{k,n}-\log \widetilde{P}_\beta\left(\frac{k}{D_n}\right) \right|=0, 
	\end{equation} where \begin{equation}\label{eq:A:limiting coefficient function}
		\log P_\beta(x)=-x\log x-\beta(1-x)\log(1-x)+(1-\beta)x+\beta-1
	\end{equation} for $0\leq x\leq 1$ and $\log P_\beta(x)=-\infty$ for $x>1$. The conclusion then follows from \eqref{eq:A:log limit}, \eqref{eq:A:limiting coefficient function}, Theorem \ref{thm:feng-yao} and the observation in \cite{MR3262481} that the limiting empirical root measure is given by the push-forward of the  Lebesgue measure under the map $x\mapsto \exp\left(-\frac{d}{d\editr{x}}\log P_\beta(x) \right)$.
\end{proof}

%			
%\section{Dynamics of \editr{limiting root measures under repeated differentiation}}\label{sec:repderiv}
%
%			In this section, we consider the dynamics on probability measures described by the differentiation flow in \eqref{eq:A:CDF identity}; see Remark \ref{rmk:A:connecting diff flow to feng-yao thm} for a description of the connection to differentiation of polynomials. Section \ref{sec:limit_roots} considers the $t\rightarrow1$ limit of the measures under the differentiation flow. Section \ref{sec:PDE} focuses on PDEs describing the dynamics of the limiting radial probability density functions and radial cumulative distribution functions. 
%			
%			

\section{PDEs describing the limiting behavior of the roots} \label{sec:PDE}
In this section, we focus on PDEs describing the dynamics of the limiting radial probability density functions and radial cumulative distribution functions under repeated differentiation.  

Given the initial radial density $\psi(x,0)$ of the zeros at $t = 0$, the PDE in \eqref{eq:OS} describes the radial density $\psi(x, t)$ at time $0 \leq t < 1$.  
As shown in \cite{MR4242313}, there is a constant loss of mass for the solution: 
\[ \frac{d}{dt} \int_0^\infty \psi(x,t) \d x = -1. \]
In other words, if $\psi(x, 0)$ is the \edits{radial part} of a probability density function (PDF), then $\psi(x, t)$ has mass $1 - t$.  One can renormalize so that $\psi(x,t)$ has total mass $1$, but this new function will not satisfy \eqref{eq:OS}.  In this section, we informally derive new PDEs for this PDF and its corresponding CDF.  We will also derive a PDE for the Brown measure.  Although the derivations are informal, the purposes of this section is to show how our results and examples from the previous sections are consistent with the PDE approach \cite{MR4011508,MR4242313} to studying repeated differentiation of random polynomials.  
%Along the way, we will describe how several of the results and examples from the previous sections satisfy these PDEs.    

\subsection{Derivation of PDE for the PDF and CDF} \label{subsec:PDFCDF}
We define the PDF as 
\[ \varphi(x,t) := \frac{ \psi(x,t) }{1 - t}, \qquad x \geq 0, \quad 0 \leq t < 1, \]
where $\psi(x,t)$ is a solution to \eqref{eq:OS}.  
Recall that we use the convention that $x \geq 0$ either denotes $x \in [0, C]$ (for some finite positive constant $C$) or $x \in [0, \infty)$, depending on whether the density is compactly supported or not. 
The function $\varphi(x,t)$ will then satisfy the equation 
\begin{equation} \label{eq:OS:PDF}
	{ (1 - t) \frac{ \partial \varphi(x,t) }{\partial t} = \frac{ \partial}{\partial x} \left( \frac{ \varphi(x,t) }{ \frac{1}{x} \int_0^x \varphi(y,t) dy } \right) + \varphi(x,t), \qquad x \geq 0, \quad 0 \leq t < 1. } 
\end{equation} 
Indeed, from \eqref{eq:OS}, we derive
\begin{align*}
	\frac{\partial \varphi(x,t)}{\partial t} &= \frac{1}{1 - t} \frac{ \partial \psi(x,t) }{\partial t} + \frac{1}{(1 - t)^{2}} \psi(x,t) \\
	&= \frac{1}{1 - t} \frac{ \partial}{\partial x} \left( \frac{ \psi(x,t) }{ \frac{1}{x} \int_0^x \psi(y,t) dy } \right) + \frac{1}{1 -t} \varphi(x,t) \\
	&= \frac{1}{1 - t} \frac{ \partial}{\partial x} \left( \frac{ \varphi(x,t) }{ \frac{1}{x} \int_0^x \varphi(y,t) dy } \right) + \frac{1}{1 -t} \varphi(x,t).  
\end{align*}
Thus, by rearranging, we obtain \eqref{eq:OS:PDF}.  

It is easy to check that if $\int_0^\infty \varphi(x,0) \d x = 1$, then the solution to \eqref{eq:OS:PDF}  satisfies $\int_0^\infty \varphi(x,t) \d x = 1$ for all $0 \leq t < 1$.  In fact, from \eqref{eq:OS:PDF}, we have
\begin{align*}
	(1 - t) \frac{ \partial}{\partial t} \int_0^\infty \varphi(x,t) \d x &= \int_0^\infty (1-t) \frac{ \partial \varphi(x,t)}{\partial t} \d x \\
	&= \int_0^\infty  \frac{ \partial}{\partial x} \left( \frac{ \varphi(x,t) }{ \frac{1}{x} \int_0^x \varphi(y,t) dy } \right) \d x + \int_0^\infty \varphi(x,t) \d x \\
	&= -\lim_{\eps \to 0} \frac{ \varphi(\eps, t)}{\frac{1}{\eps} \int_0^\eps \varphi(y,t) \d y } + \int_0^\infty \varphi(x,t) \d x \\
	&= -1 + \int_0^\infty \varphi(x,t) \d x,
\end{align*}
where we used the regularity of the solution, and we assumed $x \mapsto \varphi(x, 0)$ is compactly supported, which by the Gauss--Lucas theorem hints that the support of $x \mapsto \varphi(x, t)$ is contained in the support of $x \mapsto \varphi(x, 0)$ for all $0 \leq t < 1$.  Thus, if $y(t) = \int_0^\infty \varphi(x,t) \d x$, we obtain the linear ODE:
\[ (1 - t) y' = y -1, \]
which admits the solution
\[ y(t) = \frac{C}{1 - t} + \frac{t}{t-1} \]
for a constant $C$ depending on the initial value.  In fact, if $y(0) = 1$, then $C = 1$, and we find the constant solution $y(t) = 1$ for all $0 \leq t < 1$, as desired.  
%Interestingly, for other initial values, $y(t)$ does not admit a constant solution.  

Define the cumulative distribution function of the solution $\varphi(x,t)$ of \eqref{eq:OS:PDF} as 
\[ \Phi(x,t) = \int_0^x \varphi(y,t) dy. \]
Then, using \eqref{eq:OS:PDF}, we obtain
\begin{align*}
	(1 - t) \frac{\partial \Phi(x,t) }{\partial t} &= \int_0^x (1 - t) \frac{\partial \varphi(y,t) }{\partial t} dy \\ 
	&= \int_0^x \frac{ \partial }{\partial y} \left( \frac{ \varphi(y, t) }{ \frac{1}{y} \int_0^y \varphi(z, t) dz } \right) dy + \int_0^x \varphi(y, t) dy \\
	&= \int_0^x \frac{ \partial }{\partial y} \left( y \frac{ \frac{ \partial \Phi(y,t)}{\partial y} }{ \int_0^y \varphi(z, t) dz } \right) dy + \Phi(x,t) \\
	&= \frac{x \frac{ \partial \Phi(x,t) }{\partial x} }{\Phi(x,t) } - 1 + \Phi(x,t).
\end{align*} 
We conclude that $\Phi(x,t)$ satisfies the following PDE: 
\begin{equation} \label{eq:OS:CDF}
	{ (1 - t) \frac{\partial \Phi(x,t) }{\partial t} = \frac{x \frac{ \partial \Phi(x,t) }{\partial x} }{\Phi(x,t) } - 1 + \Phi(x,t), \qquad x \geq 0, \quad 0 \leq t < 1. } 
\end{equation}
\edits{Equation \eqref{eq:OS:CDF} is similar to the PDE derived in Section 2.3 of \cite{doi:10.1080/10586458.2021.1980752}, where the CDF is not normalized to have total mass $1$. }
Now that we have a PDE for the CDF, we can compare \eqref{eq:OS:CDF} to the examples in the previous sections.  For instance, it is easy to check that $\Phi(x,t) = \Phi_{0, 1}(x) = \frac{x}{1+x}$ from Section \ref{sec:limit_roots} satisfies \eqref{eq:OS:CDF}.  

\subsection{Rescaling the $x$ coordinate}
If $\Phi(x, 0)$ at $t = 0$ is supported on $x \in [0, 1]$, we expect that $\Phi(x, t)$ is supported on $x \in [0, 1 - t]$ for $0 \leq t < 1$.  If we define $\widetilde \Phi(x,t) = \Phi((1-t)x, t)$, then $\widetilde \Phi(x, t)$ is supported on $x \in [0, 1]$ for all $0 \leq t < 1$ but will no longer satisfy \eqref{eq:OS:CDF}.  One can easily derive the PDE that $\widetilde \Phi(x, t)$ does satisfy using \eqref{eq:OS:CDF}. Indeed, by the chain rule, we have
\begin{align*}
	\frac{ \partial \widetilde \Phi(x,t)}{\partial t} &= -x \frac{ \partial \Phi( (1-t)x,t)}{\partial x} + \frac{\partial \Phi((1-t)x,t)}{\partial t}, \\
	\frac{ \widetilde \Phi (x,t)}{\partial x} &= (1-t) \frac{\partial \Phi((1-t)x, t) }{\partial x}. 
\end{align*}
In particular, this implies that
\[ (1 -t) \frac{ \partial \widetilde \Phi(x,t)}{\partial t} = -x \frac{\partial \widetilde \Phi(x,t)}{\partial x} + (1 - t) \frac{\partial \Phi((1-t)x,t)}{\partial t}. \]
Thus, from \eqref{eq:OS:CDF}, we obtain the following PDE for $\widetilde \Phi (x,t)$:
\begin{equation} \label{eq:OS:CDF:rescale}
	{(1 - t) \frac{ \widetilde \Phi (x,t) }{\partial t} = -x \frac{\partial \widetilde \Phi(x,t)}{\partial x} + \frac{ x \frac{\partial \widetilde \Phi(x,t)}{\partial x}}{\widetilde \Phi(x,t)} - 1 + \widetilde \Phi(x,t), \qquad x \geq 0, \quad 0 \leq t < 1. }
\end{equation}
One can now take the $t \to 1^-$ limit in \eqref{eq:OS:CDF:rescale}.  Indeed, if $\widetilde \Phi(x) = \lim_{t \to 1^-} \widetilde \Phi(x,t)$ and  $\lim_{t \to 1^-} \frac{\partial \widetilde \Phi(x,t)}{\partial x} =   \widetilde \Phi'(x)$, then one arrives at the following ODE for $\widetilde \Phi(x)$:
\begin{equation} \label{eq:OS:tode}
	x \widetilde \Phi'(x)= \frac{ x \widetilde \Phi'(x)}{\widetilde \Phi(x)} - 1 + \widetilde \Phi(x), \qquad x \geq 0. 
\end{equation}
It is straightforward to check that the example $\widetilde \Phi(x) = x$ from Theorem \ref{thm:A:Derivative CLT} solves \eqref{eq:OS:tode}.

In a similar fashion, using \eqref{eq:OS:PDF}, one can also derive a PDE for the rescaled PDF $\widetilde \varphi(x, t) = (1-t) \varphi((1-t)x, t)$ and take the limit $t \to 1^-$.  

\subsection{CDF for the Brown measure}
Using the connection between random polynomials and the Brown measure discussed in Section \ref{sec:connection}, we can similarly derive PDEs for the radial parts of the PDF and CDF for the Brown measure.  
Let
\[ F(x,t) = \Phi(x^2, t), \qquad x \geq 0, \quad 0 \leq t < 1 \]
be the CDF of the radial part of the Brown measure, where $\Phi(x, t)$ is the \edits{radial part of the CDF,} defined in Section \ref{subsec:PDFCDF} above.  Using \eqref{eq:OS:CDF}, we find
\begin{align*}
	(1 - t) \frac{\partial F(x,t)}{\partial t} &= (1 - t) \frac{ \partial \Phi(x^2, t)}{\partial t} \\
	&= \frac{x^2 \frac{ \partial \Phi(x^2, t)}{\partial x} }{ \Phi(x^2, t) } - 1 + \Phi(x^2, t).
\end{align*}
Thus, since 
\[ \frac{\partial F(x,t)}{\partial x} = 2x \frac{\partial \Phi(x^2, t)}{\partial x}, \] 
we conclude with the following PDE for $F(x, t)$: 
\[ { (1 - t) \frac{\partial F(x,t)}{\partial t} = \frac{ x \frac{ \partial F(x, t) }{ \partial x}}{ 2 F(x,t) } - 1 + F(x,t), \qquad x \geq 0, \quad 0 \leq t < 1. } \]

Similarly, using \eqref{eq:OS:PDF}, one can also derive a PDE for the PDF of the radial part of the Brown measure given by $f(x,t) = 2x \varphi(x^2, t)$; we omit the details.

\section{Funding}

This work was supported by the National Science Foundation [Grant No. DMS-2143142 to S.O.]; and the European Research Council [Grant No. 101020331].			
%This work was supported by National Science Foundation under [Grant No. DMS-2143142 to S.O.].		

\section*{Acknowledgements}
The third author acknowledges the support of the University of Colorado Boulder, where a portion of this work was completed.  The authors thank Martin Auer, Vadim Gorin, Brian Hall, and Noah Williams for comments, corrections, and references.  The authors also wish to thank the anonymous referees for useful feedback and corrections.

\appendix

\section{Quantile functions} \label{sec:quantile}
In this section, we review some basic facts used throughout the paper concerning quantile functions of real-valued random variables. 

\begin{definition}
	Let $F$ be the CDF of a real valued random variable. The quantile function $Q:[0,1)\rightarrow\R$ of $F$ is the function defined by \begin{equation}\label{eq:Quantile def}
		Q(p):=\inf\{x\in \R: F(x)\geq p\}. 
	\end{equation} 
\end{definition}

The following lemma contains some essential results on quantile functions. 

\begin{lemma}\label{lemma:quantile facts}
	Let $F$ be a CDF with quantile function $Q$. \begin{enumerate}[(1)]
		\item\label{eq:inequality iff} For every $x\in\R$ and $p\in[0,1)$, $F(x)\geq p$ if and only if $Q(p)\leq x$.
		
		\item $Q$ is left-continuous and non-decreasing.
		
		\item If $F$ is invertible, then $Q=F^{-1}$.
	\end{enumerate} Moreover, the quantile function uniquely determines $F$ and any left-continuous non-decreasing function on $[0,1)$ is the quantile function of a unique distribution.
\end{lemma}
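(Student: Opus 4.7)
The plan is to treat the four assertions in sequence, since each builds naturally on the previous ones.

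First I would establish \ref{eq:inequality iff}, which is really the engine that drives the rest. The $(\Rightarrow)$ direction is immediate from the definition \eqref{eq:Quantile def}: if $F(x) \geq p$ then $x$ lies in the set whose infimum is $Q(p)$, so $Q(p) \leq x$. For the $(\Leftarrow)$ direction I will use right-continuity of $F$ (as a CDF) to show that the infimum in \eqref{eq:Quantile def} is attained, so that $F(Q(p)) \geq p$; then monotonicity of $F$ combined with $x \geq Q(p)$ gives $F(x) \geq p$.

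Next, monotonicity of $Q$ is a direct set-theoretic observation: if $p_1 \leq p_2$, then $\{x : F(x) \geq p_2\} \subseteq \{x : F(x) \geq p_1\}$, so infima compare the right way. For left-continuity, I would fix $p \in (0,1)$ and a sequence $p_n \uparrow p$. Monotonicity forces $L := \lim_n Q(p_n) \leq Q(p)$. For the reverse inequality I apply \ref{eq:inequality iff}: for each $n$, $Q(p_n) \leq L$ gives $F(L) \geq p_n$; passing to the limit in $n$ yields $F(L) \geq p$, and another application of \ref{eq:inequality iff} gives $Q(p) \leq L$. Claim \ref{eq:inequality iff} shows assertion (3) as well: if $F$ is (continuous and strictly increasing and hence) invertible as a bijection onto its range, then $\{x : F(x) \geq p\}$ is a half-line $[F^{-1}(p), \infty)$, whose infimum is $F^{-1}(p)$.

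For the uniqueness part of the ``moreover'' claim, \ref{eq:inequality iff} rewrites $F$ in terms of $Q$ as $F(x) = \sup\{p \in [0,1) : Q(p) \leq x\}$, so $Q$ determines $F$. Conversely, given a left-continuous non-decreasing $G : [0,1) \to \R$, I would define $F(x) := \sup\{p \in [0,1) : G(p) \leq x\}$ (with the convention $\sup \emptyset = 0$), and verify that $F$ is non-decreasing, right-continuous, takes values in $[0,1]$, and has the correct limits at $\pm\infty$ (using that $G$ is non-decreasing and left-continuous), and that its quantile function is $G$. The main nuisance will be checking the boundary behavior of $F$ at $\pm\infty$ carefully from the behavior of $G$ at $0^+$ and $1^-$; this is the only step that requires more than a one-line argument, and it is where left-continuity of $G$ is really used to ensure $F$'s quantile function recovers $G$ exactly rather than some right-continuous modification.
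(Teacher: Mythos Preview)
Your proposal is correct and follows essentially the same approach as the paper: the paper cites van der Vaart for (1)--(3) where you give direct arguments, and for the ``moreover'' clause both you and the paper construct $F$ from a given left-continuous non-decreasing $G$ via $F(x)=\sup\{p\in[0,1):G(p)\le x\}$ (with $\sup\emptyset=0$), verify it is a CDF, and deduce uniqueness from assertion \ref{eq:inequality iff}. The only cosmetic difference is that the paper phrases uniqueness as a contradiction argument while you extract the explicit formula $F(x)=\sup\{p:Q(p)\le x\}$ directly; these are equivalent.
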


\begin{proof}
	See \cite{vaart_1998} Lemma 21.1 for a proof of the first three statements. For the final statements, let $Q$ be a left-continuous non-decreasing function on $[0,1)$. Define the function $F:\R\rightarrow[0,1]$ by \begin{equation}\label{eq:quantile to cdf def}
		F(x)=\max\left(\sup\{p\in[0,1): Q(p)\leq x\},0\right),
	\end{equation} with the convention that $\sup\emptyset=-\infty$. It is straightforward to check $F$ is non-decreasing, \begin{equation*}
		\lim_{x\rightarrow-\infty} F(x)=0,\quad\text{and}\quad \lim\limits_{x\rightarrow\infty}F(x)=1.
	\end{equation*} Fix $x\in\R$, and assume for the sake of contradiction that $F$ is not right-continuous at $x$. Then there exists $\delta>0$ such that for any $\eps>0$,  $F(x+\eps)-F(x)>\delta$. Let $\eps>0$, then \begin{equation}\label{eq:F eps inequality}
		F(x+\eps)> F(x)+\delta.
	\end{equation} From \eqref{eq:quantile to cdf def}, \eqref{eq:F eps inequality} and the monotonicity of $Q$ we have that\begin{equation}
		Q(F(x)+\delta)\leq x+\eps.
	\end{equation} As $\eps>0$ was arbitrary, we have that \begin{equation*}
		Q(F(x)+\delta)\leq x,
	\end{equation*} a contradiction of \eqref{eq:quantile to cdf def}. Thus, $F$ defined by \eqref{eq:quantile to cdf def} is right-continuous on $\R$. 
	
	Let $F_1$ and $F_2$ both be the CDF of distinct distributions both with quantile function $Q$. Let $x\in\R$ be such that $F_1(x)>F_2(x)$. Let $p\in(F_2(x),F_1(x))$, then from Lemma \ref{lemma:quantile facts} \ref{eq:inequality iff} \begin{equation*}
		F_1(x)\geq p\, \Leftrightarrow Q(p)\leq x\, \Leftrightarrow F_2(x)\geq p,
	\end{equation*} a contradiction. Hence, $F$ defined by \eqref{eq:quantile to cdf def} is unique.
\end{proof}

The following lemma describes convergence in distribution in terms of quantile functions. 

\begin{lemma}[See van der Vaart \cite{vaart_1998} Lemma 21.2]\label{lemma:quntile convergence}
	Let $X_1,X_2,\dots,$ and $X_\infty$ be real valued random variables with quantile functions $Q_1, Q_2, \dots,$ and $Q_\infty$ respectively. Then $X_n$ converges in distribution to $X_\infty$ if and only if $Q_n(p)$ converges to $Q_\infty(p)$ for every continuity point $p\in[0,1)$ of $Q_\infty$. 
\end{lemma}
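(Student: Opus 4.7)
The plan is to use the duality $Q(p) \leq x \Leftrightarrow F(x) \geq p$ established in part \ref{eq:inequality iff} of Lemma \ref{lemma:quantile facts} as the translation device between statements about CDFs and statements about quantile functions. The second ingredient is the density of continuity points for monotone functions: both $F_\infty$ and $Q_\infty$ have at most countably many discontinuities, so continuity points of either are dense in their respective domains, and I will repeatedly choose approximating continuity points to promote the non-strict inequalities provided by the duality to strict ones that survive under limits.

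For the forward implication, suppose $F_n(x) \to F_\infty(x)$ at every continuity point of $F_\infty$, and let $p \in [0,1)$ be a continuity point of $Q_\infty$. Given $\eps > 0$, I would pick continuity points $x_1, x_2$ of $F_\infty$ with $Q_\infty(p) - \eps < x_1 < Q_\infty(p) < x_2 < Q_\infty(p) + \eps$. The inequality $x_1 < Q_\infty(p)$ yields $F_\infty(x_1) < p$ directly from the definition of $Q_\infty$. For the upper side, continuity of $Q_\infty$ at $p$ lets me choose $p' > p$ with $Q_\infty(p') < Q_\infty(p) + \eps$, and then take $x_2 \in (Q_\infty(p'), Q_\infty(p)+\eps)$ to be a continuity point of $F_\infty$; this forces $F_\infty(x_2) \geq p' > p$. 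Both strict inequalities transfer to $F_n$ for all sufficiently large $n$, and by the duality this yields $x_1 < Q_n(p) \leq x_2$, proving $Q_n(p) \to Q_\infty(p)$.

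For the reverse implication, let $x$ be a continuity point of $F_\infty$ and set $p = F_\infty(x)$. I would bound $\limsup F_n(x)$ and $\liminf F_n(x)$ separately against $p$. If $\limsup F_n(x) > p$, I would select a continuity point $p''$ of $Q_\infty$ with $p < p'' < \limsup F_n(x)$; then $F_n(x) \geq p''$ along a subsequence, i.e., $Q_n(p'') \leq x$ along that subsequence, and passing to the limit gives $Q_\infty(p'') \leq x$, hence $F_\infty(x) \geq p'' > p$, a contradiction. For the liminf, if $\liminf F_n(x) < p$, I would pick a continuity point $p'$ of $Q_\infty$ with $\liminf F_n(x) < p' < p$. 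Then $F_n(x) < p'$ along a subsequence forces $Q_n(p') > x$ along it, so $Q_\infty(p') \geq x$. By the definition of $Q_\infty$, this means $F_\infty(y) < p'$ for every $y < x$, so $\lim_{y \uparrow x} F_\infty(y) \leq p'$. Here I would invoke continuity of $F_\infty$ at $x$ to identify this left limit with $F_\infty(x) = p$, yielding $p \leq p' < p$, a contradiction.

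The only genuinely delicate point is that both directions require promoting non-strict inequalities (which is all the duality provides) to strict ones that survive under limits, and this is precisely where the density of continuity points and the continuity hypothesis at the target point are used. Without continuity of $Q_\infty$ at $p$ (or of $F_\infty$ at $x$), one could get stuck on a jump and the sandwich arguments above would collapse; everything else reduces to bookkeeping with the duality relation.
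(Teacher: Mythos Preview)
The paper does not supply its own proof of this lemma; it merely cites van der Vaart \cite{vaart_1998}, Lemma 21.2, so there is no in-paper argument to compare against. Your proof is correct and is essentially the standard textbook argument (indeed the one in van der Vaart): both directions rest on the duality $Q(p)\le x \Leftrightarrow F(x)\ge p$ from Lemma \ref{lemma:quantile facts}, together with the density of continuity points of a monotone function to manufacture strict inequalities that pass to the limit.
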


\bibliography{fractional_convolution}
\bibliographystyle{abbrv}						
						
					\end{document}